\DeclareSymbolFont{cyrletters}{OT2}{wncyr}{m}{n}
\DeclareMathSymbol{\Sha}{\mathalpha}{cyrletters}{"58}
\newcommand{\ba}{\begin{align*}}
\newcommand{\ea}{\end{align*}}
\newcommand{\C}{\ensuremath{{\mathbb{C}}}}
\newcommand{\Z}{\ensuremath{{\mathbb{Z}}}\xspace}
\newcommand{\Q}{\ensuremath{{\mathbb{Q}}}}
\newcommand{\R}{\ensuremath{{\mathbb{R}}}}
\newcommand{\F}{\ensuremath{{\mathbb{F}}}}
\newcommand{\E}{\ensuremath{{\mathbb{E}}}}
\newcommand{\ra}{\rightarrow}
\newcommand\Hom{\operatorname{Hom}}
\newcommand\coker{\operatorname{coker}}
\newcommand\Aut{\operatorname{Aut}}
\newcommand\Gal{\operatorname{Gal}}
\newcommand\Nm{\operatorname{Nm}}
\newcommand\Sym{\operatorname{Sym}}
\newcommand\Sur{\operatorname{Sur}}
\newcommand\Ind{\operatorname{Ind}}
\newcommand\tensor{\otimes}
\newcommand\isom{\simeq}
\newcommand\sub{\subset}
\newcommand\Disc{\operatorname{Disc}}
\newcommand\GL{\operatorname{GL}}
\newcommand\cok{\operatorname{cok}}
\newcommand\End{\operatorname{End}}
\renewcommand\O{\mathcal{O}}
\newcommand\Qf{\ensuremath{Q_f}\xspace}
\newcommand\bq{\begin{equation}}
\newcommand\eq{\end{equation}}
\numberwithin{equation}{section}
\newtheorem{proposition}[equation]{Proposition}
\newtheorem{theorem}[equation]{Theorem}
\newtheorem{corollary}[equation]{Corollary}
\newtheorem{lemma}[equation]{Lemma}
\newtheorem{conjecture}[equation]{Conjecture}
\theoremstyle{remark}
\newtheorem{remark}[equation]{Remark}
\newtheorem{definition}[equation]{Definition}
\newcommand\Cl{{\operatorname{Cl}}}
\newcommand\SSur{{\operatorname{Sur}}}
\newcommand\rk{{\operatorname{rk}}}
\newcommand\Sp{{\operatorname{Sp}}}
\newcommand\trye{{e}}
\newcommand\tryf{{f}}
\newcommand\N{{\mathbb{N}}}
\newcommand\Pf{E}
\renewcommand\Qf{F}
\begin{document}

\begin{abstract}
Cohen, Lenstra, and Martinet have given conjectures for the distribution of class groups of extensions of number fields, but Achter and Malle have given theoretical and numerical evidence that these conjectures are wrong regarding the Sylow $p$-subgroups of the class group when the base number field contains $p$th roots of unity.
We give complete conjectures of the distribution of Sylow $p$-subgroups of class groups of extensions of a number field when $p$ does not divide the degree of the Galois closure of the extension.  These conjectures are based on $q\ra\infty$ theorems on these distributions in the function field analog  and use recent work of the authors on explicitly giving a distribution of modules  from its moments.  Our conjecture matches many, but not all, of the previous conjectures that were made in special cases taking into account roots of unity.
\end{abstract}

\title{Conjectures for distributions of class groups of extensions of number fields containing roots of unity}

\author{Will Sawin}
\address{Department of Mathematics\\
Columbia University \\
2990 Broadway \\
New York, NY 10027 USA}  
\email{sawin@math.columbia.edu}

\author{Melanie Matchett Wood}
\address{Department of Mathematics\\
Harvard University\\
Science Center Room 325\\
1 Oxford Street\\
Cambridge, MA 02138 USA}  
\email{mmwood@math.harvard.edu}

\maketitle

\section{Introduction}

In 1984,  Cohen and Lenstra \cite{Cohen1984} gave conjectures for the distribution of the odd parts of class groups of imaginary and real quadratic fields,
as well as for any finite abelian group $A$,  the prime-to-$|A|$ part of class groups of totally real $A$-fields.  Cohen and Martinet \cite{Cohen1990} generalized these conjectures
to the situation of an arbitrary number field $K_0$ as a base field, and arbitrary group $\Gamma$, 
giving conjectures for distributions of the ``good part'' of class groups of $\Gamma$-extensions of a fixed $K_0$ with any fixed behavior at the infinite places of $K_0$.  In particular, the ``good part'' includes the product of the  Sylow $p$-subgroups of the class group for $p\nmid |\Gamma|$.  However, these conjectures appear to be wrong at primes dividing the number of roots of unity in the base field, as shown by Achter  \cite{Achter2006} and Malle \cite{Malle2008} and in some cases can even be disproven using reflection principles (\S\ref{ss-reflection}).

In this paper, we give complete conjectures for the distribution of Sylow $p$-subgroups of class groups of $\Gamma$-extensions
(for $p\nmid |\Gamma|$)  of any number field $K_0$ that contains the $p$th (and possibly further) roots of unity.
\begin{conjecture}\label{C}
Let $\Gamma$ be a finite group and $p$ a prime $p\nmid |\Gamma|$.  Let $S=\Z_p[\Gamma]/(\sum_{\gamma\in \Gamma}\gamma)$.
Let $K_0$ be a number field with $u$ infinite places containing the $p^r$th roots of unity but not the $p^{r+1}$th roots of unity, for some $r\geq 1$. 
Let $\mathcal{E}=\mathcal{E}(\Gamma,K_0)$ be the set of isomorphism classes of  Galois $\Gamma$-extensions $K/K_0$ along with an isomorphism $\Gal(K/K_0)\isom \Gamma$. 
Let $\Cl_{K|K_0}:=\Cl_{K}/\Cl_{K_0}$.

As $K$ varies over $\mathcal{E}$, 
the distribution of $S$-modules $\Cl_{K|K_0}[p^\infty]$ is the one with the average number of surjective
morphisms from $\Cl_{K|K_0}[p^\infty]$ to $V$ being
$$
\frac{|(\wedge^2_{\Z_p}V)^\Gamma[p^r]|}{|V|^u}
$$
for any finite $S$-module $V$.
Theorem~\ref{T:formula}  shows there is a unique distribution on $S$-modules with these moments and gives an explicit formula for it.
\end{conjecture}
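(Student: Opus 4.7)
Conjecture~\ref{C} contains two distinct assertions: the number-theoretic prediction for the distribution of $\Cl_{K|K_0}[p^\infty]$, which is genuinely conjectural, and the concluding mathematical claim that the prescribed moments determine a unique probability distribution on finite $S$-modules with an explicit closed form. Only the latter admits proof, and it is asserted to follow from Theorem~\ref{T:formula}. My plan is to outline how I would verify the hypotheses of that theorem for the moments
\[
M(V) = \frac{|(\wedge^2_{\Z_p} V)^\Gamma[p^r]|}{|V|^u}.
\]

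First I would check that $M(V)$ takes positive rational values for every finite $S$-module $V$. Because $p\nmid |\Gamma|$, the ring $\Z_p[\Gamma]$ is a maximal $\Z_p$-order, $\Gamma$-invariants on finite $p$-primary modules form an exact functor, and $\wedge^2_{\Z_p}V$ carries a canonical $\Gamma$-action; hence $(\wedge^2_{\Z_p} V)^\Gamma[p^r]$ is a well-defined finite abelian $p$-group and $M(V)$ is a positive rational.

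Next I would verify the growth hypothesis that Theorem~\ref{T:formula} requires of its input moments. This is the crux of the argument. The denominator $|V|^u$ is the familiar Cohen--Lenstra factor, but the numerator $|(\wedge^2_{\Z_p}V)^\Gamma[p^r]|$ is an unfamiliar $V$-dependent multiplier encoding the root-of-unity correction. Writing $V$ as a sum of its isotypic components under the semisimple algebra $S\tensor \Q_p$ and using the $p$-adic filtration, one estimates $|(\wedge^2_{\Z_p}V)^\Gamma[p^r]| \leq p^{r \cdot c(V)}$ where $c(V)$ grows like $\binom{d}{2}$ in the minimal number $d$ of $S$-generators of $V$. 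The task is to check that this polynomial-in-$d$ bound on $\log_p M(V)$ fits within whatever growth regime Theorem~\ref{T:formula} tolerates, so that the Mobius-type inversion that recovers the point masses converges absolutely and produces nonnegative weights.

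Once the hypotheses are verified, one simply invokes Theorem~\ref{T:formula}, reads off the explicit formula for the distribution, and as a sanity check confirms that it specializes correctly: when $r=0$ (no $p$th roots of unity) one should recover the Cohen--Martinet prediction, and for cyclic $\Gamma$ the formula should match the special cases established by Achter and Malle. The main obstacle is the growth check in the second step; the extra $(\wedge^2_{\Z_p}V)^\Gamma[p^r]$ factor is what distinguishes this setting from the classical moment problems solved in prior work, and confirming that the authors' general framework accommodates such twisting factors without loss of uniqueness or positivity is the delicate point on which the entire reduction rests.
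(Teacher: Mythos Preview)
Your proposal rests on a misreading of Theorem~\ref{T:formula}. That theorem is not a general moment-problem criterion into which one plugs the moments $M(V)=|(\wedge^2_{\Z_p}V)^\Gamma[p^r]|/|V|^u$ after verifying some growth hypothesis; it \emph{is} the specific existence-and-uniqueness statement for precisely these moments, and its proof is the content you would need to supply. The general tool is Theorem~\ref{T:mom}, whose hypothesis (``well-behaved'') is the absolute convergence of certain M\"obius-inverted sums $v_{\underline{k},N}$, not a growth bound on $\log_p M(V)$. A polynomial estimate of the sort you sketch would not by itself establish absolute convergence of those alternating sums, and it would say nothing at all about the non-negativity of the $v_{\underline{k},N}$, which is required for \emph{existence} of the measure and is the most delicate part of the argument.

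The paper's actual route to Theorem~\ref{T:formula} is structurally different from what you outline. One first uses that $\Z_p[\Gamma]\cong\prod_i M_{n_i\times n_i}(R_{d_i})$ for unramified DVRs $R_{d_i}$, and applies Morita equivalence to reduce the moment problem on $S$-modules to one on $\prod_{i\geq 2} R_{d_i}$-modules. Lemmas~\ref{L:selfdualmom} and~\ref{L:nonselfdualmom} show that the moments factor over $\sigma$-orbits of the idempotents $e_i$ (self-dual representations and dual pairs), so the sum defining $v_{\underline{k},N}$ factors as well. Each factor is then computed explicitly via $q$-binomial identities in Propositions~\ref{P:DVRselfdual} and~\ref{P:DVRdualpairs}: this simultaneously verifies absolute convergence (well-behavedness) and, through the careful sign analysis in Lemma~\ref{L:Q} and the alternating-series argument for $\Pf_q$, non-negativity. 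The explicit formulas fall out of the same computation. Your proposal contains none of this machinery; the vague growth check you describe is neither what is needed nor what is done.
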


Conjecture~\ref{C} is motivated by Theorem~\ref{T:MainFF}, which is based 
on work of Liu, Zureick-Brown, and the second author \cite{Liu2019} and gives the moments of these distributions in a $q\ra\infty$ limit in the function field case.
The explicit formulas obtained in Theorem~\ref{T:formula} are based on recent work of the current authors \cite{Sawin2022} that allows one to explicitly describe a distribution of modules (or more general objects) given its moments (see Theorem~\ref{T:mom}). The formulas for the conjectural distributions, like all previous such formulas, are given in terms of $q$-series.  Some of the formulas are quite involved, and it would be interesting if they could be further simplified with ideas from the study of such $q$-series.    We give now, as a special case, the formulas for just the distribution on $p$-torsion that gives the moments of Conjecture~\ref{C}.

\begin{theorem}\label{T:ptors}
Let $\Gamma$ be a finite group and $p$ a prime such that $p\nmid |\Gamma|$.  Let
$V_1,\dots,V_c$ be the non-trivial irreducible representations of $\Gamma$ over $\F_p$.
Let $\kappa_i=\End_\Gamma(V_i)$ and $q_i=|\kappa_i|$
and 
$\dim_{\kappa_i} V_i=n_i$.
Let $\epsilon_i=-1$ if $(\wedge_{\kappa_i}^2 V_i)^\Gamma\ne 0$, let 
$\epsilon_i=1$ if $(\wedge_{\kappa_i}^2 V_i)^\Gamma= 0$ but $(V_i\tensor_{\kappa_i}V_i)^\Gamma\ne 0$, and let  $\epsilon_i=0$ if $(V_i\tensor_{\kappa_i}V_i)^\Gamma= 0$.
Let $u$ be a positive integer.
Let $R_\Gamma$ be the set
of isomorphism classes of finite dimensional representations $V$ of $\Gamma$ over $\F_p$ with $V^{\Gamma}=0$ (with a trivial topology and $\sigma$-algebra).

Then there is a unique measure $\nu$ on $R_\Gamma$
such that
$$
\int_{X\in R_\Gamma} |\Sur(X,V)|d\nu=\frac{|(\wedge^2_{\F_p}V)^\Gamma|}{|V|^u}
$$
for every  $V\in R_\Gamma$.  
For all non-negative integers $f_1,\dots,f_c$, we have, for $V=\prod_i V_i^{f_i}$,
\begin{align*}
&\nu(\{V\})=\\
&
\frac{|(\wedge^2_{\F_p} V)^\Gamma|}{|V|^u|\Aut(V)|}
\prod_{\substack{V_i\\\textrm{self-dual}}} 
\prod_{\ell\geq 0}(1+ q_i^{-un_i-\frac{\epsilon_i+1}{2}-\ell})^{-1} 
\prod_{\substack{i< j\\ \textrm{$V_i,V_j$ dual}}}  
\frac{\prod_{\ell=1}^\infty(1-q_i^{-\ell})\prod_{\ell=1}^{2un_i}(1-q_i^{-\ell})
}{
\prod_{\ell=1}^{un_i-f_i+f_j}(1-q_i^{-\ell})\prod_{\ell=1}^{un_i+f_i-f_j}(1-q_i^{-\ell})}
.
\end{align*}
if  
 $|f_i -f_j|\leq un_i$ for all  pairs of dual representations $V_i,V_j$,
 and
$ \nu(\{V\})=0$ otherwise.  
\end{theorem}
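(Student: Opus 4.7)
My plan is to invoke Theorem~\ref{T:mom} (from the authors' prior work \cite{Sawin2022}) for existence and uniqueness, and then verify the explicit formula by showing it reproduces the prescribed moments. Since $p \nmid |\Gamma|$, Maschke's theorem makes $\F_p[\Gamma]$ semisimple, so every $X \in R_\Gamma$ has a unique isotypic decomposition $X \cong \bigoplus_i V_i^{f_i}$ and $R_\Gamma$ is identified with $\N^c$. The crude bound $|(\wedge^2 V)^\Gamma|/|V|^u \leq |V|^{2-u}$ shows the moment sequence decays rapidly, so the hypotheses of Theorem~\ref{T:mom} are satisfied and $\nu$ is the unique measure with these moments.

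To verify the formula, I would compute $\sum_{X \in R_\Gamma} |\Sur(X, V)|\, \nu(\{X\})$ for each fixed $V$ and check it equals $|(\wedge^2 V)^\Gamma|/|V|^u$. Both the proposed measure and the moment factor compatibly over the orbits of the duality involution on $\{V_1, \ldots, V_c\}$. Indeed, using the Wedderburn decomposition
\[
\wedge^2 V = \bigoplus_i \wedge^2(V_i^{f_i}) \oplus \bigoplus_{i < j} V_i^{f_i} \tensor V_j^{f_j},
\]
the $\Gamma$-invariants pick up contributions exactly from pairs of dual irreducibles. Combined with the factorizations $|V| = \prod_i q_i^{n_i f_i}$, $|\Aut(V)| = \prod_i |\GL_{f_i}(\kappa_i)|$, and the compatible factorization of $|\Sur(X, V)|$ across isotypic components, this reduces the moment identity to a product of one-orbit identities.

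There are three cases per orbit. For self-dual $V_i$, the sum over $f_i$ is a symplectic ($\epsilon_i = -1$) or orthogonal ($\epsilon_i = 1$) Cohen--Lenstra--style moment identity, and its evaluation is the infinite product $\prod_{\ell \geq 0}(1 + q_i^{-un_i - (\epsilon_i+1)/2 - \ell})^{-1}$; the $\epsilon_i$-shift records whether the relevant $\Gamma$-invariant bilinear form on $V_i$ is antisymmetric or symmetric, which in turn determines whether the $f_i$-multiplicity structure on $(\wedge^2 V_i^{f_i})^\Gamma$ comes from $\wedge^2(\kappa_i^{f_i})$ or $\Sym^2(\kappa_i^{f_i})$. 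For a non-self-dual pair $V_i, V_j = V_i^*$, the sum over $(f_i, f_j)$ is a two-variable identity of $q$-Chu--Vandermonde type, producing the ratio of finite $q$-Pochhammer symbols, with the condition $|f_i - f_j| \leq u n_i$ arising as the support of the Gaussian binomial coefficient appearing in the summation.

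The main obstacle is verifying these $q$-series identities, especially in the dual-pair case: both the support constraint $|f_i - f_j| \leq un_i$ and the closed-form ratio of Pochhammer products must drop out of a two-variable $q$-hypergeometric summation, and I expect this to reduce to an application of the $q$-Vandermonde identity after a substitution that balances the $f_i, f_j$ degrees. A secondary subtlety is that the $V_i$ are irreducible over $\F_p$ but have endomorphism ring $\kappa_i$ possibly larger than $\F_p$, so one must keep careful track of the difference between $\F_p$- and $\kappa_i$-linear structures when computing $(\wedge^2_{\F_p} V)^\Gamma$ and the various automorphism and surjection counts as $q_i$-powers rather than $p$-powers.
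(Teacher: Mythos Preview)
Your overall plan—invoke Theorem~\ref{T:mom}, factor over duality orbits, reduce to one $q$-series identity per orbit type—matches the paper in spirit, but there is a genuine gap in the well-behavedness step. The bound $|(\wedge^2_{\F_p} V)^\Gamma|/|V|^u \leq |V|^{2-u}$ is false: for $V$ of $\F_p$-dimension $n$ one has $|\wedge^2 V| = p^{\binom{n}{2}}$, which exceeds $|V|^2$ once $n>5$, and in fact the moments here \emph{grow} (for a single self-dual $V_i$ the $V_i^f$-moment is $q_i^{\binom{f}{2}+(1-\epsilon_i)f/2 - un_i f}$, quadratic in the exponent). Well-behavedness in Theorem~\ref{T:mom} is not a decay condition on $M_N$ but absolute convergence of the inversion sums \eqref{E:defv}, and that convergence depends on cancellation with $\hat{\mu}(N,P)/|\Aut(P)|$; no crude bound on $M_N$ alone can give it.

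There is also a difference in direction worth flagging. You propose to \emph{verify} that the stated formula reproduces the moments; the paper instead \emph{derives} the formula by computing $v_{\underline{1},N}$ from \eqref{E:defv} directly (Propositions~\ref{P:selfdualvs} and~\ref{P:dualpairsvs}, via the $q$-binomial theorem). The paper's route establishes well-behavedness, nonnegativity, and the explicit formula in a single computation per orbit. Your route would still need a correct well-behavedness argument and, since uniqueness in Theorem~\ref{T:mom} is stated only among measures, either a separate check that the formula is nonnegative or an argument that the moment map is injective on all functions—both of which lead back to essentially the same computations the paper carries out.
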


For $V=\prod_i V_i^{f_i}$, we have $|V|=\prod_i q_i^{f_in_i}$, and $|\Aut(V)|=\prod_i q_i^{f_i^2}\prod_{\ell=1}^{f_i}(1-q_i^{-\ell})$
and 
$$|(\wedge^2_{\F_p} V)^\Gamma|=\prod_{V_i \textrm{ self-dual}} q_i^{\binom{f_i}{2} +(1-\epsilon_i)\frac{f_i}{2}}
\prod_{i<j, V_i,V_j \textrm{ dual}} q_i^{f_if_j}.$$
%

In the setting of 
Conjecture~\ref{C}, Conjecture~\ref{C} and Theorem~\ref{T:ptors} together imply that $\Cl_{K|K_0}[p]$, as a representation of $\Gamma$ over $\F_p$, is distributed as $\nu$ from Theorem~\ref{T:ptors}.

\subsection{Reflection principles and interpretation}\label{ss-reflection}

Perhaps the most important feature of the distribution of Theorem~\ref{T:ptors} is that not all representations $V \in R_\Gamma$ appear with positive probability. Instead, the probability of $\prod_i V_i^{f_i}$ occurring is zero unless $|f_i -f_j|\leq un_i$ for all  pairs of dual representations $V_i,V_j$. This prediction is closely related to the classical number-theoretic phenomenon of reflection principles.

The first reflection principle, due to Scholz, states that for $D <-3 $ a fundamental discriminant, the $3$-ranks of the class groups of $\mathbb Q( \sqrt{D})$ and $\mathbb Q(\sqrt{-3D})$ differ by at most one. To understand generalizations, it is helpful to observe that the $3$-parts of both of these class groups embed into the class group of $\mathbb Q( \sqrt{D}, \sqrt{-3})$. The class group of  $\mathbb Q( \sqrt{D}, \sqrt{-3})$ admits an action by the Galois group $(\mathbb Z/2)^2$ of  $\mathbb Q( \sqrt{D}, \sqrt{-3})$. The $3$-part of the class group splits into eigenspaces of the action corresponding to the characters of the Galois group, of which one is trivial and the other three are naturally the $3$-parts of the class groups of $\mathbb Q(\sqrt{D}), \mathbb Q(\sqrt{-3D})$, and $\mathbb Q(\sqrt{-3})$ (the last one also being trivial). Hence the Scholz reflection principle can be equivalently stated as the claim that two eigenspaces of the Galois action on the class group of $\mathbb Q( \sqrt{D}, \sqrt{-3})$ have ranks that differ by at most $1$.

Leopoldt~\cite{Leopoldt1958} generalized Scholz's reflection principle by considering a Galois extension $K$ of the rational numbers, with Galois group $G$, that contains the $p$th root of unity. The action on the $p$th roots of unity then gives a one-dimensional representation $\chi$ of $G$ over $\mathbb F_p$. For $V_1,\dots, V_c$ the irreducible representations of $G$ over $\mathbb F_p$, we can write the $p$-part of the class group as $\prod_i V_i^{f_i}$. Leopoldt's reflection principle bounds $|f_j-f_i|$ when $ V_j \cong \chi \otimes  V_i^\vee $.

In our setting, we take $K$ an extension of a fixed field $K_0$ with Galois group $\Gamma$. Since we are interested in random extensions, and we expect that, if $K_0$ does not contain the $p$th roots of unity, then almost all extensions $K$ of $K_0$ will not contain the $p$th roots of unity either, the reflection principle will only be relevant when $K_0$ contains the $p$th roots of unity. In this case, since the action of $\Gamma$ on the $p$th roots of unity is trivial, a generalization due to Gras~\cite{Gras1998} of Leopoldt's reflection principle implies that $|f_j-f_i|\leq u n_i$ if $V_j \cong V_i^\vee$.

The importance of reflection principles to class group statistics was pointed out by Breen, Varma, and Voight~\cite{Breen2021}. A conjecture on the distribution of class groups should certainly assign probability zero to every representation which is forbidden by the known reflection principles. Their work proves a reflection principle for the 2-part of the class group of an odd degree abelian Galois extension of $\mathbb Q$, and uses it to make conjectures about the distribution of the class group of such extensions, together with other invariants such as the unit signature and the narrow class group.

Theorem~\ref{T:ptors} implies that Conjecture~\ref{C} is compatible with Gras's reflection principle: Every representation of $\Gamma$ on the $p$-torsion part of the class group which is forbidden by the reflection principle is given probability zero in the predicted distribution. (The converse is also true: every representation given probability zero is explained by a reflection principle).

This is not because we used our knowledge of reflection principles to formulate the conjecture. Instead, Conjecture~\ref{C} is motivated by moment calculations in the function field context. Theoretically, we could have guessed the reflection principle from these moment calculations and Theorem~\ref{T:ptors}. In fact, going beyond the $p$-torsion part described in Theorem~\ref{T:ptors}, we see that Conjecture~\ref{C} implies a relationship between the $p^r$-torsion ranks associated to dual pairs of irreducible representations whenever the base field contains the $p^r$th roots of unity. We prove this relationship -- another reflection principle -- in Section \ref{S:dontappear}.

On the other hand, the conjectures of Cohen and Lenstra \cite{Cohen1984} and  Cohen and Martinet \cite{Cohen1990} state that each (invariant-free) representation of the Galois group $\Gamma$ should occur with positive probability. Thus they contradict the reflection principles in cases where $\Gamma$ has two dual but non-isomorphic irreducible representations over $\mathbb F_p$, the simplest of which occurs when $K_0=\Q$, $\Gamma=C_7$, and $p=2$. The group $C_7$ has two three dimensional irreducible representations $V_1,V_2$ and the representation $V_1^2$, say, can never occur as the $2$-part of the class group of a cyclic degree $7$ field by the reflection principle.

Closely related to reflection principles is the statistical dependence between different representations. The reflection principle $|f_i -f_j| \leq u n_i$, together with the phenomenon familiar from every case of Cohen-Lenstra that the $p$-rank $f_i$ can grow arbitrarily large, imply that $f_i$ and $f_j$ cannot be statistically independent of each other. This is in contrast to previous conjectures which predicted that the parts of  $\Cl_{K|K_0}[p^\infty]$ corresponding to different irreducible representations of $\Gamma$ should be independent. However, it is not clear from the considerations what the exact nature of the dependence should be.  Conjecture~\ref{C} implies independence between all the parts of $\Cl_{K|K_0}[p^\infty]$ corresponding to different irreducible representations except for a representation and its dual, and can be used to give an explicit formula for the dependence between those two representations. Both of these can be seen already for the $p$-parts in Theorem~\ref{T:ptors}.

In Section~\ref{S:example}, we give more details on the example of $K_0=\Q$ and $\Gamma=C_7$, which is the smallest case where we have two dual but non-isomorphic irreducible representations, so that the reflection theorem is relevant and this dependence occurs.

\subsection{Previous work}
In 2006, Achter  \cite{Achter2006} found that in a large $q$ limit, class groups of quadratic extensions of function fields $\F_q(t)$ did not always satisfy the analogues of the Cohen-Martinet conjectures for function fields.
In 2008, Malle \cite{Malle2008} found computational evidence that the conjectures of Cohen-Lenstra-Martinet were incorrect for the Sylow $p$-subgroups of class groups of number fields when $p$ divides the order of the roots of unity in $K_0$.   Garton \cite{Garton2015} showed that the discrepancies Achter found in the function field
case could also be explained by the presence of roots of unity in the base field.  

Since 2008, there have been several papers aimed at correcting these conjectures in the presence of roots of unity.  
In his original paper, Malle \cite{Malle2008} gave suggested conjectures for $K_0=\Q$  and $\Gamma=\Z/3\Z,\Z/5\Z$, as well as $K_0=\Q(\sqrt{-3})$ and $\Gamma=\Z/2\Z$.  
Malle gave further conjectures in a follow-up paper \cite{Malle2010}, including conjectures for $\Gamma=\Z/2\Z$ with the Sylow $p$-subgroup of the class group for any base field containing $p$th but not $p^2$th roots of unity.
In the $\Gamma=\Z/2\Z$ case, Garton \cite{Garton2015} studied the distribution Achter saw arising in the function field large $q$ limit in terms of a random matrix model, and gave the moments of this distribution.  He was able to give explicit formulas for the distribution for base fields containing 
$p^2$th but not $p^3$th  roots of unity.
Adam and Malle \cite{Adam2015} made conjectures in further scenarios, including that for $\Gamma=\Z/2\Z$ with any roots of unity in the base field
the class group distribution should match a limit of certain random matrix distributions, but they did not give explicit formulas for this distribution for a general base field.
Lipnowski, Sawin, and Tsimerman \cite{Lipnowski2020}, for the $\Gamma=\Z/2\Z$ and $K_0=\F_q(t)$ case, considered additional pairing data that arises on class groups in the presence of roots of unity, proved a function field result about these enriched class groups and gave conjectural explicit formulas, expressed in terms of these pairings, for the case when $\Gamma=\Z/2 \Z$ and $K_0$ is an arbitrary number field.  Liu~\cite[Conjecture 1.2]{Liu2022} gave a moment conjecture which overlaps (and agrees) with Conjecture~\ref{C} in the case that $p$ does not divide the order of any unramified extension of $K_0$ (without deriving formulas for the probability distribution).
We explain the relation of our conjectures to those in \cite{Malle2008,Malle2010,Adam2015,Lipnowski2020} in Section \ref{S:MalleCompare}. In summary, our conjectures agree with prior conjectures in many, but not all, cases where they overlap.

Previous work that has been based primarily on empirical computations has faced the challenge that there are a limited number of situations that can be experimentally explored,  and as we see in this paper there are many parameters in these distributions, creating a challenge to generalization from limited data.  Further, some of the formulas for the distributions are quite involved, which makes it challenging to guess a formula from data.  Our results in the function field case have the benefit of working for an arbitrary $\Gamma$ 
and number of roots of unity in the base rational function field, which allows us to see many more cases at once from this point of view than can be explored with empirical data.  Further, our new methods of producing distributions explicitly from their moments  produce formulas even when those formulas are complicated.
Still, we expect empirical data in the number field case to be an important next step in further understanding of the situation.  
In Section~\ref{S:data}, we explain how all of Malle's previous empirical data on class group distributions supports Conjecture~\ref{C}, including data from a case in which Malle did not make any conjecture.  We also discuss further cases in which it would be interesting to have empirical data.

\subsection{Further remarks}

\begin{remark}
Conjecture~\ref{C} is not precise, in that it does not specify an ordering on $\mathcal{E}$ so that the distribution of class groups is well-defined.
Cohen and Martinet \cite{Cohen1990} order fields by $\Nm\Disc K$, but this is known to not work in general, even when there are not relevant roots of unity in the base field \cite[p. 929]{Bartel2020}.   One possible ordering, as suggested by Bartel and Lenstra in \cite{Bartel2020} and also by Theorem~\ref{T:MainFF},   is by $\Nm\sqrt{\Disc (K/K_0)}$, where $\sqrt{}$ denotes the radical.
We certainly imagine the conjecture only holding for orderings such that the proportion of fields in $\mathcal{E}$ containing any fixed field $K_1\not\sub K_0$ is $0$.
\end{remark}

\begin{remark}\label{R:scatinf}
Note that these extensions in $\mathcal{E}$ in Conjecture \ref{C} are split completely at all infinite places of $K_0$, because either $p^r>2$ and all infinite places of $K_0$ are complex, or $p=2$, which implies $|\Gamma|$ is odd. 
\end{remark}

\begin{remark}\label{R:rind}
If we consider modules with the distribution from Conjecture~\ref{C},  and then take their $p^k$-torsion, we have a distribution on $p^k$-torsion $S$-modules
that does not depend on $r$ as long as $r\geq k$.  
So, for example, if we are only asking about the $p$-torsion (equivalently the groups mod $p$) then the conjectured distribution does not depend on $r$ for $r\geq 1$.   This phenomenon was noticed empirically by Malle for the $2$-ranks of cyclic cubic extensions of $K_0=\Q$ versus $K_0=\Q(i)$
\cite[Section 6.4]{Malle2010}.  
We can see this phenomenon explicitly in the formulas of Theorem~\ref{T:formula}.
Even before working out formulas for the distribution, this follows from the fact that the moments indexed by $p^k$-torsion groups do not depend on $r$ for $r\geq k$, and since these moments determine a unique distribution of $p^k$-torsion modules by Theorem~\ref{T:mom}, this distribution on $p^k$-torsion modules will not depend on $r\geq k$. 
\end{remark}

From Theorem~\ref{T:ptors}, one can work out, for a $V$ from the distribution $\nu$, the distribution of $V$ as an $\F_p$-vector space.  It is  simple to write down the classical moments of $|V|$.  
For a finite representation  $V$ of $\Gamma$ over $\F_p$, we have $$|V|^k=|\Hom_{\F_p}(V,\F_p^k)|=|\Hom_{\Gamma}(V,\Hom(\F_p[\Gamma],\F_p^k))|=|\Hom_{\Gamma}(V,\F_p[\Gamma]^k))|$$
(by adjointness and the fact that permutation representations are self-dual).
Every homomorphism of $\Gamma$-representations $ V\ra \F_p[\Gamma]^k$ is a surjection onto exactly one subrepresentation of $\F_p[\Gamma]^k$.
Thus our conjecture implies that the average of $| \Cl_{K|K_0} [p]|^k $ is
$$
\sum_{\substack{V\sub \F_p[\Gamma]^k \\ \textrm{sub-$\Gamma$-rep.} \\V^{\Gamma}=1  }} 
\frac{|(\wedge^2 V)^\Gamma|}{|V|^{u}},
$$
a value straightforward to write down from the representation theory of $\Gamma$ over $\F_p$.

If one wants to make a conjecture on the product of the Sylow $p$-subgroups of $\Cl_{K|K_0}$ for a finite set of
primes $p$ not dividing $|\Gamma|$, then Theorem~\ref{T:MainFF} suggests the distribution with moments
$$
\frac{|(\wedge^2_{\Z}V)^\Gamma[|\mu(K_0)|]|}{|V|^u},
$$
where $\mu(K_0)$ is the group of roots of unity of $K_0$.  (These moments are multiplicative over Sylow $p$-subgroups.)
The same argument as in the proof of Theorem~\ref{T:formula} shows there is a unique such measure, and the formulas are as in Theorem~\ref{T:formula} with an additional product over $p$.
For infinitely many primes, we would still conjecture that the class group distributions 
weakly converge to the unique distribution with the moments above
(see \cite{Sawin2022} for the relevant topology and the uniqueness theorem), but because of the definition of weak convergence,
that is equivalent to making the conjecture for every finite set of primes. 

As explained in \cite{Wang2021}, the distributions of class groups of Galois extensions, for $p\nmid |\Gamma|$, can
be used to get distributions of class groups of non-Galois extensions.  We explain the implication of Conjecture~\ref{C}
for distributions of class groups of non-Galois extensions in Section~\ref{S:NG}.

There are recent developments \cite{Smith2022,Smith2022a} and interesting questions about distributions of the Sylow $p$-subgroups of $\Cl_K$ for $p\mid|\Gamma|$, including about the interactions with roots of unity in the base field, but we will not address those questions here. 
 
In a forthcoming paper, the authors will extend our function field results and number field conjectures to the non-abelian analog of the class group, the Galois group of the maximal unramified extension.  One feature of this forthcoming treatment is that we will consider additional structure on the class group (and its non-abelian analog), as in \cite{Sawin2022a},  such that the formulas for the distribution with this additional structure will be nicer in certain ways. 

\subsection{Outline of the paper}
In Section~\ref{S:example}, we give the example of $\Gamma=C_7$, for which we give the formulas for the distribution in full and one can see the non-independence of dual representations.  In Section~\ref{S:FF}, we prove the function field $q\ra\infty$ moments that motivate Conjecture~\ref{C}.
In Section~\ref{S:Heuristic}, we explain the heuristic assumptions that lead from our function field theorem to Conjecture~\ref{C}.
In Section~\ref{S:mom}, we review the results on the moment problem for modules that we will need to apply to find formulas for our conjectured distribution from our conjectured moments.  In Sections~\ref{S:vs} and \ref{S:DVR}, we make some calculations of distributions on vector spaces and modules over DVR's that will be necessary for our final formulas.  In Section~\ref{S:Gamma}, we  obtain formulas for the distribution of Conjecture~\ref{C}.  In Section~\ref{S:dontappear}, we show that the modules that we conjecture appear as class groups with density $0$ actually never appear as class groups. In Section~\ref{S:NG}, we show how our conjectures for class group of Galois extensions lead to explicit conjectures for the distribution of class groups of non-Galois extensions.  In Section~\ref{S:data}, we discuss the match between Malle's number field data and our conjectures (which is excellent).  In Section~\ref{S:MalleCompare}, we compare our conjectures to conjectures made by Malle, and Adam and Malle.

\subsection{Acknowledgements}

We would like to thank Yuan Liu and John Voight for helpful conversations.  We would like to thank J\"{u}rgen Kl\"{u}ners, Yuan Liu, Gunter Malle, and an anonymous referee for helpful comments on an earlier version of this manuscript.
The second author would like to thank Ben Breen, Ila
Varma, and John Voight for interesting conversations on this question
during their joint investigation.
Will Sawin was supported by NSF grant DMS-2101491 while working on this paper.
Melanie Matchett Wood was  partially supported by a Packard Fellowship for Science and Engineering,  NSF CAREER grant DMS-1652116, and NSF Waterman Award DMS-2140043 while working on the paper.
She was also a Radcliffe Fellow during part of this work, and thanks the Radcliffe Institute for Advanced Study for their support. 
\subsection{Notation}
We write $\mathbb{N}$ for the non-negative integers.

We write $\F_q$ for the finite field with $q$ elements.

For a group $G$ with an action of $\Gamma$, we write $G^{\Gamma}$ for the invariants.  For a afinite abelian group $A$, we write $A[p^\infty]$ for the Sylow $p$-subgroup of $A$.

For an extension $K/K_0$, we let  $\Cl_{K|K_0}$ be the quotient of the class group $\Cl_{K}$ of $K$ by the image of $\Cl_{K_0}$ under inclusion of ideals from $K_0$ to $K$.
(Note for a prime $p\nmid [K:K_0]$, we have that $\Cl_{K_0}[p^\infty]\ra \Cl_{K}[p^\infty]$ is an injection, since taking the norm map and dividing by $[K:K_0]$ provides an inverse.)

A \emph{$\Gamma$-extension} $K/K_0$ is a Galois field extension $K/K_0$, along with a choice of isomorphism $\iota: \Gal(K/K_0)\isom \Gamma$.
An isomorphism of $\Gamma$-extensions $(K,\iota)$, $(K',\iota')$ is given by a field isomorphism $\phi: K\ra K'$, fixing each element of $K_0$ and such that the induced map
$\phi_*: \Gal(K/K_0)\ra \Gal(K'/K_0)$ satisfies $\iota=\iota'\circ \phi_*$.
When $K/K_0$ is a $\Gamma$-extension,  using $\iota$,
there is an associated action of $\Gamma$ on $\Cl_{K|K_0}$.  
Further, 
$\Cl_{K|K_0}$ is a $\Z[\Gamma]/(\sum_{\gamma\in \Gamma}\gamma)$-module, 
since $(\sum_{\gamma\in \Gamma}\gamma)I=\Nm_{K/K_0} I$.

When $K$ is a function field, and $\O_K$ is a maximal order in $K$, we write $\Cl(\O_K)$ for the quotient of the ideal group of
$\O_K$ by the principal ideals of $\O_K$.

Given a commutative ring $R$ and an $R$-module $M$, we write $\wedge^2_R M$ for the $R$-module
quotient of $M\tensor_R M$ by elements of the form $m\tensor m$.  We write $\wedge^2 M$ for $\wedge^2_{\Z} M$

We write $\Sur(A,B)$ to denote the set of surjections from $A$ to $B$ in the appropriate category.  

We also use $\Aut(M)$ to denote automorphisms in the appropriate category, e.g. of $R$-modules if $M$ is an $R$-module.
We will write $\Aut_R(M)$ if we think there is a possibility of confusion.

For a finite set $S$, we write $|S|$ for the number of elements of $S$.

We write $C_n$ for the cyclic group of order $n$.

A product $\prod_{i=n}^{n-1} a_n$ is always $1$ by convention.

We write $\eta_q(k):=\prod_{i=1}^k(1-q^{-i})$ and also write $\eta(k)=\eta_q(k)$ when $q$ is clear from context, and $\eta(0)=1$.

\section{Example: $\Gamma=C_7$}\label{S:example}

In this section, we consider the example $K_0=\Q$ and $\Gamma=C_7$ and $p=2$.    
We have that $S=\Z_2[\Gamma]/(\sum_{\gamma\in\Gamma} \gamma)=R_1 \times R_2$, where
each $R_i$ is isomorphic to the ring of integers in the degree $3$ unramified extension of $\Q_2$.  
There are three irreducible representations of $C_7$ over $\F_2$ (or $\Q_2$), the trivial representation, and $V_1,V_2$, two three dimensional dual representations
(that correspond to $R_1, R_2$ respectively).

For $K/\Q$ a $C_7$-extension,  $\Cl_K [2^\infty]$ is an $S$-module.  For an $S$-module $V$, we let
$a_1=a_1(V)$ denote the $R_1/2R_1$-rank of $V/(2,R_2)V$, i.e. the multiplicity of $V_1$ in $V/2V$.  We define $a_2=a_2(V)$ similarly.  
Conjecture~\ref{C} predicts that among such $K$, the $S$-module $\Cl_K [2^\infty]$ is distributed according to $\nu$, where
\[
\nu(\{ V \})= \frac{8^{a_1a_2}}{|V||\Aut(V)|}\prod_{i=1}^\infty(1-8^{-i})  \prod_{i= 1}^{ a_1 } (1 - 8^{ - 1- i } )
\prod_{i=  1}^{ a_2} (1 - 8^{ - 1- i} ) \cdot  \begin{cases} 1 + 8^{-1} &  \textrm{if  }
a_1=a_2 \\  1 & \textrm{if } |a_1 - a_2 | = 1 \\ 0 & \textrm{if } |a_1 -a_2|>1. \end{cases} \]
(Corollary~\ref{C:specific} gives this formula.)
For comparison to the moments, note $|(\wedge^2_{\Z_p}V)^\Gamma[p]|=8^{a_1a_2}$.
One can give an explicit formula for $V$ and $|\Aut(V)|$ (see proof of Proposition~\ref{P:DVRselfdual}
or \cite[II (1.6)]{Macdonald2015}).

From the above formulas, one can also work out the distribution of $V$ as a $\Z_2$-module.  As an example, we work out the distribution of $V/2V$, so the predicted distribution of $\Cl_K [2]$, from Theorem~\ref{T:ptors}.  Since $R_i/2R_i\isom \F_8$,  for a finite $S$-module $V$,  we have that $V/2$ is a vector space over $\F_2$ with dimension $3k$ for some $k$.
For $V$ in the support of $\nu$, we see that if $k$ is even,  we must have $a_1=a_2=k/2$, and if $k$ is odd, then either 
$a_1=(k-1)/2$ and $a_2=(k+1)/2$,  or $a_1=(k+1)/2$ and $a_2=(k-1)/2$.
Thus we have
\begin{align*}
&\nu(\{ V \,|\,  |V/2V|=8^k  \})=\\
&\begin{cases}
8^{-k^2/4-k}  (1-8^{-2}) \prod_{i=1}^{k/2} (1-8^{-i})^{-2}
{\prod_{i=2}^\infty(1-8^{-i})}
& 
\textrm{if $k$ even}
\\
2\cdot 8^{-(k^2+3)/4-k} {\prod_{i=1}^{(k+1)/2}(1-8^{-i})^{-1}} {\prod_{i=1}^{(k-1)/2}(1-8^{-i})^{-1}} {\prod_{i=1}^\infty(1-8^{-i})}
 & \textrm{if $k$ odd. }
 \end{cases}
\end{align*}
(To calculate $| \Aut(V/2V)|$, we observe that $V/2V \cong \mathbb F_8^{a_1} \times \mathbb F_8^{a_2}$ viewed as module over $\mathbb F_8 \times \mathbb F_8$, so its automorphism group is $\Aut_{\mathbb F_8}(\mathbb F_8^{a_1}) \times \Aut_{\mathbb F_8}(\mathbb F_8^{a_2}) $, and then apply the formula $|\Aut_{\F_q}(\F_q^n)|=q^{n^2}\prod_{i=1}^n(1-q^{-i})$.)

\section{Function field results}\label{S:FF}

By analyzing components of Hurwitz spaces, Liu,  Zureick-Brown, and the second author \cite{Liu2019} have found 
$q\ra\infty$ moments of Galois groups of maximal unramified extensions of $\Gamma$-extensions of $\F_q(t)$.  In this paper, we focus on
the class group distributions (i.e. the abelianizations of those Galois groups),  and we give such results in this section.

For a finite group $\Gamma$, let $E_{\Gamma}(n,q)$ be the set of isomorphism classes of extensions $K/\F_q(t)$ with a choice of isomorphism $\Gal(K/\F_q(t))\isom \Gamma$, that are split completely above $\infty$ and such that the radical of the discriminant ideal $\Disc(K/\F_q(t))$ has norm $q^n$.
For such a $K$, let $\O_K$ be the maximal order of $K$ over $\F_q[t]$ and let $\Cl(\O_K)$ denote its class group.

\begin{theorem}\label{T:MainFF}
Let $\Gamma$ be a finite group and $H$ be a finite abelian group with action of $\Gamma$
such that $(|H|,|\Gamma|)=1$ and $H^\Gamma=1$.  Let $h$ be an integer such that $h| |H|$.
Then
$$
\lim_{x\ra\infty} \lim_{\substack{q\ra\infty\\(q,|\Gamma||H|)=1\\(q-1,|H|)=h}} 
\frac{\sum_{n\leq x} \sum_{K\in E_\Gamma(n,q)} |\Sur_\Gamma( \Cl(\O_K),H)|  }{\sum_{n\leq x} |E_\Gamma(n,q)|}
 =\frac{|(\wedge^2 H[h])^\Gamma| }{|H|}
,$$
where in the limit $q$ is always a prime power.
\end{theorem}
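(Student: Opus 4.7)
The plan is to prove this via the Hurwitz space method of \cite{Liu2019}, by interpreting both the numerator and denominator as counts of $\F_q$-points on moduli schemes of branched covers of $\P^1$, and applying the Grothendieck--Lefschetz trace formula to reduce the inner limit to a count of geometric components fixed by Frobenius.

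First, I would realize $E_\Gamma(n,q)$ as essentially $\F_q$-points of a Hurwitz scheme $\operatorname{Hur}_\Gamma^n$ parameterizing $\Gamma$-covers of $\P^1_{\F_q}$ with $n$ branch points, split over $\infty$, stratified by allowed (non-identity) conjugacy classes of inertia in $\Gamma$ so that the discriminant radical has norm $q^n$. A pair $(K,\phi)$ with $K\in E_\Gamma(n,q)$ and a $\Gamma$-equivariant surjection $\phi:\Cl(\O_K)\twoheadrightarrow H$ corresponds, using $(|H|,|\Gamma|)=1$ and $H^\Gamma=1$, to a connected \'etale $H$-cover $L$ of $\Spec\O_K$ such that $L/\F_q(t)$ is Galois with group $H\rtimes \Gamma$ realizing the prescribed $\Gamma$-action on $H$; these pairs are $\F_q$-points of a larger Hurwitz scheme $\operatorname{Hur}_{H\rtimes \Gamma}^n$, with inertia forced to lie in the $\Gamma$-subgroup. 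The numerator is therefore (up to a factor of $1/|H|$ coming from translations in $H$) the $\F_q$-point count of $\operatorname{Hur}_{H\rtimes \Gamma}^n$.

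Second, both Hurwitz schemes are smooth of relative dimension $n$ over $\Spec\Z[1/(|\Gamma||H|)]$, and Grothendieck--Lefschetz applied to these moduli spaces gives
$$|\operatorname{Hur}^n(\F_q)| = q^n \cdot |\pi_0(\operatorname{Hur}^n_{\bar{\F}_q})^{\Frob_q}| + O(q^{n-1/2})$$
as $q\to\infty$ with $n$ fixed (the implicit constant depending on $n$). The inner limit therefore computes, up to the $1/|H|$ factor, the ratio of numbers of Frobenius-fixed geometric components on the $H\rtimes\Gamma$-side to those on the $\Gamma$-side. The outer $x\to\infty$ lets us restrict to $n$ large enough to apply the Conway--Parker stabilization of components developed in \cite{Liu2019}.

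Third, I would identify these components and the Frobenius action. In the stable range, the fiber of $\pi_0(\operatorname{Hur}_{H\rtimes \Gamma}^n)\to\pi_0(\operatorname{Hur}_\Gamma^n)$ over each $\Gamma$-component is a torsor for a Schur-multiplier-type invariant of the pair $(H\rtimes\Gamma, \Gamma)$. The hypotheses $(|H|,|\Gamma|)=1$ and $H^\Gamma=1$ allow a Lyndon--Hochschild--Serre analysis to collapse this invariant to $(\wedge^2 H)^\Gamma$, coming from the Schur multiplier of the abelian normal subgroup $H\lhd H\rtimes\Gamma$. Frobenius acts on this torsor through the cyclotomic character on the coefficient group $H$, hence as multiplication by $q$ on $\wedge^2 H$. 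A component is therefore Frobenius-fixed iff its label lies in $(\wedge^2 H)[q-1]=\wedge^2 H[(q-1,|H|)]=\wedge^2 H[h]$. Taking $\Gamma$-invariants and combining with the overall $1/|H|$ yields the desired ratio $|(\wedge^2 H[h])^\Gamma|/|H|$.

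The main obstacle is this third step: the precise identification of the extra Hurwitz components as $(\wedge^2 H)^\Gamma$ and the precise computation of the Frobenius action, both of which rest on the component analysis of \cite{Liu2019}. One also has to verify that $\Gamma$-invariants commute with $[h]$-torsion on $\wedge^2 H$ (which follows from $(|H|,|\Gamma|)=1$), and that the implicit constants in Grothendieck--Lefschetz are controlled well enough in $n$ so that summing over $n\leq x$ and then letting $q\to\infty$ followed by $x\to\infty$ gives the stated limit.
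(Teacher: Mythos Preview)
Your proposal is correct and follows essentially the same strategy as the paper: both reduce to the component-counting machinery of \cite{Liu2019}, identify the extra components on the $H\rtimes\Gamma$-side relative to the $\Gamma$-side with $H_2(H,\Z)^\Gamma=(\wedge^2 H)^\Gamma$ via the Lyndon--Hochschild--Serre sequence (using $(|H|,|\Gamma|)=1$), and then extract the factor $|(\wedge^2 H)^\Gamma[q-1]|$ from the Frobenius interaction. The only cosmetic difference is framing: you describe this last step as Frobenius acting by multiplication by $q$ on the torsor of component labels and counting fixed points, whereas the paper (following \cite{Liu2019} more literally) phrases it as comparing the number of $(q^{-1}-1)$th roots in $\ker(\overline{S}^1\to G_1)$ versus $\ker(\overline{S}^2\to G_2)$, the kernel of whose comparison map is exactly $H_2(H,\Z)^\Gamma$; these are equivalent.
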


Theorem~\ref{T:MainFF} follows from \cite[Theorem 1.1]{Liu2022}, which is a more general and refined moment theorem, 
but we give a short argument here for just the statement of Theorem~\ref{T:MainFF}.
 See also Corollary~\ref{C:MainFF} for the implication of these averages on the distribution of $\Cl(\O_K)$.

\begin{proof}
This is \cite[Theorem 1.4]{Liu2019}, except that we allow $(q-1,|H|)$ to vary, and there is an additional factor of $|(\wedge^2 H[n])^\Gamma| $ on the right-hand side, along with some simplifications because $H$ is abelian.  A finite abelian group with an action of $\Gamma$ is admissible in the sense of \cite{Liu2019} if and only if it has order prime to $|\Gamma|$ and no $\Gamma$-invariants.  Also, via class field theory,  the group $\Gal(K^\#/K)$ considered in 
\cite{Liu2019} has abelianization $\Cl(\O_K)$.

The result \cite[Theorem 1.4]{Liu2019} follows in a straightforward way from \cite[Theorem 10.4]{Liu2019}, and our theorem here follows
in exactly the same way from an analog of \cite[Theorem 10.4]{Liu2019} in which the hypothesis that $(q-1,|H|)=1$ is replaced by $(q-1,|H|)=h$
and the component counting result 
$
 \pi_{G,c}(q,n) =\pi_{\Gamma}(q,n)+O_G( n^{d_{\Gamma}(q)-2});
$
is replaced by
$$
 \pi_{G,c}(q,n) =|H_2(H,\Z)^{\Gamma}[h]| \pi_{\Gamma}(q,n)+O_G( n^{d_{\Gamma}(q)-2}).
$$
(Recall when $H$ is abelian, $H_2(H,\Z)\isom \wedge^2 H$.)

This analog of \cite[Theorem 10.4]{Liu2019} follows almost entirely the proof of  \cite[Theorem 10.4]{Liu2019}, with just a change at the very end.
None of the intermediate results require that $(q-1,|H|)=1$.  However,  at the very end of the proof of \cite[Theorem 10.4]{Liu2019}, it is said that 
``Since the kernel of the map $f$  
[$: \ker(\overline{S}^1\ra G_1)\ra \ker(\overline{S}^2 \ra G_2)$] 
on the right has order relatively prime to $q-1$, any element of $\ker(\overline{S}^2\ra G_2)$ has the same number of $(q^{-1}-1)$th roots as 
any preimage in $\ker(\overline{S}^1\ra G_1)$.''  
(We interpret $(q^{-1}-1)$ modulo the order of group in which we are taking roots.)
In the more general setting when $q-1$ and $|H|$
are not relatively prime, we need to account for the difference in the number of $(q^{-1}-1)$th roots.

For any finite group $|H|$ (not necessarily abelian) with $(|H|,|\Gamma|)=1$, we have an exact sequence
$$
1\ra H_2(H,\Z)^\Gamma \ra H_2(H\rtimes \Gamma,\Z) \ra H_2(\Gamma,\Z) \ra 1
$$
(e.g.  from the Lyndon-Hochschild-Serre spectral sequence and the fact that $(|H|,|\Gamma|)=1$ implies invariants and co-invariants are the same).
Since $H_2(H,\Z)$ and $H_2(\Gamma,\Z)$ are of relatively prime order, this exact sequence  splits canonically.
The group $\ker(\overline{S}^1\ra G_1)$ mentioned above is a quotient of $H_2(H\rtimes \Gamma,\Z)$ by a certain subgroup (referred to in 
\cite[Section 12]{Liu2019} as $Q_{c_1}$) generated by elements that are in the image of a map from $H_2((\Z/|\Gamma|\Z)^2,\Z)$ 
(since each $x\in c_1$ has order dividing $|\Gamma|$).  The group $\ker(\overline{S}^2\ra G_2)$ mentioned above is a quotient of $H_2(\Gamma,\Z)$
by the image of this certain subgroup ($Q_{c_2}$, which is the image of $Q_{c_1}$).   So the kernel of the map $f$ is  $H_2(H,\Z)^\Gamma$,
and in general any element of $\ker(\overline{S}^2\ra G_2)$ has $|H_2(H,\Z)^\Gamma[q-1]|$ times as many $(q^{-1}-1)$th roots as 
any preimage in $\ker(\overline{S}^1\ra G_1)$.  This is the only change needed to prove the analog of \cite[Theorem 10.4]{Liu2019}
described above, which can then be used to prove this result exactly as in the proof of \cite[Theorem 1.4]{Liu2019}.
\end{proof}

\section{Heuristic argument}\label{S:Heuristic}

In this section, we explain the heuristic argument that leads to Conjecture~\ref{C}.

The Cohen-Martinet conjectures \cite{Cohen1990} include some predictions about Sylow $p$-subgroups of relative class groups of $\Gamma$-extensions when $p\mid \Gamma$, but in this paper we only consider the case when $p\nmid |\Gamma|$.
In this case,  Cohen and Martinet make conjectures about the distribution of $\Cl_{K|K_0}[p^\infty]$ for a base field $K_0$
and the family of all $\Gamma$-extensions $K/K_0$ with specified behavior at the infinite places of $K_0$.  
In this paper,  we only consider families $\mathcal{E}$ of $\Gamma$-extensions $K/K_0$ in which all infinite places of $K_0$ are required to split completely.  (See Remark~\ref{R:scatinf}.)
In this case, the only feature of $K_0$ that Cohen and Martinet's conjectures \cite{Cohen1990} take as input is the number $u$ of infinite places of $K_0$ (see, e.g.,  \cite[Theorem 4.1]{Wang2021}).
As Malle's computations \cite{Malle2008,Malle2010} suggest and Theorem~\ref{T:MainFF} shows (in the function field case) one must also take into account the maximal $r$ such that  $K_0$ contains the $p^r$th roots of unity.
One heuristic assumption we make here are that $u$ and $r$ are the only relevant inputs from the base field into the conjectures. (We hope that conceptual evidence for this heuristic will eventually be found in the function field case. If moments are calculated over a function field of arbitrary genus, and shown to depend only on $u$ and $r$, this heuristic assumption would be more justified, albeit still depending on the reliability of analogy between number fields and function fields.) 

In the function field case, we take the class group of a ring $\O_K$ whose fraction field is $K$, but unlike the number field case, there is no canonical choice of such a ring.  Instead, we pick a set of places of $K$ that we consider ``infinite places'' for the sake of the analogy, and we let $\O_K$ be the elements of $K$ that have non-negative valuation at all non-infinite places of $K$.  
Via this analogy, we expect there to be uniform behavior across number fields and function fields (taking into account their number of infinite places, and the roots of unity that they contain).  

However, in the function field case,  we can change our notion of ``infinite places'' by adding an additional place $v$ of $K_0$ to the list of infinite places.  The new family $\mathcal{E}'$ of $\Gamma$-extensions will be those from the original family $\mathcal{E}$ that are split completely at $v$.  It is generally expected that such a restriction at a non-archimedean place does not change class group statistics (e.g. see  \cite[Theorem 1]{Bhargava2015d}, \cite[Corollary 4]{Bhargava2016}, \cite[Conjecture 1.4]{Wood2018}).   (We hope that future results will be proven in the function field case that will more fully justify this expectation.) Then,  $\Cl_{\O'_K|\O'_{K_0}}$ (for the new $\O'_K,\O'_{K_0}$) after the inclusion of $v$ as an infinite place,  is the quotient, as a $\Z[\Gamma]$-module, of the original $\Cl_{\O_K|\O_{K_0}}$ by any prime of $K$ over $v$.
We assume the heuristic that this prime is distributed uniformly in $\Cl_{\O_K|\O_{K_0}}$. (See \cite{Klagsbrun2017,Klagsbrun2017a,Wood2018} for some conjectures and results in this direction. We hope that future results will be proven in the function field case that will more fully justify this heuristic.)
If $X$ is a random $\Z[\Gamma]$-module with $\E(|\Sur_\Gamma(X,G)|)=M_G$ for all finite $\Z[\Gamma]$-modules $G$, and if we let $X'$ be the quotient of $X$ by a uniform random element of $X$, then
$\E(|\Sur_\Gamma(X',G)|)=M_G/|G|$.  Theorem~\ref{T:MainFF} gives (in a large $q$ limit) moments of $\Cl(\O_K)$ assuming only one infinite place, and so by the reasoning above, we expect if we modified so as to  define $\O_K$ using $u$ infinite places of $\F_q(t)$, then we would replace the $|H|$ in the denominator of Theorem~\ref{T:MainFF} with $|H|^u$.  (Again, we hope to see such a result proven in the future.  See also \cite[Sections 6.2 and 6.3]{Liu2022} for different theoretical heuristics for the $|H|^u$ factor.)

Heuristically further  we assume that the limits in Theorem~\ref{T:MainFF} (and the extension suggested above to $u$ infinite places) can be exchanged, as was proven in some special cases in \cite{Ellenberg2016}.  Since we expect every base field $K_0$ with a fixed number $u$ of infinite places and $p^r$th but not $p^{r+1}$th roots of unity to give the same class group distributions,  if we consider all $q$ such that $q-1$ has a fixed valuation at $p$, then we do not expect the limit in $q$ to change the average, and in particular we expect 
Theorem~\ref{T:MainFF} (and the extension suggested above to $u$ infinite places) to hold without the limit in $q$.
Additionally, we expect the same result for number fields with a given $u$ and $r$, and this  leads precisely to the statement of Conjecture~\ref{C}.

\section{Results on the moment problem for $R$-modules}\label{S:mom}
In \cite{Sawin2022}, we give a result to construct a distribution from its moments in a rather general category.  
In this section, we review that result focusing on the category of finite $R$-modules for a DVR $R$ with maximal ideal $\mathfrak{m}$, and very similar categories.

Let $R$ be either a DVR with finite residue field, a quotient ring of a DVR with finite residue field, or a finite product of such rings.
Let $\hat{R}$ denote the product of the completions of $R$ at its maximal ideals $\mathfrak{m}_1,\dots,\mathfrak{m}_r$.  
Let $\mathcal{P}$ be the set of isomorphism classes of finitely generated $\hat{R}$-modules.  
For $\underline{k}=(k_1,\dots,k_r)\in \mathbb{N}^r$, let $\mathfrak{m}^{\underline{k}}=\prod_i \mathfrak{m}_i^{k_i}$
and for $X$ an $\hat{R}$-module let $X^{\leq \underline{k}}=X/\mathfrak{m}^{\underline{k}}X$.
We consider a topology on $\mathcal{P}$ generated by $\{X \,|\,  X^{\leq \underline{k}} \isom N\}$ for $\underline{k}\in \mathbb{N}^r$ and $N$
a finite $ R/\mathfrak{m}^{\underline{k}}$-module  (and we take the associated Borel $\sigma$-algebra on $\mathcal{P}$).

The isomorphism classes of finite $R$-modules (equivalently, finite $\hat{R}$-modules) are in bijection with 
$r$-tuples $\underline{\lambda}=(\lambda^1,\dots,\lambda^r)$ of 
finite partitions $\lambda^i=(\lambda^i_1\geq \lambda^i_2\geq\dots)$, where the parts of $\lambda^i$ are bounded above by the
minimal positive integer $a_i$ such that $\mathfrak{m}_i^{a_i}=0$.
(Here, $\underline{\lambda}$ corresponds to the module $N_{\underline{\lambda}}:=\oplus_{i,j} R/\mathfrak{m_i}^{\lambda^i_j}$. )
 We write $\lambda'$ for the conjugate of a partition.  The operation $N\mapsto N^{\leq \underline{k}}$ simply truncates the $i$th associated conjugate partitions after the first $k_i$ terms.
We say a module $N$ associated to $\underline{\lambda}$   is semi-simple if $(\lambda^i)'_2=0$ for all $i$, i.e.
$N\isom \prod_i (R/\mathfrak{m_i})^{e_i}$ for some positive integers $e_i$.

Let $q_i=|R/{\mathfrak{m}_i}|$.
For two $r$-tuples of partitions $\underline{\lambda},\underline{\pi}$ and a surjection $g: N_{\underline{\pi}} \ra N_{\underline{\lambda}}$
 we define $\mu(g)$ to be $0$ if $\ker g$ is not semi-simple and $\prod_i (-1)^{e_i}q_i^{\binom{e_i}{2}}$ if $\ker g\isom \prod_i (R/\mathfrak{m_i})^{e_i}$. 
 For finite $R$-modules $N,P$, we define $\hat{\mu}(N,P)=\sum_{g\in \SSur(P,N)/\Aut(N)} \mu(g)$.

For a finite $R$-module $N$, the \emph{$N$-moment}
 of a measure $\nu$ on $\mathcal{P}$ is defined to be
$$
\int_{X\in \mathcal{P}} |\SSur_R(X,N)| d\nu.
$$
Given values $M_N$ for each finite $R$-module $N$, for each  $\underline{k}\in\mathbb{N}^r$ and 
 finite $R/\mathfrak{m}^{\underline{k}}$-module $N$, we define (what will be the formulas for our measures)
\begin{equation}\label{E:defv}
v_{\underline{k},N}:=\sum_{\substack{ P \textrm{ finite}\\ \textrm{$R/\mathfrak{m}^{\underline{k}}$-module}}} \frac{\hat{\mu}(N,P)}{|\Aut(P)|} M_P,
\end{equation}
and we say the values $M_N$ are \emph{well-behaved} (for $R$) if the sum \eqref{E:defv} is absolutely convergent for each $\underline{k}\in\mathbb{N}^r$ and 
 finite $R/\mathfrak{m}^{\underline{k}}$-module $N$.

\begin{theorem}[{\cite[Theorems 1.7, 1.8]{Sawin2022} }]\label{T:mom}
Let $R$ be a finite product of quotients of DVRs with finite residue fields and, for each finite $R$-module $N$, let $M_N\in \R$ such that
the values $M_N$ are well-behaved.  Then we have the following.
\begin{enumerate}
\item[(Existence):] There is a measure on $\mathcal{P}$ with $N$-moment $M_N$ for each finite $R$-module $N$ if and only if the $v_{\underline{k},N}$ defined above are non-negative for each $\underline{k}$ and $N$.
\item[(Uniqueness):] When such a measure $\nu$ exists, it is unique and is given by the formulas
$$
\nu(\{X \,|\, X^{\leq \underline{k}} \isom N\}) =v_{\underline{k},N}.
$$
\item[(Robustness):] If $\nu^t$ are measures on $\mathcal{P}$ such that for each finite $R$-module $N$,
$$
\lim_{t\ra\infty} \int_{X \in \mathcal{P}} |\SSur(X,N)| d\nu^t= M_N, 
$$
then a measure $\nu$ with moments $M_N$ exists and
the  $\nu^t$ weakly converge to $\nu$.
\end{enumerate}
\end{theorem}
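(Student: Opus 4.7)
The central strategy is M\"obius inversion on the poset of finite $R/\mathfrak m^{\underline k}$-modules, performed separately at each truncation depth $\underline k \in \N^r$. The auxiliary function $\hat\mu(N,P)$ is precisely designed as the combinatorial inverse of surjection counting, so the entire theorem should reduce to the single identity
\[
\sum_{P \text{ finite } R/\mathfrak m^{\underline k}\text{-module}} \frac{\hat\mu(N,P)\,|\Sur(N',P)|}{|\Aut(P)|} \;=\; \delta_{N,N'}
\]
for all finite $R/\mathfrak m^{\underline k}$-modules $N, N'$. Once this identity is in hand, existence, uniqueness, and robustness all follow along fairly standard lines for projective limits of discrete probability distributions.

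\textbf{Uniqueness.} For a finite $R/\mathfrak m^{\underline k}$-module $P$, any homomorphism $X\to P$ factors through $X^{\leq\underline k}$, giving
\[
M_P \;=\; \sum_N |\Sur(N,P)|\,\nu\bigl(\{X\mid X^{\leq\underline k}\isom N\}\bigr),
\]
with $N$ ranging over isomorphism classes of finite $R/\mathfrak m^{\underline k}$-modules. Multiplying by $\hat\mu(N',P)/|\Aut(P)|$, summing over $P$, exchanging sums using well-behavedness, and applying the M\"obius identity isolates $\nu(\{X\mid X^{\leq\underline k}\isom N'\}) = v_{\underline k,N'}$. Since the cylinder sets generate the Borel $\sigma$-algebra of $\mathcal P$, this determines $\nu$ uniquely and simultaneously proves the explicit formula claimed in the theorem.

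\textbf{Existence.} Necessity of $v_{\underline k,N}\geq 0$ is immediate since these are candidate measures of cylinder sets. For sufficiency, I would construct $\nu$ via Kolmogorov extension on the inverse system of finite measures assigning mass $v_{\underline k,N}$ to each finite $R/\mathfrak m^{\underline k}$-module $N$. The two consistency checks are that the total mass at each depth equals $M_0$ (the moment at the zero module) and that for $\underline k'\geq\underline k$,
\[
v_{\underline k,N} \;=\; \sum_{N'\colon (N')^{\leq\underline k}\isom N} v_{\underline k', N'}.
\]
Both reduce, after substituting definitions and rearranging under the absolute-convergence bound from well-behavedness, to aggregated forms of the core M\"obius identity organized by how surjections behave under truncation $X\mapsto X^{\leq\underline k}$. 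The moments of the resulting $\nu$ are then $M_N$ by reversing the manipulation in the uniqueness paragraph.

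\textbf{Robustness and main obstacle.} If $M_N^t \to M_N$, the well-behavedness hypothesis makes each $v_{\underline k,N}$ an absolutely convergent linear functional of the moment sequence, and a dominated-convergence argument yields $v_{\underline k,N}^t \to v_{\underline k,N}$ termwise; non-negativity is preserved in the limit, so the existence half supplies $\nu$ with moments $M_N$, and termwise convergence of cylinder-set probabilities upgrades to weak convergence because those cylinders form a countable clopen base of $\mathcal P$. The decisive technical step throughout is the M\"obius identity itself: because $\hat\mu(N,P)$ vanishes unless the kernel of the corresponding surjection is semi-simple, the cancellations must be tracked along a Loewy-type filtration of $P$, factor by factor, exploiting the specific weights $\prod_i (-1)^{e_i} q_i^{\binom{e_i}{2}}$ attached to semi-simple kernels. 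A close second difficulty is the interchange of infinite summations when recovering $M_N$ from the constructed $\nu$, which is exactly what the well-behavedness hypothesis is engineered to license.
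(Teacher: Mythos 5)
First, a point of order: the paper does not actually prove Theorem~\ref{T:mom}. It is imported wholesale from \cite[Theorems 1.7, 1.8]{Sawin2022}, and the only in-paper content is the remark translating the ``diamond category'' and ``level'' language of that reference into modules over $R$. So there is no internal proof to compare against; your proposal must be judged against the argument in the cited paper. Its architecture you do reconstruct correctly: level-by-level M\"obius inversion, with the key identity $\sum_P \hat\mu(N,P)\,|\Sur(N',P)|/|\Aut(P)| = \delta_{N,N'}$ (which, after rewriting $|\Sur(N',P)|/|\Aut(P)|$ as a count of submodules $K\leq N'$ with $N'/K\isom P$, is exactly the defining recursion of the M\"obius function of the submodule lattice combined with Hall's computation that it vanishes off semisimple intervals and equals $\prod_i(-1)^{e_i}q_i^{\binom{e_i}{2}}$ on them), followed by an inverse-limit construction of $\nu$ from the level measures $v_{\underline{k},\cdot}$. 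The uniqueness paragraph is essentially complete.

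Two steps are genuinely incomplete. (a) Robustness: ``dominated convergence'' does not apply as stated. Pointwise convergence $M_P^t\to M_P$ supplies no dominating sequence for $\sum_P|\hat\mu(N,P)|\,M_P^t/|\Aut(P)|$, and without one, mass can escape along large $P$ (pointwise convergence of nonnegative sequences does not imply convergence of weighted sums, even when the limit sum converges absolutely). The fix --- and the substantive content of the robustness theorem in \cite{Sawin2022} --- is to exploit that each $\nu^t$ is an honest measure, so that its own moments give uniform tail control on the level-$\underline{k}$ marginals (for instance $\nu^t(\{X\,|\,X^{\leq\underline{k}}\isom N\})\leq M_N^t/|\Aut(N)|$, since every such $X$ admits at least $|\Aut(N)|$ surjections onto $N$); this is what licenses a truncation of the sum uniformly in $t$. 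Your sketch never uses the positivity of the $\nu^t$, and the statement is false without it. (b) In the existence direction, both the consistency of the $v_{\underline{k},\cdot}$ across levels and the recovery of $M_N=\sum_P|\Sur(P,N)|\,v_{\underline{k},P}$ require interchanging a sum over infinitely many $P$ with the infinite sum defining each $v_{\underline{k},P}$; absolute convergence of each individual $v_{\underline{k},N}$ does not by itself control this double sum. You flag this as a difficulty but present well-behavedness as automatically ``licensing'' it; in fact the definition in \cite{Sawin2022} carries an extra weight ($Z(\pi)^3$, bounded in the present module setting by the paper's remark) precisely to make these interchanges work, so this step needs an actual estimate rather than an appeal to the stated hypothesis.
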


\begin{remark}
We give some notes on translation from the more general language of \cite{Sawin2022}.
In \cite[Lemma 6.1]{Sawin2022}, it is explained that the category of $R$-modules is a ``diamond category'' and the ``levels'' of \cite{Sawin2022}
correspond to modules over the finite quotient rings $R/\mathfrak{m}^{\underline{k}}$ of $R$.
The results \cite[Lemma 5.7, Lemma 5.10 (2)]{Sawin2022} show that 
$\mathcal{P}$ is as described here, and \cite[Lemmas 2.6, 3.8]{Sawin2022} show that $\mu$ is as defined here.
The definition of ``well-behaved'' in  
\cite{Sawin2022} involves an additional factor of $Z(\pi)^3$, which
by \cite[Lemma 3.11]{Sawin2022} is at most $8^r$ in our case and thus does not affect absolute convergence.

\end{remark}

The measures in this paper will turn out to be supported on finite $R$-modules, which we will show using the formulas provided by
Theorem~\ref{T:mom}.  Then we can use the following statement, weaker than Theorem~\ref{T:mom}, but simpler to think about
 because it only involves finite $R$-modules.

\begin{corollary}\label{C:finsup}
Let $R$ be a finite product of quotients of DVRs with finite residue fields and, for each finite $R$-module $N$, let $M_N\in \R$ such that
the values $M_N$ are well-behaved and are the moments of a measure $\nu$ supported on finite $R$-modules.  Then if $\nu^t$ are measures on 
the set of isomorphism classes of finite $R$-modules such that for each finite $R$-module $N$,
$$
\lim_{t\ra\infty} \int_{X \in \mathcal{P}} |\SSur(X,N)| d\nu^t= M_N, 
$$
then  the $\nu^t$ weakly converge to $\nu$ (for the discrete topology).
\end{corollary}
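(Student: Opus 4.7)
The plan is to deduce Corollary~\ref{C:finsup} from the Robustness part of Theorem~\ref{T:mom}. First, regard each $\nu^t$ as a measure on all of $\mathcal{P}$ by extending by zero outside the subset of finite $R$-modules (and similarly for $\nu$). The moment hypothesis of the corollary is then exactly the hypothesis of Theorem~\ref{T:mom}(Robustness), so $\nu^t$ weakly converges to $\nu$ in the topology on $\mathcal{P}$ described in this section.

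The key observation is that every finite $R$-module $M$ is a clopen point of $\mathcal{P}$. For openness, choose $\underline{k}=(k_1,\dots,k_r)$ with each $k_i$ strictly exceeding every part of the partition $\mu^i$ describing the $\mathfrak{m}_i$-primary component of $M$. Given any $\hat{R}$-module $X$ with partition tuple $\underline{\lambda}$, the description of the truncation $X^{\leq\underline{k}}$ in terms of capping the $i$-th parts at $k_i$ shows that $X^{\leq\underline{k}}\isom M$ translates to $\min(\lambda^i_j,k_i)=\mu^i_j$ for all $i,j$; since $\mu^i_j<k_i$, this forces $\lambda^i_j=\mu^i_j$, hence $X\isom M$. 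Thus $\{M\}=\{X\mid X^{\leq\underline{k}}\isom M\}$ is a basic open set, and its complement can be written as a union over $\underline{k}'$ of the basic open sets $\{X\mid X^{\leq\underline{k}'}\not\isom M^{\leq\underline{k}'}\}$, so $\{M\}$ is also closed.

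Because $\{M\}$ is clopen, its boundary is empty, so weak convergence on $\mathcal{P}$ together with the Portmanteau theorem yields $\nu^t(\{M\})\to\nu(\{M\})$ for every finite $R$-module $M$. Taking the trivial module $N=0$ in the moment hypothesis, where $|\Sur(X,0)|=1$ for every $X$, gives convergence $\nu^t(\mathcal{P})\to\nu(\mathcal{P})$ of total masses. Pointwise convergence on a countable discrete space combined with convergence of total masses upgrades to weak convergence in the discrete topology by a standard $\epsilon/3$ truncation argument: split any bounded test function into its contribution from a large finite collection of modules (on which pointwise convergence suffices) and a tail controlled by the total-mass convergence.

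The main obstacle is conceptual rather than technical: one must see that the $\mathcal{P}$-topology and the discrete topology agree at every finite module, so that weak convergence in the sense of Theorem~\ref{T:mom}(Robustness) is strong enough to give weak convergence in the discrete topology. Once the clopen description of singletons is in hand, the remainder is routine measure-theoretic bookkeeping.
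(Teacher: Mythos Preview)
Your proposal is correct and follows the approach the paper intends: the corollary is stated immediately after Theorem~\ref{T:mom} as a simpler consequence of its Robustness clause, and the paper does not write out a separate proof. Your elaboration—extending the $\nu^t$ by zero to $\mathcal{P}$, invoking Robustness, observing that each finite module is a clopen singleton (indeed, for $\underline{k}$ large the basic open $\{X\mid X^{\leq\underline{k}}\isom M\}$ equals $\{M\}$, and for fixed $\underline{k}$ the basic opens partition $\mathcal{P}$, so each is clopen), and then combining pointwise convergence with total-mass convergence from the $N=0$ moment—fills in exactly the details the paper leaves implicit.
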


\section{Measures on the set of isomorphism classes of $\F_q$ vector spaces}\label{S:vs}
In this section, we compute some distributions from their moments that will be important for our eventual class group conjectures.

If  $q$ is a prime power, then $R=\F_q$ is a quotient of a DVR with finite residue field, so we can apply Theorem~\ref{T:mom}.  The category of finite $R$-modules is just the category of finite $\F_q$-vector spaces (and $\hat{R}=R$).

\begin{proposition}\label{P:selfdualvs}
Let $q$ be a prime power.
 Let $t>0$.  The values $M_{{\F_q}^\rho}=q^{\frac{\rho^2+\rho}{2}-t\rho}$ are well-behaved (for $R$) and are the moments of the measure $\nu$ (on isomorphism classes of finite $\F_q$-vector spaces) 
 such that 
$$
\nu({\F_q}^\lambda)=\frac{q^{-\binom{\lambda}{2}-t\lambda} } {\prod_{i=1}^\lambda(1-q^{-i})\prod_{i\geq 0}(1+q^{-i-t}) }=\frac{M_{{\F_q}^\lambda}}{|\Aut(\F_q^\lambda)|} 
\frac{1}{\prod_{i\geq 0}(1+q^{-i-t})}
$$
for all $\lambda\in \mathbb{N}$.
\end{proposition}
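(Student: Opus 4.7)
The plan is to apply Theorem~\ref{T:mom} with $R = \F_q$. Since $R$ is a field, there is a single ``level,'' every finite $R$-module is semi-simple, and $\mathcal{P}$ is identified with $\N$ via $\lambda \leftrightarrow \F_q^\lambda$. Each surjection $g \colon \F_q^\pi \to \F_q^\lambda$ has kernel $\F_q^{\pi-\lambda}$, so $\mu(g) = (-1)^{\pi-\lambda} q^{\binom{\pi-\lambda}{2}}$; the number of surjections modulo $\Aut(\F_q^\lambda)$ is the Gaussian binomial $\binom{\pi}{\lambda}_q = q^{\lambda(\pi-\lambda)}\eta_q(\pi)/(\eta_q(\lambda)\eta_q(\pi-\lambda))$.

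I would then plug these into \eqref{E:defv} along with $|\Aut(\F_q^\pi)| = q^{\pi^2}\eta_q(\pi)$ and $M_{\F_q^\pi} = q^{\binom{\pi+1}{2}-t\pi}$; the $\eta_q(\pi)$ factors cancel, and the remaining exponent of $q$ (after substituting $s = \pi-\lambda$) collapses algebraically to $-\binom{\lambda}{2} - t\lambda - ts$, giving
$$v_{\F_q^\lambda} = \frac{q^{-\binom{\lambda}{2}-t\lambda}}{\eta_q(\lambda)}\sum_{s \ge 0}\frac{(-q^{-t})^s}{\eta_q(s)}.$$
Along the way, the summands are bounded by a constant times $q^{-t\pi}$, giving absolute convergence (so the moments are well-behaved since $t > 0$).

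The inner $q$-series is the specialization $q \mapsto q^{-1}$, $z = -q^{-t}$ of Euler's classical identity $\sum_{n\ge 0} z^n/(q;q)_n = \prod_{k\ge 0}(1-zq^k)^{-1}$, and so evaluates to $\prod_{k\ge 0}(1+q^{-k-t})^{-1}$. This yields exactly the proposed formula for $\nu(\F_q^\lambda)$, which is manifestly non-negative; hence by Theorem~\ref{T:mom}, a measure with these moments exists, is unique, and equals $\nu$. The main (and essentially only nontrivial) step is recognizing the $q$-series sum as a specialization of Euler's identity; everything else is routine manipulation of $q$-Pochhammer symbols.
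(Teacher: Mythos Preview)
Your proposal is correct and follows essentially the same route as the paper's proof: both compute the sum \eqref{E:defv} directly, reduce it to $\frac{q^{-\binom{\lambda}{2}-t\lambda}}{\eta_q(\lambda)}\sum_{e\ge 0}\frac{(-q^{-t})^e}{\eta_q(e)}$, and evaluate the inner series via the same Euler/$q$-binomial identity. The only minor addition is that you explicitly note the non-negativity needed to invoke the existence clause of Theorem~\ref{T:mom}, which the paper leaves implicit.
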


Recall $\eta(k):=\prod_{i=1}^k(1-q^{-i})$.

\begin{proof}
We apply Theorem~\ref{T:mom}.
Let $N=\F_q^\lambda$ and $P=\F_q^{\lambda+e}$.  
There are $q^{(\lambda+e)\lambda}$ homomorphisms $P\ra N$, a proportion $\eta(\lambda+e)/\eta(e)$ of these are epimorphisms, and for any of these epimorphisms $g$ we have
$\mu(g)=(-1)^eq^{\binom{e}{2}}$.
We have $|\Aut(\F_q^\rho)|= q^{\rho^2} \eta(\rho)$.   So,
$$
\frac{\hat{\mu}(\F_q^\lambda,\F_q^{\lambda+e})}{|\Aut(\F_q^{\lambda+e})|}= \frac{ (-1)^eq^{\binom{e}{2}} q^{(\lambda+e)\lambda} \eta(\lambda+e)
  }{ \eta(e)q^{\lambda^2} \eta(\lambda) q^{(\lambda+e)^2} \eta(\lambda+e)} = \frac{ (-1)^eq^{-\frac{e^2}{2}-\frac{e}{2}-e\lambda-\lambda^2} 
  }{ \eta(e)\eta(\lambda)  } .
$$
Thus
\begin{align*}
v_{1,N}:= &\sum_{e\geq 0} \frac{\hat{\mu}(\F_q^\lambda,\F_q^{\lambda+e})}{|\Aut(\F_q^{\lambda+e})|} q^{\frac{(\lambda+e)^2+(\lambda+e)}{2}-t(\lambda+e)}
    = \frac{q^{-\binom{\lambda}{2}-t\lambda}}{\eta(\lambda)}
  \sum_{e\geq 0} \frac{ (-1)^e} 
  { \eta(e)}  q^{-te}.
\end{align*}
We see that the sum is absolutely convergent since $t>0$,
 and thus the moments are well-behaved as claimed.
By the $q$-binomial theorem for negative powers,
$$
\sum_{e\geq 0} \frac{v^e}{\eta(e)}=\prod_{i\geq 0}(1-vq^{-i})^{-1},
$$
and so
$$
v_{1,N}=\frac{q^{-\binom{\lambda}{2}-t\lambda}}{\eta(\lambda)} \prod_{i\geq 0}(1+q^{-i-t})^{-1}.
$$
\end{proof}

Now consider $R=\F_q\times \F_q$,  and we write $R$-modules as pairs of $\F_q$-vector spaces.
 If one has moments $M_{(N_1,N_2)}$ that factor as 
$M_{(N_1,N_2)}=M'_{N_1} M''_{N_2}$, then the sums defining well-behavedness and the $v_{\underline{k},N}$ all factor into two factors, one corresponding to each factor of $R$, reducing the moment problem for the moments $M_{(N_1,N_2)}$ to understanding the moment problems for $M'_N$ and $M''_N$.
However, not all such moments necessarily factor.

\begin{proposition}\label{P:dualpairsvs}
Let $q$ be a prime power and $R=\F_q\times \F_q$.
Let $s_1$ and $s_2$ be integers with $s_1+s_2\geq 0$.
 Let $M_{({\F_q}^{\rho},{\F_q}^{\pi})}=q^{\rho \pi-s_1\rho-s_2\pi}$ for all $\rho, \pi\in \mathbb{N}$.
If $-s_1\leq \lambda-\phi \leq s_2$,
let 
\begin{align*}
D_q(\lambda,\phi,s_1, s_2)&:=\frac{q^{-\lambda^2-\phi^2+\lambda\phi-s_1\lambda-s_2\phi}\eta(\infty)\eta(s_1+s_2)}{\eta(\lambda)\eta(\phi)\eta(s_2-\lambda+\phi)\eta(s_1+\lambda-\phi)}\\ &=
\frac{M_{({\F_q}^{\lambda},{\F_q}^{\phi})}}{|\Aut({\F_q}^{\lambda},{\F_q}^{\phi})|}
\frac{\eta(\infty)\eta(s_1+s_2)}{\eta(s_2-\lambda+\phi)\eta(s_1+\lambda-\phi)} 
\end{align*} 
and let  $D_q(\lambda,\phi,s_1, s_2)=0$ otherwise.
Then the values $M_N$ are well-behaved for $R$ and  are the moments of the measure $\nu$ (on isomorphism classes of pairs of $\F_q$-vector spaces) such that for all non-negative integers $\lambda,\phi,$
$$
\nu(({\F_q}^{\lambda}, {\F_q}^{\phi}))=
D_q(\lambda,\phi,s_1, s_2).
$$
\end{proposition}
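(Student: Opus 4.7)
The plan is to apply Theorem~\ref{T:mom} to $R=\F_q\times\F_q$. Both maximal ideals vanish, so the only relevant level is $\underline{k}=(1,1)$ and finite $R$-modules are just pairs of finite $\F_q$-vector spaces. Since $\hat\mu$ and $|\Aut|$ factor over the product decomposition of $R$, taking the product of the single-factor identity computed in the proof of Proposition~\ref{P:selfdualvs} yields
\[
\frac{\hat\mu((\F_q^\lambda,\F_q^\phi),(\F_q^{\lambda+e_1},\F_q^{\phi+e_2}))}{|\Aut(\F_q^{\lambda+e_1},\F_q^{\phi+e_2})|}
=\frac{(-1)^{e_1+e_2}\,q^{-\tfrac{e_1^2+e_1}{2}-e_1\lambda-\lambda^2-\tfrac{e_2^2+e_2}{2}-e_2\phi-\phi^2}}{\eta(e_1)\eta(e_2)\eta(\lambda)\eta(\phi)}.
\]
Multiplying by $M_P=q^{(\lambda+e_1)(\phi+e_2)-s_1(\lambda+e_1)-s_2(\phi+e_2)}$ and pulling out the $(e_1,e_2)$-independent prefactor $q^{-\lambda^2-\phi^2+\lambda\phi-s_1\lambda-s_2\phi}/(\eta(\lambda)\eta(\phi))$, the defining formula \eqref{E:defv} for $v_{(1,1),(\F_q^\lambda,\F_q^\phi)}$ becomes this prefactor times a double sum $S$ in $e_1,e_2$ whose $q$-exponent rearranges to $-\tfrac{(e_1-e_2)^2+e_1+e_2}{2}+e_1(\phi-\lambda-s_1)+e_2(\lambda-\phi-s_2)$.

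I would then evaluate the inner sum over $e_1$ by means of the $q^{-1}$-form of Euler's identity,
\[
\sum_{n\geq 0}\frac{(-1)^n z^n q^{-\binom{n}{2}}}{\eta(n)}=\prod_{i=0}^\infty\bigl(1-zq^{-i}\bigr),
\]
applied with $z=q^{e_2-L-1}$, where $L:=s_1+\lambda-\phi$. The right-hand product vanishes whenever $e_2>L$ (a factor $1-q^0$ appears) and telescopes to $\eta(\infty)/\eta(L-e_2)$ whenever $0\leq e_2\leq L$. This automatically truncates the outer sum to a finite range; if $L<0$, equivalently $\lambda-\phi<-s_1$, the range is empty, so $v=0$, covering one half of the support condition.

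With $M:=\phi-\lambda+s_2+1$, the truncated outer sum becomes $\frac{\eta(\infty)}{\eta(L)}\sum_{e_2=0}^L(-1)^{e_2}\binom{L}{e_2}_{q^{-1}}q^{-\binom{e_2}{2}-e_2M}$, and the finite $q^{-1}$-binomial theorem with $x=q^{-M}$ collapses this to $\frac{\eta(\infty)}{\eta(L)}\prod_{i=0}^{L-1}(1-q^{-(M+i)})$. When $M\geq 1$, i.e.\ $\lambda-\phi\leq s_2$, this product equals $\eta(M+L-1)/\eta(M-1)=\eta(s_1+s_2)/\eta(s_2+\phi-\lambda)$, and substituting back reproduces $D_q(\lambda,\phi,s_1,s_2)$ exactly. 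When $M\leq 0$ the factor at $i=-M$ vanishes (its index lies in $[0,L-1]$ since $L+M\geq s_1+s_2+1\geq 1$), so $v=0$, covering the other half of the support condition.

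The main obstacle is that because the moments $q^{\rho\pi-s_1\rho-s_2\pi}$ do not factor across the two copies of $\F_q$, the double sum cannot be split into a product of single sums. The saving observation is that the cross-term $e_1e_2$ arising from expanding $(\lambda+e_1)(\phi+e_2)$ is precisely what lets Euler's identity evaluate the inner sum and leaves behind a finite $q$-binomial sum in $e_2$ alone. Well-behavedness and nonnegativity of the $v_{\underline{k},N}$ are routine side checks: the quadratic $-\tfrac{(e_1-e_2)^2}{2}$ together with the linear decay $-n(1+s_1+s_2)$ along each fiber $e_1-e_2=d$ as $n=\min(e_1,e_2)\to\infty$ (using $s_1+s_2\geq 0$) gives absolute convergence of \eqref{E:defv}, and $D_q$ is manifestly nonnegative on its support as a ratio of positive $\eta$-factors times a positive $q$-power.
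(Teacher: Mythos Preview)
Your proof is correct and follows essentially the same approach as the paper's: set up the double sum from Theorem~\ref{T:mom}, evaluate one inner sum by Euler's identity (the $q$-binomial theorem for negative powers) which truncates the remaining variable to a finite range, then evaluate the resulting finite sum by the ordinary $q$-binomial theorem. The only cosmetic difference is that you sum over $e_1$ first (so the truncation cutoff is $L=s_1+\lambda-\phi$ and the first vanishing condition you see is $\lambda-\phi<-s_1$), whereas the paper sums over $f$ first (cutoff $s_2-\lambda+\phi$, first seeing the condition $\lambda-\phi>s_2$); your fiberwise argument for absolute convergence is likewise a minor repackaging of the paper's bound via the quadratic maximum.
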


If $s_1=s_2=0$, then the distribution is supported on the pairs such that $\lambda=\phi$, and on those reduces to the distribution on $\F_q$-vector spaces
whose moments are all $1$ (see \cite[Lemma 6.3]{Sawin2022}, a distribution which, at least in the $\F_p$-case, would usually be called the Cohen-Lenstra distribution on $\F_p$ vector spaces, since it is the distribution conjectured by Cohen and Lenstra \cite[Sec. 9 (C5)]{Cohen1984} to be the distribution of $p$-torsion of class groups of imaginary quadratic fields (for odd primes $p$).

\begin{proof}
With reasoning as in Proposition~\ref{P:selfdualvs}, we have
\begin{align*}
v_{\underline{1},({\F_q}^{\lambda}, {\F_q}^{\phi})}:= 
 &\sum_{\trye,\tryf\geq 0 } 
\frac{ (-1)^{\trye+\tryf}q^{-\frac{\trye^2}{2}-\frac{\trye}{2}-\trye\lambda-\lambda^2 -\frac{\tryf^2}{2}-\frac{\tryf}{2}-\tryf\phi-\phi^2 } 
  }{ \eta(\trye)\eta(\tryf)\eta(\lambda)\eta(\phi)  } q^{(\lambda+\trye)(\phi+\tryf)-s_1(\lambda+\trye)-s_2(\phi+\tryf)}
\\
 =&
\frac{q^{-\lambda^2-\phi^2+\lambda\phi-s_1\lambda-s_2\phi}}{\eta(\lambda)\eta(\phi)} 
 \sum_{\trye,\tryf\geq 0 } 
\frac{ (-1)^{\trye+\tryf}q^{-\frac{\trye^2}{2}-\frac{\trye}{2}-\lambda \trye  +\phi \trye -s_1\trye-\frac{\tryf^2}{2}-\frac{\tryf}{2}-\phi \tryf +\lambda \tryf-s_2\tryf+\trye\tryf }
  }{ \eta(\trye)\eta(\tryf)  }.
\end{align*}
We can check that this sum converges absolutely 
as follows.
If we fix $\trye$, we have an inner sum $\sum_{\tryf} q^{h(\tryf)/2}$, where $h$ is a quadratic polynomial with integer coefficients
that takes its maximum values (on integers) at $\tryf=\trye-\phi+\lambda+s_2, \trye-\phi+\lambda+s_2-1$.  Thus the inner sum is bounded by a constant times the summand for $\tryf=\trye-\phi+\lambda+s_2$, which then gives an upper bound for the entire (absolute) sum by a geometric series that converges if $s_1+s_2+1>0$.

Now, we evaluate the sum over $\tryf$ in the equation for $v_{\underline{1},({\F_q}^{\lambda}, {\F_q}^{\phi})}$ above using  the $q$-binomial theorem for negative powers and obtain
\begin{align*}
v_{\underline{1},({\F_q}^{\lambda}, {\F_q}^{\phi})}
 =&
\frac{q^{-\lambda^2-\phi^2+\lambda\phi-s_1\lambda-s_2\phi}}{\eta(\lambda)\eta(\phi)} 
 \sum_{\trye\geq 0 } 
\frac{ (-1)^{\trye}q^{-\frac{\trye^2}{2}-\frac{\trye}{2}-\lambda \trye  +\phi \trye -s_1\trye}}{ \eta(\trye)}
\prod_{k\geq 0}(1-q^{-\phi+\lambda+\trye-1-s_2-k}).
\end{align*}
This final product is $0$ if $-\phi+\lambda+\trye-1-s_2\geq 0$, and 
otherwise is $\eta(\infty)/\eta(s_2-\lambda+\phi-\trye)$.
If $\lambda-\phi>s_2$, then we have  $v_{\underline{1},({\F_q}^{\lambda}, {\F_q}^{\phi})}=0$ and otherwise
\begin{align*}
v_{\underline{1},({\F_q}^{\lambda}, {\F_q}^{\phi})}
 =&
\frac{q^{-\lambda^2-\phi^2+\lambda\phi-s_1\lambda-s_2\phi}\eta(\infty)}{\eta(\lambda)\eta(\phi)} 
 \sum_{\trye= 0 }^{s_2-\lambda+\phi} 
\frac{ (-1)^{\trye}q^{-\frac{\trye^2}{2}-\frac{\trye}{2}-\lambda \trye  +\phi \trye -s_1\trye}}{ \eta(\trye)\eta(s_2-\lambda+\phi-\trye)}.
\end{align*}
This final sum can be evaluated by the $q$-binomial theorem (for $n\geq 0$)
$$
\sum_{k=0}^{n} q^{-\binom{k}{2}} \frac{\eta(n)}{\eta(n-k)\eta(k)} t^k =\prod_{k=0}^{n-1} (1+q^{-k} t),
$$
and we obtain
\begin{align*}
v_{\underline{1},({\F_q}^{\lambda}, {\F_q}^{\phi})}
 =&
\frac{q^{-\lambda^2-\phi^2+\lambda\phi-s_1\lambda-s_2\phi}\eta(\infty)}{\eta(\lambda)\eta(\phi)\eta(s_2-\lambda+\phi)} 
 \prod_{k=0}^{s_2-\lambda+\phi-1}(1-q^{-k-1-\lambda+\phi-s_1}).
\end{align*}
If  $\lambda-\phi <-s_1$, then we can check that one of the factors in the product above is zero, and otherwise we have
$$
 \prod_{k=0}^{s_2-\lambda+\phi-1}(1-q^{-k-1-\lambda+\phi-s_1})=\eta(s_1+s_2)/\eta(s_1+\lambda-\phi),
$$
which gives the proposition.
\end{proof}

\section{Distributions on isomorphism classes of modules over an unramified DVR}\label{S:DVR}
In this section, we compute some more distributions from their moments that will be important for our eventual class group conjectures.

Let $q$ be a power of a prime $p$, and throughout this section let $R$ be the ring of integers in the unramified extension of $\Q_p$ with residue field $\F_q$. Let $\mathfrak{m}$ be the maximal ideal of $R$.
For a partition $\lambda$, let $w(\lambda)=\sum_i (i-1)\lambda_i=\sum_j \binom{\lambda'_j}{2}=|\wedge^2_R N_\lambda|$.  We write $(1^e)$ to denote the partition with $e$ $1$'s.  We define $|\lambda|:=\sum_i \lambda_i$.
We write $N^{\leq k}:=N/\mathfrak{m}^k$

\begin{proposition}\label{P:DVRselfdual}
Let $q,R$ be as above.  Let $s,\epsilon$ be real numbers such that $s\geq 0$ and $s-\epsilon\geq 0$, and $r$ a positive integer, and let $M_{N_{\rho'}}=|\wedge_R^2 N_{\rho'}[p^r]| |N_{\rho'}[p^r]|^\epsilon
|N_{\rho'}|^{-s} =q^{\sum_{j=1}^{r} \left( \binom{\rho_j}{2} +\epsilon \rho_j \right)-s|\rho|}$ for all partitions $\rho$.
We define
$$
\Pf_q(t,m):=\sum_{e=0}^{m}
\frac{ 
(-1)^{ e} 
q^{-(t+1)e }\eta(m)
}
{ \eta(e)\eta(m-e)}.
$$
Then $\Pf_q(t,m)$ is positive for all real $t \geq 0$ and $m\in \mathbb N$.  Further, the values
$M_N$ are well-behaved for $R$ and  are the moments of the measure $\nu$ such that 
\begin{align*}
 \nu(\{P \,|\, P^{\leq k} \isom N_{\lambda'}\} 
) =\frac{q^{\sum_j \left(-\binom{\lambda_j}{2}-(s-\epsilon+1)\lambda_j\right) 
+\sum_{j\geq r+1} \left(-\binom{\lambda_j}{2}-\epsilon \lambda_j\right)}
}{\prod_{j\geq 2 }\eta(\lambda_{j-1}-\lambda_j)
} 
\left(\prod_{i\geq 0}(1+q^{-s+\epsilon-1-i})^{-1}  \right) & \\
 \times
\prod_{j= 2}^{\min(k,r) } 
\Pf_q(s-\epsilon,\lambda_{j-1}-\lambda_j)
\prod_{i=  \lambda_{k}+1}^{  \lambda_{\min(k,r)} } (1 - q^{ - s- i } )& \\
=\frac{M_{N_{\lambda'}}}
{|\Aut(N_{\lambda'})|} 
\prod_{i\geq 0}(1+ q^{-s+\epsilon-1-i})^{-1} 
\prod_{j= 2}^{\min(k,r) } 
\Pf_q(s-\epsilon,\lambda_{j-1}-\lambda_j)
\prod_{i=  \lambda_{k}+1}^{  \lambda_{\min(k,r)} } (1 - q^{ - s- i } )&
\end{align*}
for all $k\geq 1$ and $\lambda$ with $\lambda_{k+1}=0$.  This measure is supported on finite $R$-modules.  Finally, $\nu(\{N_{\lambda'}\})$ is given by the above expression for any $k>\lambda'_1$.
\end{proposition}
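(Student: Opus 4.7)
The plan is to apply Theorem~\ref{T:mom} directly: establish well-behavedness, compute an explicit formula for $v_{\underline{k}, N}$, and verify non-negativity. The claimed formula then follows from the uniqueness clause of Theorem~\ref{T:mom}, and support on finite $R$-modules follows from the visible stabilization of $\nu(\{P : P^{\leq k} \cong N_{\lambda'}\})$ in $k$.

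For the main computation, I fix $N = N_{\lambda'}$ and parametrize the finite $R/\mathfrak{m}^k$-modules $P$ admitting a surjection $P \to N$ with semi-simple kernel: these correspond (by a Pieri-type argument in the Hall algebra) to partitions $\pi \supseteq \lambda$ with $\pi/\lambda$ a strip having at most one box per row, subject to $\pi_1 \leq k$. Substituting the standard formulas for $|\Sur_R(P,N)|$, $|\Aut_R(P)|$, $M_P$, and $\mu(g) = (-1)^e q^{\binom{e}{2}}$ (where $e$ is the size of the strip), the resulting expression for $v_{\underline{k}, N}$ is a sum over such $\pi$. After careful rearrangement, this factors across the levels $j=1,\dots,k$ of the partition. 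At each interior level $2 \leq j \leq \min(k, r)$, applying the $q$-Pascal identity $\binom{m}{e}_{q^{-1}} = \binom{m-1}{e-1}_{q^{-1}} + q^{-e}\binom{m-1}{e}_{q^{-1}}$ collapses the inner sum into a factor of $\Pf_q(s-\epsilon, \lambda_{j-1} - \lambda_j)$; for $r < j \leq k$ the $\epsilon$-contribution in $M_P$ is absent and the corresponding sums telescope into the boundary product $\prod_{i = \lambda_k + 1}^{\lambda_{\min(k,r)}}(1 - q^{-s-i})$. The top (untruncated) level, evaluated via the $q$-binomial theorem for negative powers, yields the infinite Euler product $\prod_{i \geq 0}(1 + q^{-s+\epsilon-1-i})^{-1}$. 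Absolute convergence at each level, hence well-behavedness, is immediate from $s \geq 0$ and $s - \epsilon \geq 0$; stabilization for $k > \lambda'_1$ is visible because all $k$-dependent factors trivialize once $\lambda_k = 0$, and combined with Theorem~\ref{T:mom} this gives the support-on-finite-modules claim.

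The main remaining obstacle is the positivity of $\Pf_q(t, m)$, which admits no obvious product formula. My plan is to derive from the $q$-binomial theorem the generating function identity
$$
G_t(x) := \sum_{m \geq 0} \frac{\Pf_q(t, m)}{\eta(m)} x^m = \prod_{i \geq 0} \frac{1}{(1 + q^{-t-1-i}x)(1 - q^{-i}x)},
$$
from which the telescoping $(1 + q^{-t-1}x)\, G_t(x) = G_{t+1}(x)$ is immediate. Extracting the coefficient of $x^m$ yields the clean recursion $T_m(t+1) = T_m(t) + q^{-t-1}\, T_{m-1}(t)$ for $T_m(t) := \Pf_q(t, m)/\eta(m)$, so $T_m$ is increasing in $t$ with limit $T_m(\infty) = 1/\eta(m)$. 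The telescoped identity $T_m(t) = 1/\eta(m) - \sum_{s \geq t} q^{-s-1}\, T_{m-1}(s)$ then drives an induction on $m$: the inductive upper bound $T_{m-1}(s) \leq 1/\eta(m-1)$ gives $T_m(t) \geq 1/\eta(m) - q^{-t}/((q-1)\eta(m-1))$, which is positive for all $q \geq 2$ and $t \geq 0$ since $(q-1)/(1 - q^{-m}) \geq 1$.
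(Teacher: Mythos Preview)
Your main computation via Theorem~\ref{T:mom} follows the paper's approach: parametrize $P$ by strips over $\lambda$, factor the resulting sum over levels $j$, and evaluate each factor. (The mention of a $q$-Pascal identity at interior levels is unnecessary: after factoring, the sum at level $2\leq j\leq\min(k,r)$ is literally the definition of $\Pf_q(s-\epsilon,\lambda_{j-1}-\lambda_j)/\eta(\lambda_{j-1}-\lambda_j)$, so no further identity is needed to recognize it. For $j>\min(k,r)$ the paper evaluates the finite $e_j$-sum by the ordinary $q$-binomial theorem rather than ``telescoping,'' but you arrive at the same product.)

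Your positivity argument for $\Pf_q(t,m)$ is genuinely different from the paper's and correct. The paper simply checks that $\Pf_q(t,m)$ is an alternating sum whose terms strictly decrease in absolute value: one computes $|a_{e+1}/a_e| = q^{-t-1}(1-q^{-(m-e)})/(1-q^{-e-1}) < q^{-1}/(1-q^{-1}) \leq 1$, and the first term is positive. This is a one-line argument. Your generating-function route, yielding the recursion $T_m(t+1) = T_m(t) + q^{-t-1}T_{m-1}(t)$ and the explicit lower bound $T_m(t) \geq 1/\eta(m) - q^{-t}/((q-1)\eta(m-1))$, is longer but gives more structural information and a quantitative bound; it would be the better approach if one later needed estimates rather than mere positivity.

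There is a small gap in your finite-support argument. Stabilization of $\nu(\{P : P^{\leq k}\cong N_{\lambda'}\})$ in $k$, for a \emph{fixed finite} $N_{\lambda'}$, only computes $\nu(\{N_{\lambda'}\})$; it does not by itself show that infinite modules have measure zero, and nothing in Theorem~\ref{T:mom} supplies this. You still need the observation (which the paper makes explicitly) that for an infinite module $N=\hat{R}^\ell\times T$ with $\ell\geq 1$, the truncations $N^{\leq k}$ correspond to partitions with $\lambda(k)_j=\ell$ for all large $j\leq k$, and then the formula gives $\nu(\{P : P^{\leq k}\cong N^{\leq k}\})\to 0$ as $k\to\infty$ (the exponent of $q$ in $M_{N_{\lambda(k)'}}/|\Aut N_{\lambda(k)'}|$ goes to $-\infty$). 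This is easy to add but should be stated.
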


By convention, the product from $\lambda_k+1$ to $\lambda_{\min(k,r)}$  is $1$ if $\lambda_k =\lambda_{\min(k,r)}$.  
The result also is proven below for the case $r=\infty$, with the natural interpretations.

\begin{proof} 
If we let $a_e=\frac{ 
q^{-(t+1)e }
}
{ \eta(e)\eta(m-e)}$, we note that 
$$\frac{a_{e+1}}{a_e}=\frac{q^{-t-1}(1-q^{-m+e})}{(1-q^{-e-1})} <q^{-1} \frac{1}{1-q^{-1}}
\leq 1,
$$
since $t\geq0$ and $e\geq 0$ and $q\geq 2$.  So $\Pf_q(t,m)$ is an alternating series whose terms are decreasing in absolute value and whose first term is positive, which implies $\Pf_q(t,m)> 0.$

We now apply Theorem~\ref{T:mom}.
We let $N=N_{\lambda'}$.   Given a partition $\rho$, the number of surjections
$N_{\rho'} \ra N_{\lambda'}$ with simple kernel of type $(1^e)$, up to automorphisms of $N_{\lambda'}$, is 
$$
q^{\sum_j \binom{\rho_j}{2} -\sum_j \binom{\lambda_j}{2}-\binom{e}{2}} \prod_{j\geq 1} \frac{\eta(\rho_j-\rho_{j+1})}{\eta(\rho_j-\lambda_j)\eta(\lambda_j-\rho_{j+1})}
$$
if $|\rho|=|\lambda|+e$ and $\rho_j-\lambda_j,\lambda_j-\rho_{j+1}\geq 0$ for all $j\geq 1$; 
 and $0$ otherwise \cite[Ch. II (4.6)]{Macdonald2015}.  Each of these surjections has $\mu(N_{\lambda'},N_{\rho'})=(-1)^eq^{\binom{e}{2}}$.  
Let $\rho'$ contain $m_j$ $j$'s (so $m_j=\rho_j-\rho_{j+1}$ and $\rho_j=m_j+m_{j+1}+\dots$).
We have 
$|\Aut(N_{\rho'})|=q^{\sum_j \rho_j^2} \prod_j{\eta(m_j)}$ (\cite[II (1.6)]{Macdonald2015}). 
We have $|\wedge_R^2 N_{\rho'}[p^r]|=q^{\sum_{j=1}^{r} \binom{\rho_j}{2}}$.
Letting $\rho_j=\lambda_j+e_j$, we have
\begin{align*}
v_{k, N_{\lambda'}}&=\sum_{\substack{\trye_1\dots,e_k\geq 0\\
\forall i\geq 1: e_{i+1}\leq \lambda_i-\lambda_{i+1}
}} \frac{q^{\sum_j \binom{\rho_j}{2} -\sum_j \binom{\lambda_j}{2}-\binom{e}{2}}\prod_{j\geq 1} \frac{\eta(\rho_j-\rho_{j+1})}{\eta(\rho_j-\lambda_j)\eta(\lambda_j-\rho_{j+1})} (-1)^{e_j}q^{\binom{e_j}{2}} q^{\sum_{j=1}^{r} \left( \binom{\rho_j}{2} +\epsilon \rho_j \right)-s|\rho|} }
{q^{\sum_j (\rho_j)^2} \prod_{j\geq 1}{\eta(m_j)}}
\\
&=\sum_{\substack{\trye_1\dots,e_k\geq 0\\
\forall i\geq 1:e_{i+1}\leq \lambda_i-\lambda_{i+1}
}}  
\frac{ (-1)^{\sum_j e_j} q^{\sum_j \left(2\binom{\lambda_j+e_j}{2}-\binom{\lambda_j}{2}-(s-\epsilon)(\lambda_j+e_j)\right) 
-\sum_{j> r} \left(\binom{\lambda_j+e_j}{2} +\epsilon(\lambda_j+e_j) \right)
}
}
{q^{\sum_j (\lambda_j+e_j)^2} \prod_{j\geq 1}  \eta(e_j)\eta(\lambda_j-\lambda_{j+1}-e_{j+1})}
\\
&=\sum_{\substack{\trye_1\dots,e_k\geq 0\\
\forall  i \geq 1:e_{i+1}\leq \lambda_i-\lambda_{i+1}
}} 
\frac{ (-1)^{\sum_j e_j} q^{\sum_j \left(-\binom{\lambda_j}{2}-(s-\epsilon+1)\lambda_j-(s-\epsilon+1)e_j\right) -\sum_{j> r}  \left(\binom{\lambda_j+e_j}{2} +\epsilon(\lambda_j+e_j) \right)}
}
{\prod_{j\geq 1} \eta(e_j)\eta(\lambda_j-\lambda_{j+1}-e_{j+1})}\\
&=
\frac{q^{\sum_j \left(-\binom{\lambda_j}{2}-(s-\epsilon+1)\lambda_j\right) }}{\eta(\lambda_k)}
\left( \sum_{\trye_1\geq 0} (-1)^{\trye_1} \frac{q^{-(s-\epsilon+1)\trye_1}}{\eta(\trye_1)}  \right)
\prod_{j=2}^{\min(k,r)} 
\sum_{e_j=0}^{\lambda_{j-1}-\lambda_j}
\frac{ 
(-1)^{ e_j} 
q^{-(s-\epsilon+1)e_j }
}
{ \eta(e_j)\eta(\lambda_{j-1}-\lambda_{j}-e_{j})}\\
& \times 
 \prod_{j=
 \min(k,r)+1}^k \sum_{e_j=0}^{\lambda_{j-1}-\lambda_j}
\frac{ 
(-1)^{ e_j} 
q^{-(s-\epsilon+1)e_j -\binom{\lambda_j+e_j}{2} -\epsilon(\lambda_j+e_j)} 
}
{ \eta(e_j)\eta(\lambda_{j-1}-\lambda_{j}-e_{j})}
.
\end{align*}
The above sums are absolutely convergent when $s-\epsilon+1>0$, so we have well-behavedness. 
We use the $q$-binomial theorem for negative powers for the $e_1$ sum.
For $2\leq j \leq \min(k,r)$, the sum over $e_j$ is equal to $\Pf_q(s-\epsilon,\lambda_{j-1}-\lambda_j)/\eta(\lambda_{j-1}-\lambda_j)$.
Considering the sum over $e_j$ for $j> \min(k,r)$, we have 
\begin{align*}
\sum_{e_j=0}^{\lambda_{j-1}-\lambda_j}
\frac{ 
(-1)^{ e_j} 
q^{-(s-\epsilon+1)e_j -\binom{\lambda_j+e_j}{2}-\epsilon(\lambda_j+e_j)}
}
{ \eta(e_j)\eta(\lambda_{j-1}-\lambda_{j}-e_{j})} &=
q^{-\binom{\lambda_j}{2} -\epsilon\lambda_j}
\sum_{e_j=0}^{\lambda_{j-1}-\lambda_j}
\frac{ 
(-q^{-s-1-\lambda_j})^{ e_j} 
q^{-\binom{e_j}{2}} 
}
{ \eta(e_j)\eta(\lambda_{j-1}-\lambda_{j}-e_{j})} 
\\
&=
\frac{q^{-\binom{\lambda_j}{2}-\epsilon\lambda_j}}{\eta(\lambda_{j-1}-\lambda_j)}
\prod_{i=0}^{\lambda_{j-1}-\lambda_j-1}(1-q^{-s-1-\lambda_j  -i})
\end{align*}
by the $q$-binomial theorem. 
We can combine the products above over different $j$ and have
$$
\prod_{j= \min(k,r)+1}^k
\prod_{i=0}^{\lambda_{j-1}-\lambda_j-1}(1-q^{-s-1-\lambda_j  -i})=
\prod_{i=  \lambda_k+1}^{  \lambda_{ \min(k,r)} } (1 - q^{ - s- i } ).
$$
We put these identities all together, pull out all the powers of $q$, and collect the $\eta(\lambda_{j-1}-\lambda_j)$ terms in the denominator
to  obtain the equation for $\nu(\{P \,|\, P^{\leq k}   \isom N_{\lambda'}\} 
)$ in the proposition.

There are countably many finitely generated $\hat{R}$-modules, and we will show each infinite one occurs with probability $0$.  
Let $N$ be a finitely generated $\hat{R}$-module and define $\lambda(k)$ so that $N_{\lambda(k)'}=N^{\leq k}$.  In particular, 
if $N=\hat{R}^\ell \times T$ for some $T$ with $\mathfrak{m}^{k_0}T=0$, then for $k\geq k_0$,
we have that 
$\lambda(k)_j$ does not depend on $k$ for $j\leq k_0$,
and  $\lambda(k)_j=\ell$ for $k_0<j\leq k$.
So,  if $\ell\geq 1$, as $k\ra\infty$,  we have $
\nu(\{P \,|\, P^{\leq k}\isom N_{\lambda(k)'}\}
)\ra 0$, which implies $\nu(\{N\})=0$.  Thus the measure is supported on finite modules.  

If $\lambda'_1<k$, then the only finitely generated $\hat{R}$-module $P$ with $P^{\leq k} \isom N_{\lambda'}$ is $P=N_{\lambda'}$, which gives the final statement.
\end{proof}

We write $\Pf(t,m):=\Pf_q(t,m)$ when the value of $q$ is clear.  While we do not have a general product formula for $\Pf(t,m)$, we do have one when $t=0$.

\begin{lemma}\label{R:e0}
For $m$ a  non-negative integer,
$$
\Pf(0,m)=\prod_{j=1}^{\lceil m/2 \rceil} (1-q^{-(2j-1)}).
$$
\end{lemma}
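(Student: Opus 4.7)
The plan is to prove the identity by induction on $m$, coupled with an auxiliary sum. Setting $\binom{m}{e}_{q^{-1}}:=\eta(m)/(\eta(e)\eta(m-e))$ for the Gaussian binomial, the definition reads
$$\Pf(0,m) \;=\; \sum_{e=0}^m(-1)^e q^{-e}\binom{m}{e}_{q^{-1}},$$
and I will carry along the companion quantity
$$\Phi_m \;:=\; \sum_{e=0}^m(-1)^e\binom{m}{e}_{q^{-1}},$$
i.e.\ the formal value of $\Pf_q(-1,m)$.

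The first step is to extract two recurrences from the two standard forms of the $q$-Pascal identity. Applying $\binom{m}{e}_{q^{-1}}=q^{-(m-e)}\binom{m-1}{e-1}_{q^{-1}}+\binom{m-1}{e}_{q^{-1}}$ inside the sum for $\Pf(0,m)$, the factor $q^{-e}\cdot q^{-(m-e)}=q^{-m}$ is constant in $e$, and after the shift $e\mapsto e+1$ in the first piece one obtains
$$\Pf(0,m) \;=\; \Pf(0,m-1) - q^{-m}\Phi_{m-1}.$$
Using the complementary form $\binom{m}{e}_{q^{-1}}=\binom{m-1}{e-1}_{q^{-1}}+q^{-e}\binom{m-1}{e}_{q^{-1}}$ inside $\Phi_m$ yields
$$\Phi_m \;=\; -\Phi_{m-1} + \Pf(0,m-1).$$

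The second step is the induction itself. I will prove simultaneously that $\Pf(0,m)$ equals the claimed product $\prod_{j=1}^{\lceil m/2\rceil}(1-q^{-(2j-1)})$ and that $\Phi_m$ equals $\prod_{j=1}^{m/2}(1-q^{-(2j-1)})$ when $m$ is even and $0$ when $m$ is odd. The base case $m=0$ gives $\Pf(0,0)=\Phi_0=1$. For odd $m\geq 1$, the inductive hypothesis makes $\Phi_{m-1}=\Pf(0,m-1)$, so the first recurrence yields $\Pf(0,m)=(1-q^{-m})\Pf(0,m-1)$, which appends the new factor $1-q^{-m}=1-q^{-(2\lceil m/2\rceil-1)}$, while the second recurrence forces $\Phi_m=0$. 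For even $m\geq 1$, the hypothesis gives $\Phi_{m-1}=0$, so $\Pf(0,m)=\Pf(0,m-1)$ (consistent with $\lceil m/2\rceil=\lceil (m-1)/2\rceil$) and $\Phi_m=\Pf(0,m-1)$ produces exactly the required product for $\Phi_m$.

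There is no substantive obstacle: the only care needed is sign- and index-bookkeeping under the shift $e\mapsto e-1$. In particular, no deeper $q$-series input (Jacobi triple product, $q$-Gauss summation, etc.) is required; the coupled pair of recurrences closes the induction on its own.
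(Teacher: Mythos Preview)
Your proof is correct and follows essentially the same route as the paper. The paper also introduces the companion quantity $\Pf(-1,m)$ (your $\Phi_m$), derives the same pair of recurrences from the two $q$-Pascal identities, and closes by the same coupled induction; the only cosmetic difference is that the paper first writes the recurrences for general $t$ and then specializes, and observes $\Pf(-1,m)=0$ for odd $m$ directly from the $e\leftrightarrow m-e$ symmetry rather than extracting it from the recurrence.
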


\begin{proof}There are two forms of Pascal's identity for the $q$-binomial coefficients.

These are, for 
$1\leq e \leq m-1$,
  \[\frac{ \eta(m)}{ \eta(e) \eta(m-e)}  =  q^{-e} \frac{ \eta (m-1)}{ \eta(e) \eta(m-1-e)} +   \frac{ \eta (m-1)}{ \eta(e-1) \eta(m-e)} \] and  \[\frac{ \eta(m)}{ \eta(e) \eta(m-e)}  =  \frac{ \eta (m-1)}{ \eta(e) \eta(m-1-e)} +   q^{ - (m-e) } \frac{ \eta (m-1)}{ \eta(e-1) \eta(m-e)} .\] They can both be verified immediately by expanding the products. Plugged into \[\Pf(t, m) =\sum_{e=0}^m  (-1)^e q^{-  (t+1) e} \frac{ \eta(m)}{ \eta(e) \eta(m-e)} \] these give the recurrences
\[ \Pf(t,m)  = \Pf( t+1, m-1) -   q^{ - (t+1) } \Pf(t, m-1) \]
and
\[\Pf(t, m) = \Pf( t, m-1) -   q^{-m-t } \Pf( t-1, m-1) \]

Specializing we get
\begin{equation}\label{first-recurrence} \Pf(-1, m) = \Pf(0,m-1)  - \Pf(-1, m-1)\end{equation}
\begin{equation} \label{second-recurrence} \Pf(0, m) = \Pf(0,m-1) - q^{-m} \Pf(-1,m-1). \end{equation}

We have $\Pf(-1,m)=0$ for $m$ odd since the $e$ term cancels the $m-e$ term. That, together with \eqref{first-recurrence}, gives 
\[ \Pf(0,2k) = \Pf( -1, 2k) + \Pf(-1, 2k+1) = \Pf(-1, 2k) = \Pf(-1, 2k) +\Pf(-1, 2k-1)  = \Pf(0, 2k-1)\]
 which together with \eqref{second-recurrence} for $m=2k-1$ gives 
\begin{align*}
&\Pf(-1, 2k)=  \Pf(0,2k-1) =\Pf(0,2k-2) - q^{-(2k-1) } \Pf(-1,2k-2) \\= &\Pf(-1,2k-2) - q^{-(2k-1)} \Pf(-1, 2k-2)=(1- q^{-(2k-1)} ) \Pf(-1,2k-2)
\end{align*} 

so by induction and $\Pf(-1,0)=1$ we have \[\Pf(0,2k)= \Pf(0,2k-1) = \Pf(-1, 2k) =\prod_{j=1}^k ( 1- q^{-(2j-1)}) \] which gives the stated formula.\end{proof}

Before we consider our next moments, we verify that a certain $q$-series is always non-negative.

\begin{lemma}\label{L:Q}
For integers $\lambda_{-1}, \lambda, \phi_{-1}, \phi,s_1,s_2$,
with $\lambda_{-1}\geq \lambda$ and $\phi_{-1}\geq \phi$, 
 we define
\begin{align*} &\Qf_q(\lambda_{-1}, \lambda, \phi_{-1}, \phi,s_1,s_2):=\\
&\sum_{e=0}^{ \lambda_{-1}-\lambda} \sum_{f=0}^{ \phi_{-1}-\phi}
\frac{ (-1)^{e+f} q^{\binom{\lambda+e}{2}-\binom{\lambda}{2}-s_1(\lambda+e)
+\binom{\phi+f}{2}-\binom{\phi}{2}-s_2(\phi+f) 
 + (\lambda+e)(\phi+f)
}}
{q^{(\lambda+e)^2+(\phi+f)^2}  \eta(e)\eta(\lambda_{-1}-\lambda-e)
\eta(f)\eta(\phi_{-1}-\phi-f)}.
\end{align*}
If $\phi_{-1}-\lambda_{-1}\leq s_1$ and $\lambda_{-1} -\phi_{-1}\leq s_2$, then
\begin{enumerate}
\item $\Qf_q(\lambda_{-1}, \lambda, \phi_{-1}, \phi,s_1,s_2)\geq 0$, and
\item $\Qf_q(\lambda_{-1}, \lambda, \phi_{-1}, \phi,s_1,s_2)$ is positive if and only if
$\phi-\lambda\leq s_1$ and $\lambda -\phi\leq s_2$.
\end{enumerate}
\end{lemma}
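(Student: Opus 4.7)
My plan is to evaluate the double sum defining $\Qf_q$ in closed form: first apply the $q$-binomial theorem to the inner sum over $e$, then use the hypotheses on $s_1, s_2$ to truncate the outer sum to a range where a $q$-hypergeometric summation identity applies, and finally read off non-negativity and the positivity criterion from the resulting product formula.

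For the first step, writing $\pi = \phi + f$ and collecting the $e$-dependent portion of the exponent of $q$, I would apply the $q$-binomial theorem exactly as in the proof of Proposition~\ref{P:dualpairsvs} to write the inner sum as $\frac{1}{\eta(\alpha)}\prod_{k=1}^{\alpha}(1 - q^{\pi - \lambda - s_1 - k})$ times an explicit $f$-dependent prefactor. The key structural fact is that this product vanishes whenever $\pi \in [\lambda + s_1 + 1, \lambda_{-1} + s_1]$. Under the hypothesis $\phi_{-1} - \lambda_{-1} \leq s_1$ we have $\phi_{-1} \leq \lambda_{-1} + s_1$, so the portion $\pi \in [\lambda + s_1 + 1, \phi_{-1}]$ of the summation range (if nonempty) contributes zero, and on the complementary range $\pi \in [\phi, \min(\phi_{-1}, \lambda + s_1)]$ the product rewrites cleanly as $\eta(s_1 + \lambda_{-1} - \pi)/\eta(s_1 + \lambda - \pi)$.

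The remaining task is to evaluate the single sum over $\pi$, a terminating $q$-hypergeometric series. I would try to match it to a standard summation such as $q$-Chu--Vandermonde or Pfaff--Saalsch\"utz; the expected output is a closed-form expression written as a ratio of $\eta$'s evaluated at arguments built from $\lambda, \phi, \lambda_{-1}, \phi_{-1}, s_1, s_2$, analogous in spirit to the formula $D_q$ obtained in Proposition~\ref{P:dualpairsvs}. Direct verification in small cases (for instance $\alpha = 2, \beta = 1, \lambda = \phi = 0, s_1 = s_2 = 1$ yields $\eta(2)/\eta(1)^2 > 0$, while $\alpha = 2, \beta = 0, \lambda = 0, \phi = 2, s_1 = s_2 = 1$ yields $0$) serves as a useful cross-check that also illustrates both sides of part (2).

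From such a closed form, non-negativity is immediate: each factor $(1 - q^{-n})$ with $n > 0$ lies in $(0, 1)$ and the prefactor is positive, so $\Qf_q \geq 0$. For part (2), the expression vanishes exactly when a factor $(1 - q^{0}) = 0$ is forced, which by tracking the derivation happens precisely when $\phi - \lambda > s_1$ or $\lambda - \phi > s_2$. The main obstacle I anticipate is the evaluation step: matching the outer sum to a standard identity is nontrivial because the inner product contributes an $f$-dependent ratio of $\eta$'s, so it may be necessary to rewrite the sum in a balanced ${}_3\phi_2$ form, or invoke contiguous relations, before a known identity gives a clean product.
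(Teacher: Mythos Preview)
Your first two steps are correct and match the paper: applying the $q$-binomial theorem to one of the two summations, and using the hypothesis $\phi_{-1}-\lambda_{-1}\leq s_1$ (or $\lambda_{-1}-\phi_{-1}\leq s_2$) to see that the resulting product kills all summands beyond a certain index. This is exactly how the paper obtains the vanishing in part (2) when $\phi-\lambda>s_1$ or $\lambda-\phi>s_2$.

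The gap is your third step. The closed product formula you are hoping for does not exist. Take $\lambda_{-1}=\phi_{-1}=1$, $\lambda=\phi=0$, $s_1=s_2=1$. A direct computation gives
\[
\Qf_q(1,0,1,0,1,1)=\frac{1-2q^{-2}+q^{-3}}{\eta(1)^2}=\frac{1+q^{-1}-q^{-2}}{1-q^{-1}},
\]
and the polynomial $1+q^{-1}-q^{-2}$ is irreducible over $\Q$ (its roots in $q^{-1}$ are $(1\pm\sqrt{5})/2$), so no factorization into terms of the form $(1-q^{-n})$ is possible. Thus the remaining single sum is not a $q$-Chu--Vandermonde or Pfaff--Saalsch\"utz evaluation, and you cannot read off positivity from a product. (Your numerical check $\eta(2)/\eta(1)^2$ for the case $\alpha=2,\beta=1$ is also slightly off; the correct value there is $(1+q^{-1})/(1-q^{-1})$.)

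What the paper does instead, and what you are missing, is an alternating-series argument on the remaining single sum: one verifies that the terms $a_e$ alternate in sign and satisfy $|a_{e+1}|/|a_e|<1$ (using $s_1+\lambda-\phi\geq 0$ and $q\geq 2$), so the sum is bounded below by $a_0-a_1>0$. There is a small boundary case when the exponent controlling the ratio is only $-1$, which forces $e=0$ and $s_1+\lambda-\phi=0$; the paper then observes by symmetry that either the same works with roles swapped, or both $s_1+\lambda-\phi=0$ and $s_2+\phi-\lambda=0$, in which case only the $e=0$ term survives and is positive.
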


\begin{proof}
We rearrange to obtain
\begin{align*} &\Qf_q(\lambda_{-1}, \lambda, \phi_{-1}, \phi,s_1,s_2)\\
&=\sum_{e=0}^{ \lambda_{-1}-\lambda} \sum_{f=0}^{ \phi_{-1}-\phi}
\frac{ (-1)^{e+f} q^{\binom{\lambda+e}{2}-\binom{\lambda}{2}-s_1(\lambda+e)
+\binom{\phi+f}{2}-\binom{\phi}{2}-s_2(\phi+f) 
 + (\lambda+e)(\phi+f)
}}
{q^{(\lambda+e)^2+(\phi+f)^2}  \eta(e)\eta(\lambda_{-1}-\lambda-e)
\eta(f)\eta(\phi_{-1}-\phi-f)}\\
&=
q^{-{\lambda^2}-s_1\lambda-\phi^2 -s_2\phi  +\lambda\phi
}
\sum_{e=0}^{ \lambda_{-1}-\lambda} \sum_{f=0}^{ \phi_{-1}-\phi}
\frac{ (-1)^{e+f} q^{
-\binom{e}{2} +e(-\lambda-s_1+\phi-1) 
-\binom{f}{2} +f(-\phi-1-s_2+\lambda+e)
}}
{ \eta(e)\eta(\lambda_{-1}-\lambda-e)
\eta(f)\eta(\phi_{-1}-\phi-f)}.
\end{align*}

Now we apply the $q$-binomial theorem to the sum in the $f$, and obtain that the above sum is equal to
\begin{align*}
q^{-{\lambda^2}-s_1\lambda-\phi^2 -s_2\phi  +\lambda\phi
}
\sum_{e=0}^{ \lambda_{-1}-\lambda} 
\frac{ (-1)^{e} q^{
-\binom{e}{2} +e(-\lambda-s_1+\phi-1) 
}}
{ \eta(e)\eta(\lambda_{-1}-\lambda-e)\eta(\phi_{-1}-\phi)
}
\prod_{i=0}^{ \phi_{-1}-\phi-1} (1-q^{-\phi-1-s_2+\lambda+e -i})
.
\end{align*}

If $e\geq s_2-\lambda+\phi+1,$ then the $i=-\phi-1-s_2+\lambda+e$ term in the product
exists because
$$
0 \leq -\phi-1-s_2+\lambda+e 
\leq -\phi-1-s_2+\lambda_{-1} 
\leq \phi_{-1}-\phi-1.
$$
This term is $(1-q^{0})=0$, and thus the summand for $e\geq s_2-\lambda+\phi+1$ is $0$.

If $\lambda-\phi>s_2$, in then follows that every summand is $0$
and
$\Qf_q(\lambda_{-1}, \lambda, \phi_{-1}, \phi,s_1,s_2)=0$.  Similarly, if 
$\phi-\lambda>s_1$ we also have $\Qf_q(\lambda_{-1}, \lambda, \phi_{-1}, \phi,s_1,s_2)=0$. 

For the rest of the proof we assume that $\phi-\lambda\leq s_1$ and $\lambda -\phi\leq s_2$, and we only consider summands with
$e\leq s_2-\lambda+\phi$ as the others are $0$.
In this case the exponents of $q$ in the product are  
$$
-\phi-1-s_2+\lambda+e -i
\leq -1
$$
and so the only non-positive factor in each summand is the $(-1)^{e}$.

If we let $a_{e}$ be the $e$ summand in our sum, then we can compute
$$
\frac{|a_{e+1}|}{|a_{e}|}=q^{-e -\lambda-s_1+\phi-1 }\frac{(1-q^{\lambda_{-1}-\lambda-e})
}{(1-q^{-e-1})}
\frac{(1-q^{-\phi-1-s_2+\lambda+e +1})}{(1-q^{-s_2+\lambda+e -\phi_{-1}})}.
$$
For $0\leq e \leq  \min(s_2-\lambda+\phi, \lambda_{-1}-\lambda)-1$, we then have
$$
\frac{|a_{e+1}|}{|a_{e}|} < \frac{q^{-e -\lambda-s_1+\phi-1 }}{(1-q^{-1})^2}.
$$
If $-e -\lambda-s_1+\phi-1 \leq -2$, then the above is $\leq 1$, then then our sum for $\Qf_q$ is an alternating sum with
first term positive and terms decreasing in absolute value, and thus (strictly) positive.
Otherwise, if $-e -\lambda-s_1+\phi-1 \geq -1$, then
$e \leq -\lambda-s_1+\phi\leq 0$, and so $-\lambda-s_1+\phi=0$.
Similarly,  $\Qf_q$ is positive or $-\phi-s_2+\lambda=0$.  
If $-\phi-s_2+\lambda=0$, then there is only $1$ summand  we consider, $e=0$, and thus $\Qf_q$ is positive.
\end{proof}

\begin{definition}\label{D:Qbar}
We define $\bar{\Qf}_q(\lambda_{-1}, \lambda, \phi_{-1}, \phi,s_1,s_2)$ so that
$$
\Qf_q(\lambda_{-1}, \lambda, \phi_{-1}, \phi,s_1,s_2)=\frac{q^{-{\lambda^2}-s_1\lambda-\phi^2 -s_2\phi  +\lambda\phi
}}{\eta(\lambda_{-1}-\lambda)\eta(\phi_{-1}-\phi)}\bar{\Qf}_q(\lambda_{-1}, \lambda, \phi_{-1}, \phi,s_1,s_2),
$$
where $\Qf_q$ is defined in Lemma~\ref{L:Q}.
\end{definition}

Now we consider the ring $R\times R$, whose modules we view as pairs of $R$-modules.

\begin{proposition}\label{P:DVRdualpairs}
Let $q,R$ be as above. Let $s_1,s_2$ be integers such that 
$s_1,s_2\geq 0$,
and let $r\geq 1$ be an integer.
Let $M_{N_{\rho'}, N_{\pi'}}=q^{\left(\sum_{j=1}^r \rho_j\pi_j \right)-s_1|\rho| -s_2|\pi|}$ for all partitions $\rho,\pi$.
  Then the values $M_N$
  are well-behaved for $R\times R$ and are the moments of a measure $\nu$ such that  for all $k_1,k_2\geq 1$
  and all partitions $\lambda,\phi$ with $\lambda_{k_1+1}=\phi_{k_2+1}=0$, we have
 \begin{align*}
 \nu(\{P \,|\, & P^{\leq \underline{k}}\isom(N_{\lambda'}, N_{\phi'}) \})=
\frac{q^{  
\left(\sum_{j=1}^r \lambda_j\phi_j \right)+\sum_j -(\lambda_j)^2 -s_1\lambda_j -(\phi_j)^2 -s_2\phi_j}}{\prod_{j\geq 2} \eta(\lambda_{j-1}-\lambda_j)  \eta(\phi_{j-1}-\phi_j) 
}
\\
& \times
\frac{\eta(\infty)\eta(s_1+s_2)}{\eta(s_2-\lambda_1+\phi_1)\eta(s_1+\lambda_1-\phi_1)}
\prod_{j=2}^{\min(r,k_1,k_2)} \bar{\Qf}_q(\lambda_{j-1}, \lambda_j, \phi_{j-1}, \phi_j,s_1,s_2)\\
&\times \prod_{i=  \lambda_{k_1}+1}^{  \lambda_{\min(r,k_1,k_2)} } (1 - q^{ - s_1- i } )
\prod_{i=  \phi_{k_2}+1}^{  \lambda_{\min(r,k_1,k_2)} } (1 - q^{ - s_2- i } )
 \end{align*}
 if  $-s_1\leq \lambda_{1} -\phi_{1}\leq s_2$, and  $\nu(\{P \,|\,  P^{\leq \underline{k}}\isom(N_{\lambda'}, N_{\phi'}) \})=0$ otherwise.
 The above expression is positive if and only if  $-s_1\leq \lambda_{j} -\phi_{j}\leq s_2$ for all $1\leq j \leq \min(r,k_1,k_2)$.
The measure $\nu$ is supported on pairs of finite $R$-modules.
\end{proposition}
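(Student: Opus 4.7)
The plan is to apply Theorem~\ref{T:mom} by computing $v_{\underline{k},(N_{\lambda'},N_{\phi'})}$ explicitly, following the strategy of Proposition~\ref{P:DVRselfdual} but adapted to the pair of rings $R\times R$. The key structural observation is that both $\hat{\mu}$ and $|\Aut|$ factor over the two factors of $R\times R$:
$$\hat{\mu}((N_{\lambda'},N_{\phi'}),(N_{\rho'},N_{\pi'})) = \hat{\mu}(N_{\lambda'},N_{\rho'})\,\hat{\mu}(N_{\phi'},N_{\pi'}),$$
and similarly for $|\Aut|$. I would substitute the expressions for $\hat{\mu}(N_{\lambda'},N_{\rho'})/|\Aut(N_{\rho'})|$ derived in the proof of Proposition~\ref{P:DVRselfdual} (coming from Macdonald's surjection-counting formula), and reparametrize by $\rho_j=\lambda_j+e_j$, $\pi_j=\phi_j+f_j$, with constraints $0\le e_{j+1}\le\lambda_j-\lambda_{j+1}$ and $0\le f_{j+1}\le\phi_j-\phi_{j+1}$ forced by the conditions for a surjection.

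Next, observe that the moment $M_{(N_{\rho'},N_{\pi'})}=q^{\sum_{j=1}^r\rho_j\pi_j-s_1|\rho|-s_2|\pi|}$ factors as a product of terms indexed by $j$. Combined with the product structure of $\hat{\mu}/|\Aut|$, the full sum defining $v_{\underline{k},(N_{\lambda'},N_{\phi'})}$ decomposes into a product of independent inner sums, one for each $j$. The proof then reduces to evaluating each inner sum separately. For $j=1$ the sum ranges over all $e_1,f_1\ge 0$, contains the cross-term $(\lambda_1+e_1)(\phi_1+f_1)$ (since $1\le r$), and is exactly the sum appearing in Proposition~\ref{P:dualpairsvs} with vector-space parameters $(\lambda_1,\phi_1,s_1,s_2)$; this contributes the factor $\eta(\infty)\eta(s_1+s_2)/[\eta(s_2-\lambda_1+\phi_1)\eta(s_1+\lambda_1-\phi_1)]$ and forces the support condition $-s_1\le\lambda_1-\phi_1\le s_2$, while absolute convergence of this sum (established in Proposition~\ref{P:dualpairsvs}) gives well-behavedness. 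For $2\le j\le \min(r,k_1,k_2)$ the sum is finite, the cross-term $\rho_j\pi_j$ is present, and it is precisely $\Qf_q(\lambda_{j-1},\lambda_j,\phi_{j-1},\phi_j,s_1,s_2)$ by definition; rewriting via $\bar{\Qf}_q$ (Definition~\ref{D:Qbar}) produces the stated factors. For $\min(r,k_1,k_2)<j\le\min(k_1,k_2)$ (when $r<\min(k_1,k_2)$), as well as for $j$ in the range $\min(k_1,k_2)<j\le\max(k_1,k_2)$, the cross-term is absent so the $e_j$ and $f_j$ sums separate, and each factor is evaluated by the $q$-binomial identity as in the ``$j>\min(k,r)$'' case of Proposition~\ref{P:DVRselfdual}, producing the tail products $\prod_i(1-q^{-s_1-i})$ and $\prod_i(1-q^{-s_2-i})$.

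Positivity of the formula then follows immediately from Lemma~\ref{L:Q}: $\bar{\Qf}_q\ge 0$ always and is strictly positive exactly when $-s_1\le\lambda_j-\phi_j\le s_2$, while the tail products $(1-q^{-s-i})$ with $i\ge 1$ and $s\ge 0$ are manifestly positive. This yields the stated ``positive iff'' criterion. Finally, support on pairs of finite modules is established exactly as in Proposition~\ref{P:DVRselfdual}: if either component of $(P_1,P_2)$ has positive $\hat{R}$-free rank, then for fixed torsion part the value $\nu(\{P\mid P^{\le\underline{k}}\isom\cdot\})$ tends to $0$ as $k_1$ or $k_2\to\infty$, forcing $\nu(\{(P_1,P_2)\})=0$.

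The main obstacle I anticipate is bookkeeping rather than conceptual: correctly enumerating the case-by-case ranges of $j$ (distinguishing $j=1$, $2\le j\le\min(r,k_1,k_2)$, $\min(r,k_1,k_2)<j\le\min(k_1,k_2)$, and the asymmetric region between $\min(k_1,k_2)$ and $\max(k_1,k_2)$), and tracking how the many quadratic exponents in $q$ and the $\eta$-products recombine into the compact form in the proposition statement. Once the factorization of the sum over $j$ is in place and each piece identified with Proposition~\ref{P:dualpairsvs}, the definition of $\bar{\Qf}_q$, or the $q$-binomial evaluations already used in Proposition~\ref{P:DVRselfdual}, the remaining work is purely algebraic simplification.
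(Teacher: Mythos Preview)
Your proposal is correct and follows essentially the same approach as the paper's proof: factor the defining sum for $v_{\underline{k},(N_{\lambda'},N_{\phi'})}$ over $j$, identify the $j=1$ factor with Proposition~\ref{P:dualpairsvs}, the $2\le j\le\min(r,k_1,k_2)$ factors with $\Qf_q$ (hence $\bar{\Qf}_q$), and the remaining factors via the $q$-binomial theorem as in Proposition~\ref{P:DVRselfdual}, then invoke Lemma~\ref{L:Q} for positivity and argue finiteness of support as before. The only small addition in the paper is a one-line remark that the degenerate case $k_1=0$ or $k_2=0$ (needed for well-behavedness at \emph{all} levels $\underline{k}\in\mathbb{N}^2$, not just $k_1,k_2\ge 1$) reduces to the single-DVR setting with moments $|N|^{-s_i}$, handled by \cite[Lemma~6.3]{Sawin2022}.
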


Note the factor
$$
\frac{q^{  
\left(\sum_{j=1}^r \lambda_j\phi_j \right)+\sum_j -(\lambda_j)^2 -s_1\lambda_j -(\phi_j)^2 -s_2\phi_j}}{\prod_{j\geq 2} \eta(\lambda_{j-1}-\lambda_j)  \eta(\phi_{j-1}-\phi_j) 
}=\frac{M_{N_{\lambda'}, N_{\phi'}}}{|\Aut(N_{\lambda'}, N_{\phi'})|}.
$$

\begin{proof}
By considerations as in the proof of Proposition~\ref{P:DVRselfdual}  (in particular the second line of the large display) we have
\begin{align*}
&v_{\underline{k}, (N_{\lambda'},N_{\phi'})}
\\=&\sum_{\substack{e_1\dots,e_{k_1}\geq 0\\
f_1\dots,f_{k_2}\geq 0\\
\forall i \geq 1:e_{i+1}\leq \lambda_i-\lambda_{i+1}\\
\forall i \geq 1:f_{i+1}\leq \phi_i-\phi_{i+1}
}}  
\frac{ (-1)^{\sum_j e_j+f_j} q^{\sum_j \left(\binom{\lambda_j+e_j}{2}-\binom{\lambda_j}{2}-s_1(\lambda_j+e_j)
+\binom{\phi_j+f_j}{2}-\binom{\phi_j}{2}-s_2(\phi_j+f_j) 
\right) +\sum_{j=1}^{\min(r,k_1,k_2)} (\lambda_j+e_j)(\phi_j+f_j)
}
}
{q^{\sum_j (\lambda_j+e_j)^2+(\phi_j+f_j)^2} \prod_j  \eta(e_j)\eta(\lambda_j-\lambda_{j+1}-e_{j+1})
\eta(f_j)\eta(\phi_j-\phi_{j+1}-f_{j+1})}
\end{align*}
(By convention, $\lambda_j=e_j=0$ for $j>k_1$ and $\phi_j=f_j=0$ for $j>k_2$.)

If $k_1$ or $k_2=0$, then  $v_{\underline{k}, (N_{\lambda'},N_{\phi'})}$ can be computed just in the category of finite $R$-modules, and 
this case is treated in \cite[Lemma 6.3]{Sawin2022} for moments $M_N=|N|^{-s}$, and in particular $v_{\underline{k}, (N_{\lambda'},N_{\phi'})}\geq 0$.

As in the proof of Proposition~\ref{P:DVRselfdual}, the above sum factors over $j$, and the only factor that is an infinite sum is the $j=1$ sum, which is the same 
as the sum in Proposition~\ref{P:dualpairsvs} (the $\lambda,\phi,\trye,\tryf$ there are the $\lambda_1,\phi_1,e_1,f_1$ here,  respectively), as long as $k_1,k_2\geq 1$. 
 By the argument in the proof of
 Proposition~\ref{P:dualpairsvs} we then have that the above sum converges absolutely
 as long as $s_1+s_2+1>0$.  The $j=1$ factor 
$$
\sum_{\substack{e_1\geq 0\\
f_1\geq 0
}}  
\frac{ (-1)^{ e_1+f_1} q^{ \left(\binom{\lambda_1+e_1}{2}-\binom{\lambda_1}{2}-s_1(\lambda_1+e_1)
+\binom{\phi_1+f_1}{2}-\binom{\phi_1}{2}-s_2(\phi_1+f_1) 
\right) + (\lambda_1+e_1)(\phi_1+f_1)
}
}
{q^{ (\lambda_1+e_1)^2+(\phi_1+f_1)^2}   \eta(e_1)
\eta(f_1)}
$$ 
 evaluates to $\eta(\lambda_1)\eta(\phi_1)D_q(\lambda_1,\phi_1,s_1,s_2)$ as in the proof of Proposition~\ref{P:dualpairsvs}, and in particular is non-negative and is positive if and only if $\phi_{1}-\lambda_{1}\leq s_1$ and $\lambda_{1} -\phi_{1}\leq s_2$
 
The factor for $2\leq j \leq \min(r,k_1,k_2)$  is $\Qf_q(\lambda_{j-1}, \lambda_j, \phi_{j-1}, \phi_j,s_1,s_2)$, and Lemma~\ref{L:Q} addresses exactly when this is positive or $0$.  Inductively, it follows that
$$D_q(\lambda_1,\phi_1,s_1,s_2)\prod_{j=2}^{\min(r,k_1,k_2)} \Qf_q(\lambda_{j-1}, \lambda_j, \phi_{j-1}, \phi_j,s_1,s_2)\geq 0,$$ and is positive if and only
if  $\phi_{j}-\lambda_{j}\leq s_1$ and $\lambda_{j} -\phi_{j}\leq s_2$ for all $1\leq j \leq \min(r,k_1,k_2)$.

 The $\lambda$ part of the factor for $\min(r,k_1,k_2)< j \leq k_1$ further factors into
 \begin{align*}
 \sum_{e_j=0}^{ \lambda_{j-1}-\lambda_{j}} 
\frac{ (-1)^{ e_j} q^{\left(\binom{\lambda_j+e_j}{2}-\binom{\lambda_j}{2}-s_1(\lambda_j+e_j)
\right) 
}
}
{q^{(\lambda_j+e_j)^2} \eta(e_j)\eta(\lambda_{j-1}-\lambda_{j}-e_{j})
}
=&
q^{-{(\lambda_j)^2}-s_1\lambda_j
}
\sum_{e_j=0}^{ \lambda_{j-1}-\lambda_{j}} 
\frac{ (-1)^{e_j} q^{
-\binom{e_j}{2} +e_j(-\lambda_j-s_1+1) 
}}
{ \eta(e_j)\eta(\lambda_{j-1}-\lambda_{j}-e_{j})
}\\
=&
\frac{q^{-{(\lambda_j)^2}-s_1\lambda_j
}}{\eta(\lambda_{j-1}-\lambda_{j})}
\prod_{i=0}^{ \lambda_{j-1}-\lambda_{j}-1} (1-q^{-\lambda_j-s_1-1-i}) 
.
 \end{align*}
for each $j\leq k_1$, and an analogous expression for $j\leq k_2$ involving $\phi_j,\phi_{j-1},s_2$. 
 Since $s_i\geq 0$, we see the exponents of $q$ in the products above are all negative, and thus the above expression is always positive.

 We pull out all the powers of $q$ and expressions $\eta(\lambda_{j-1}-\lambda_{j})$ and $\eta(\phi_{j-1}-\phi_{j})$, and we 
 combine the remaining products for the $j>\min(r,k_1,k_2)$ terms as in the proof of Proposition~\ref{P:DVRselfdual}.
 Putting this all together, we obtain the formula for the measures.
 
 There are countably many finitely generated $\hat{R}\times\hat{R}$-modules, and we will show each infinite one occurs with probability $0$.  
Let $N=(N_1,N_2)$ be an ordered pair of finitely generated $\hat{R}$-modules. 
We define partitions $\lambda(k)$ and $\phi(k)$ such that $(N_{\lambda(k)'},N_{\phi(k)'})=N^{\leq \underline{k}}$.  In particular, 
if $N_i=\hat{R}^{\ell_i} \times T_i$ for some $T_i$ with $\mathfrak{m}^{k_0}T_i=0$, then for $k\geq k_0$,
we have that 
$\lambda(k)_j$ and $\phi(k)_j$ do not depend on $k$ for $j\leq k_0$,
and  $\lambda(k)_j=\ell_1$
and $\phi(k)_j=\ell_2$
 for $k_0<j\leq k$. If either $\ell_1\geq 1$ or $\ell_2\geq 1$, then
 $\nu(\{ P \,|\, P^{\leq \underline{k}}\isom N^{\leq\underline{k}}\}
)\ra 0$, which implies $\nu(\{N\})=0$.  Thus the measure is supported on pairs of finite modules.  
\end{proof}

\section{Representations of $\Gamma$ and formulas for our conjectured distributions}\label{S:Gamma}
Let $\Gamma$ be a finite group.
In this section, we explain the translation between the moment problem for $\Z_p[\Gamma]$-modules and the one for the kind of $R$ we considered in
Sections~\ref{S:mom}, \ref{S:vs} and \ref{S:DVR}.

For a prime $p\nmid |\Gamma|$, we let $T=\Z_p[\Gamma]$. By \cite[Theorem 74.11]{CurtisReiner2} and \cite[Proposition 27.1, Theorem 26.20(i), and Exercise 26.10]{CurtisReiner1}, we have $T \cong \prod_{i=1}^m  M_{n_i \times n_i} (R_{d_i})$ for some natural numbers $n_i$ and $d_i$, with $R_{d_i}$ the ring of integers in the degree $d_i$ unramified extension of $\mathbb Q_p$. Let $e_i$ be the $i$th primitive central idempotent of $T$, so that $e_i T \cong M_{n_i \times n_i} (R_{d_i})$. Then $e_i T$ has a unique indecomposable projective module $V_i \cong R_{d_i}^{n_i}$. Considering modules over $T \otimes \mathbb Q_p$ and $T\otimes \mathbb F_p$ respectively, the (distinct isomorphism classes of) irreducible representations of $\Gamma$ over $\Q_p$ are $V_i\tensor \Q_p$ and the (distinct isomorphism classes of) irreducible representations of 
$\Gamma$ over $\F_p$ are $V_i\tensor \F_p$. Note that $V_i$ is a $\mathbb Z_p$-module (and $V_i \tensor \mathbb Q_p$ and $V_i \tensor \F_p$ are representations) of rank $n_i d_i$ and $V_i \otimes \overline{\mathbb Q_p}$ and $V_i \otimes \overline{\mathbb F_p}$ split as a sum of $d_i$ irreducible representations. 

We always choose an ordering so that $V_1$ corresponds to the trivial representation. 

Since $T=\prod_{i=1}^m e_iT$, the category of finite $T$-modules is the product of the categories of finite $e_iT$ modules. By the Morita theorem, the category of $R_{d_i}$-modules  is equivalent to the category of $e_iT$-modules, by an equivalence $\mathcal{F}_i$ 
that takes an $R_{d_i}$-module $H$ to an $e_iT$-module whose underlying $R_{d_i}$-module is $H^{n_i}$.
Thus, the computations that we have done above in the categories of $R_{d_i}$-modules can be applied to $e_iT$-modules.

There is an involution
$\sigma$ on the primitive central idempotents $e_i$, fixing those corresponding to self-dual representations (over $\Q_p$ or $\F_p$), and exchanging 
the $e_i,e_{i'}$ corresponding to pairs of dual representations.
For an $e_i$ fixed by $\sigma$ corresponding to a representation $V_i$ over $\F_p$, 
let $\kappa_i=\F_{p^{d_i}}=\End_\Gamma(V_i)$, and
let $\epsilon_i=-1$ if $(\wedge_{\kappa_i}^2 V_i)^\Gamma\ne 0$, let 
$\epsilon_i=1$ if $(\wedge_{\kappa_i}^2 V_i)^\Gamma= 0$ but $(V_i\tensor_{\kappa_i}V_i)^\Gamma \ne 0$, and let  $\epsilon_i=0$ if $(V_i\tensor_{\kappa_i}V_i)^\Gamma= 0$.

The following lemmas are straightforward calculations from the definitions.

\begin{lemma}\label{L:selfdualmom}
Let $\Gamma, p, e_i,d_i,n_i,\epsilon_i$ be as above such that $e_i$ corresponds to a self-dual representation $V_i$ over $\F_p$.
Let $\lambda$ be a partition, and let $V_\lambda$ be the $e_iT$-module corresponding (via the Morita theorem) to the $R_{d_i}$-module $N_\lambda$.
Let $q=p^{d_i}$.  
Let $r\geq 0$ and $u$ be integers.
Then
$$
|(\wedge^2_{\Z_p} V_{\lambda'} [p^r])^\Gamma||V_{\lambda'}|^{-u}=q^{\sum_{j=1}^r 
\binom{\lambda_j}{2} 
+(1-\epsilon_i)\frac{\lambda_j}{2} +\sum_{j\geq 0} 
-un_i\lambda_j }.
$$
\end{lemma}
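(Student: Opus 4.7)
The plan is to decompose $V_{\lambda'}$ into cyclic pieces and reduce to computing $\Gamma$-invariants of $\wedge^2_{\Z_p} V_i$ and $V_i \otimes_{\Z_p} V_i$. Since $R_{d_i}/\Z_p$ is unramified, $\mathfrak m^a = p^a R_{d_i}$, so under Morita the module $V_{\lambda'}$ (corresponding to $N_{\lambda'} = \bigoplus_j R_{d_i}/\mathfrak m^{\lambda'_j}$) is $\bigoplus_j V_i/p^{\lambda'_j}V_i$ as a $\Z_p[\Gamma]$-module. Expanding the wedge of a direct sum gives
\[
\wedge^2_{\Z_p} V_{\lambda'} = \bigoplus_j \wedge^2_{\Z_p}(V_i/p^{\lambda'_j}V_i) \,\oplus\, \bigoplus_{j<k}(V_i/p^{\lambda'_j}V_i)\otimes_{\Z_p}(V_i/p^{\lambda'_k}V_i),
\]
so $|(\wedge^2_{\Z_p} V_{\lambda'})^\Gamma[p^r]|$ factors. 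The standard identities $\wedge^2(V_i/p^a) = (\wedge^2 V_i)/p^a$ and $(V_i/p^a) \otimes_{\Z_p} (V_i/p^b) = (V_i\otimes V_i)/p^{\min(a,b)}$, together with exactness of $(-)^\Gamma$ on $\Z_p[\Gamma]$-modules (since $\frac{1}{|\Gamma|}\sum_\gamma\gamma \in \Z_p[\Gamma]$ because $p\nmid|\Gamma|$), reduce the problem to knowing the $\Z_p$-ranks of $(V_i\otimes V_i)^\Gamma$ and $(\wedge^2 V_i)^\Gamma$.

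Both invariant submodules are direct summands of free $\Z_p$-modules (via the averaging idempotent), hence free. To get their ranks I would base change to $\overline{\Q_p}$ and decompose $V_i\otimes\overline{\Q_p}$ as a sum of $d_i$ pairwise non-isomorphic, Galois-conjugate, absolutely irreducible summands $W_1,\ldots,W_{d_i}$, each self-dual (inherited from $V_i$). Pairwise non-isomorphism gives $(V_i\otimes V_i)^\Gamma \otimes \overline{\Q_p} = \bigoplus_k (W_k\otimes W_k)^\Gamma$ of dimension $d_i$. The Frobenius--Schur type (the same for all Galois conjugates) decides whether each one-dimensional $(W_k\otimes W_k)^\Gamma$ lies in $\wedge^2 W_k$ or in $\Sym^2 W_k$, so $\rk_{\Z_p}(\wedge^2 V_i)^\Gamma$ equals $d_i$ (symplectic case) or $0$ (orthogonal case), i.e.\ $d_i(1-\epsilon_i)/2$ in both cases. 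Matching the paper's $\epsilon_i$ (defined via $(\wedge^2_{\kappa_i}V_i^{\F_p})^\Gamma$) to the Frobenius--Schur indicator requires reducing mod $p$ (where $(-)^\Gamma$ commutes with $\otimes\F_p$) and making the analogous Galois-conjugate decomposition of $V_i^{\F_p}\otimes\overline{\F_p}$, which produces the same symplectic/orthogonal dichotomy.

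Setting $W_a := V_i/p^aV_i$, the above gives that $(\wedge^2 W_a)^\Gamma$ is free over $\Z/p^a$ of rank $d_i(1-\epsilon_i)/2$ and $(W_a\otimes W_b)^\Gamma$ is free over $\Z/p^{\min(a,b)}$ of rank $d_i$. Since $p^r$-torsion of a free $\Z/p^m$-module of rank $\rho$ has $p^{\rho\min(m,r)}$ elements and $q=p^{d_i}$,
\[
|(\wedge^2 W_a)^\Gamma[p^r]| = q^{(1-\epsilon_i)\min(a,r)/2}, \qquad |(W_a\otimes W_b)^\Gamma[p^r]| = q^{\min(a,b,r)}.
\]
Multiplying and using $\min(\lambda'_j,\lambda'_k) = \lambda'_{\max(j,k)}$, the exponent of $q$ becomes $\tfrac{1-\epsilon_i}{2}\sum_j \min(\lambda'_j, r) + \sum_k (k-1)\min(\lambda'_k, r)$. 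Setting $\mu_k := \min(\lambda'_k, r)$, the sequence $\mu$ is the conjugate of the partition $(\lambda_1,\ldots,\lambda_r,0,\ldots)$, so $\sum_k \mu_k = \sum_{i=1}^r \lambda_i$ and $\sum_k (k-1)\mu_k = \sum_{i=1}^r \binom{\lambda_i}{2}$ by standard partition identities. Combined with $|V_{\lambda'}|^{-u} = q^{-un_i\sum_j \lambda_j}$ (from $V_{\lambda'}$ having underlying $R_{d_i}$-module $N_{\lambda'}^{n_i}$), this gives the claim. The main obstacle will be the rank computation of $(\wedge^2 V_i)^\Gamma$ and the careful matching of the paper's $\epsilon_i$ to the Frobenius--Schur indicator.
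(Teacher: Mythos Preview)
The paper does not give a proof of this lemma; it is simply declared a ``straightforward calculation from the definitions.'' Your approach is the natural one and is essentially complete, but there is a genuine gap in the rank computation of $(\wedge^2_{\Z_p} V_i)^\Gamma$.

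You assert that each absolutely irreducible summand $W_k$ of $V_i\otimes\overline{\Q_p}$ is self-dual, ``inherited from $V_i$.'' This is not true in general. Self-duality of $V_i$ over $\Q_p$ only says that the set $\{W_1,\dots,W_{d_i}\}$ is closed under taking duals; it does not force each $W_k$ to be its own dual. When the $W_k$ are not individually self-dual they occur in dual pairs, and this is exactly the case $\epsilon_i=0$ in the paper's trichotomy (for example $\Gamma=C_3$ or $C_5$ with $p=2$, discussed in Section~\ref{S:C35}). Your case analysis ``$d_i$ (symplectic) or $0$ (orthogonal)'' therefore misses one of the three cases.

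The good news is that your target formula $\rk_{\Z_p}(\wedge^2 V_i)^\Gamma = d_i(1-\epsilon_i)/2$ is still correct when $\epsilon_i=0$, just for a different reason: over $\overline{\Q_p}$ the invariants in $\bigoplus_j \wedge^2 W_j$ all vanish (no $W_j$ is self-dual), while each dual pair $\{W_j,W_{j^*}\}$ contributes a single one-dimensional $(W_j\otimes W_{j^*})^\Gamma$ inside $\bigoplus_{j<k} W_j\otimes W_k$, giving $d_i/2$. Similarly your claim $\rk(V_i\otimes V_i)^\Gamma=d_i$ is correct in all three cases, but again the nonzero summands are $(W_k\otimes W_{k^*})^\Gamma$ rather than $(W_k\otimes W_k)^\Gamma$ as you wrote. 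Once you add this third branch to your case analysis (and do the parallel mod-$p$ analysis to confirm that the paper's $\epsilon_i$, defined via $\kappa_i$-linear tensors, detects precisely this trichotomy), the rest of your argument goes through unchanged.
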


\begin{lemma}\label{L:nonselfdualmom}
Let $\Gamma, p, e_i,d_i,n_i$ be as above such that $e_i,e_{i'}$ correspond to non-isomorphic dual representations $V_i,V_{i'}$ over $\F_p$.
Let $\lambda,\phi$ be partitions, and let $V_\lambda$ be the $e_iT$-module corresponding (via the Morita theorem) to the $R_{d_i}$-module $N_\lambda$,
and $W_\phi$ be the $e_{i'}T$-module corresponding to the $R_{d_i}$-module $N_\phi$.
Let $q=p^{d_i}$.  
Let $r\geq 0$ and $u$ be integers.
Then
$$
|(\wedge^2_{\Z_p} (V_{\lambda'} \times W_{\phi'}) [p^r])^\Gamma||V_{\lambda'} \times W_{\phi'}|^{-u}=
q^{\sum_{j=1}^r \lambda_j\phi_j +\sum_{j\geq 0} 
-un_i\lambda_j -un_i\phi_j}.
$$
\end{lemma}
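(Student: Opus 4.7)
The plan is to decompose
\[
\wedge^2_{\Z_p}(V_{\lambda'} \times W_{\phi'}) \cong \wedge^2_{\Z_p} V_{\lambda'} \oplus (V_{\lambda'} \otimes_{\Z_p} W_{\phi'}) \oplus \wedge^2_{\Z_p} W_{\phi'}
\]
and handle each summand after applying $(-)^\Gamma[p^r]$, which is exact since $p\nmid|\Gamma|$. The norm factor $|V_{\lambda'}\times W_{\phi'}|^{-u}$ is immediate: via the Morita equivalence $\mathcal{F}_i$, the underlying $R_{d_i}$-module of $V_{\lambda'}$ is $N_{\lambda'}^{n_i}$ of cardinality $q^{n_i|\lambda|}$, and similarly for $W_{\phi'}$, giving the factor $q^{-un_i(|\lambda|+|\phi|)}$.

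For the two pure exterior-square summands, it suffices to show $(V_{\lambda'}\otimes_{\Z_p} V_{\lambda'})^\Gamma = 0$ (the argument for $W_{\phi'}$ is symmetric). By induction on the $p$-torsion exponent, using exactness of $(-)^\Gamma$ on $0\to pM\to M\to M/pM\to 0$, this reduces to the mod-$p$ statement $((V_{\lambda'}/p)\otimes_{\F_p}(V_{\lambda'}/p))^\Gamma = 0$. Since $V_{\lambda'}/p \cong (V_i\otimes_{\Z_p}\F_p)^{\lambda_1}$ as an $\F_p[\Gamma]$-module, this in turn reduces to $((V_i\otimes\F_p)\otimes_{\F_p}(V_i\otimes\F_p))^\Gamma = \Hom_\Gamma(V_{i'}\otimes\F_p,\, V_i\otimes\F_p) = 0$, which holds because $V_i\otimes\F_p$ and $V_{i'}\otimes\F_p$ are non-isomorphic irreducible $\F_p[\Gamma]$-modules.

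The main content is the computation of $|(V_{\lambda'}\otimes_{\Z_p} W_{\phi'})^\Gamma[p^r]|$. Set $R = R_{d_i}$. Since $\Z_p$-linear duality on $\Z_p[\Gamma]$-modules swaps the blocks $e_iT$ and $e_{i'}T$ and preserves ranks of indecomposable projectives, $V_{i'}\cong V_i^\vee$, and the evaluation pairing identifies $V_i\otimes_{\Z_p} V_{i'} \cong \End_{\Z_p}(V_i)$ as $\Z_p[\Gamma]$-modules with $\Gamma$ acting by conjugation; hence
\[
(V_i\otimes_{\Z_p} V_{i'})^\Gamma = \End_{\Z_p[\Gamma]}(V_i) = \End_{e_iT}(V_i) = R,
\]
the last equality because $V_i$ is the indecomposable projective of $e_iT\cong M_{n_i\times n_i}(R)$. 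Writing $V_{\lambda'} = V_i\otimes_R N_{\lambda'}$ and $W_{\phi'} = V_{i'}\otimes_R N_{\phi'}$, I then rearrange
\[
V_{\lambda'}\otimes_{\Z_p} W_{\phi'} \cong (V_i\otimes_{\Z_p} V_{i'}) \otimes_{R\otimes_{\Z_p} R} (N_{\lambda'}\otimes_{\Z_p} N_{\phi'}),
\]
and under the splitting $R\otimes_{\Z_p} R \cong \prod_{\tau\in\Gal(R/\Z_p)} R$ the invariants $(V_i\otimes V_{i'})^\Gamma$ are supported on the diagonal factor, because the two $R$-actions on $\End_{\Z_p}(V_i)$ coincide on $\Gamma$-equivariant endomorphisms. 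Taking invariants (exact) then yields $(V_{\lambda'}\otimes_{\Z_p} W_{\phi'})^\Gamma \cong N_{\lambda'}\otimes_R N_{\phi'} = \bigoplus_{j,k} R/\mathfrak{m}^{\min(\lambda'_j,\phi'_k)}$, whose $p^r$-torsion has cardinality $q^{\sum_{j,k}\min(\lambda'_j,\phi'_k,r)} = q^{\sum_{\ell=1}^r \lambda_\ell\phi_\ell}$, by rewriting $\min$ as a sum of indicators, swapping the order of summation, and using $\lambda''=\lambda$. The main subtlety is justifying that $(V_i\otimes V_{i'})^\Gamma$ really lives on the diagonal component of $R\otimes_{\Z_p} R$; everything else is bookkeeping with the Morita equivalence.
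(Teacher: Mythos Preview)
Your argument is correct. The paper does not actually give a proof of this lemma beyond remarking that it (together with Lemma~\ref{L:selfdualmom}) is a ``straightforward calculation from the definitions,'' so your detailed verification already goes beyond what the paper provides.

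Two minor comments. First, the induction for $(V_{\lambda'}\otimes_{\Z_p} V_{\lambda'})^\Gamma=0$ is cleanest phrased as induction on length rather than on the $p$-exponent: every composition factor of $V_{\lambda'}\otimes_{\Z_p}V_{\lambda'}$ is isomorphic to $(V_i\otimes\F_p)\otimes_{\F_p}(V_i\otimes\F_p)$, which has no $\Gamma$-invariants by your mod-$p$ computation, and exactness of $(-)^\Gamma$ finishes it. Second, the passage through the $R\otimes_{\Z_p}R$-module structure, while valid, can be bypassed. Writing $V_{\lambda'}\cong\bigoplus_j V_i/p^{\lambda'_j}$ and $W_{\phi'}\cong\bigoplus_k V_{i'}/p^{\phi'_k}$, one computes directly
\[
\bigl((V_i/p^a)\otimes_{\Z_p}(V_{i'}/p^b)\bigr)^\Gamma=(V_i\otimes_{\Z_p}V_{i'})^\Gamma\otimes_{\Z_p}\Z_p/p^{\min(a,b)}=R/p^{\min(a,b)},
\]
using that $V_i,V_{i'}$ are $\Z_p$-free and that $(-)^\Gamma$ is multiplication by the idempotent $|\Gamma|^{-1}\sum_{\gamma}\gamma$, hence commutes with $-\otimes_{\Z_p}\Z_p/p^c$. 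Summing over $j,k$ and taking $p^r$-torsion gives $\bigoplus_{j,k}R/p^{\min(\lambda'_j,\phi'_k,r)}$, whose order is $q^{\sum_{\ell=1}^r\lambda_\ell\phi_\ell}$ by the same indicator-sum manipulation you use. Your diagonal-component argument and this direct decomposition are equivalent; the latter just avoids having to identify which component of $R\otimes_{\Z_p}R$ carries the invariants.
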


Let $\mathfrak{m}_i$ be the maximal ideal of $R_{d_i}$ (which is generated by $p$, but we write like this since we will consider modules for the product of $R_{d_i}$.) 
  For $\underline{k}=(k_2,\dots,k_m)\in \mathbb{N}^{m-1}$, we write
$\mathfrak{m}^{\underline{k}}=\prod_{i\geq 2} \mathfrak{m}_i^{k_i}$ for the ideal of $T$. 
The finite $T/e_1T$-modules are given by tuples $\underline{\lambda}=(\lambda^2,\dots,\lambda^m)$ of partitions
such that $\underline{\lambda}$ corresponds to the module $V_{\underline{\lambda}}:=\oplus_{i} \mathcal{F}_i(N_{\lambda^i})$ 
(and $N_{\lambda^i}$ is the $R_{d_i}$-module described in Section~\ref{S:mom}).\footnote{We apologize for the slight abuse of notation, since the ring $R$ is implicit in the notation $N_{\lambda}$. }
 For a $T/e_1T$-module $V$, we let $V^{\leq \underline{k}}=
V/\mathfrak{m}^{\underline{k}}V.$  
For a tuple $\underline{\lambda}$ of partitions, we write $\underline{\lambda}'$ for the tuple in which each partition is the conjugate of the corresponding partition from $\underline{\lambda}$.

We can now give an explicit description of the measure of Conjecture~\ref{C}.  Recall that $\Pf_q$ and $\bar{\Qf}_q$ are defined in Proposition~\ref{P:DVRselfdual} and Definition~\ref{D:Qbar} respectively. 

\begin{theorem}\label{T:formula}
Let $\Gamma$ be a finite group and $p$ a prime such that $p\nmid |\Gamma|$.  Let $m,T, e_i, n_i,d_i,\epsilon_i$ be as above.  Let $q_i=p^{d_i}$.
Let $r,u$ be positive integers.
Let $S=T/e_1T$.  Then there is a unique measure $\nu$ on the set
of isomorphism classes of finite $S$-modules (with the discrete topology and $\sigma$-algebra)
such that
$$
\int_X |\Sur(X,V)|d\nu=\frac{|(\wedge^2_{\Z_p}V)^\Gamma[p^r]|}{|V|^u}
$$
for every finite $S$-module $V$. 
For all $\underline{k}\in \mathbb{N}^{m-1}$ and tuples of partitions $\underline{\lambda}$ such that $\lambda^i_{k_i+1}=0$ for all $i\geq 2$,
\begin{align*}
&\nu(\{V \,|\,V^{\leq \underline{k}} \isom  V_{\underline{\lambda}'} \})=
\frac{|(\wedge^2_{\Z_p} V_{\underline{\lambda}'})^\Gamma[p^r]|}{|V_{\underline{\lambda}'}|^u|\Aut(V_{\underline{\lambda}'})|}\\
&\times
\prod_{\substack{i\geq 2\\\sigma(e_i)=e_i}} \left(
\prod_{\ell\geq 0}(1+ q_i^{-un_i-\frac{\epsilon_i+1}{2}-\ell})^{-1} 
\prod_{\ell= 2}^{\min(r,k_i) } 
\Pf_{q_i}(un_i-\frac{1-\epsilon_i}{2},\lambda^i_{\ell-1}-\lambda^i_\ell)
\prod_{\ell=  \lambda^i_{k_i}+1}^{  \lambda^i_{\min(r,k_i)} } (1 - q_i^{ - un_i- \ell } ) \right)\\
&\times
\prod_{\substack{2\leq i< j\\\sigma(e_i)=e_{j}}}  \left(
\frac{\eta_{q_i}(\infty)\eta_{q_i}(2un_i)}{\eta_{q_i}(un_i-\lambda^i_{1}+\lambda^{j}_{1})\eta_{q_i}(un_i+\lambda^i_{1}-\lambda^{j}_{1})}
\right.
\\
&\left. 
\prod_{\ell=2}^{\min(r,k_i,k_{j})} \bar{\Qf}_{q_i}(\lambda^{i}_{\ell-1},\lambda^{i}_{\ell}, \lambda^{j}_{\ell-1}, \lambda^{j}_{\ell},un_i,un_i)
 \prod_{\ell=  \lambda^i_{k_i}+1}^{  \lambda^i_{\min(r,k_i,k_{j})} } (1 - q_i^{ - un_i- \ell } )
\prod_{\ell=  \lambda^{j}_{k_{j}}+1}^{  \lambda^{j}_{\min(r,k_i,k_{j})} } (1 - q_i^{ - un_i- \ell } ) \right),
\end{align*}
if  
 $|\lambda^i_{1} -\lambda^{j}_{1}|\leq un_i$ for all $2\leq i<j$ with $\sigma(e_i)=e_j$,
and $\nu(\{V \,|\,V^{\leq \underline{k}} \isom  V_{\underline{\lambda}'} \})=0$ otherwise.
The expression above is positive 
if and only if  $|\lambda^i_{\ell} -\lambda^{j}_{\ell}|\leq un_i$ for all $2\leq i<j$ with $\sigma(e_i)=e_j$
and all $1\leq \ell\leq  \min(r,k_i,k_j)$.
Moreover, any sequence $\nu^t$ of measures of  isomorphism classes of finite $S$-modules such that
$$
\lim_{t\ra\infty} \int_{X} |\SSur(X,V)| d\nu^t= \frac{|(\wedge^2_{\Z_p}V)^\Gamma[p^r]|}{|V|^u}, 
$$
for every finite $S$-module $V$,
has the property that $\lim_{t\ra\infty} \nu^t(V)=\nu(V)$ for every finite $S$-module $V$.
\end{theorem}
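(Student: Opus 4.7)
The plan is to reduce to the moment problems solved in Propositions~\ref{P:DVRselfdual} and \ref{P:DVRdualpairs} via Morita equivalence. Since $p \nmid |\Gamma|$, we have $T = \Z_p[\Gamma] \isom \prod_{i=1}^m M_{n_i \times n_i}(R_{d_i})$, and $S = T/e_1T$ corresponds to the factors for $i \geq 2$. The Morita equivalences $\mathcal{F}_i$ assemble into a categorical equivalence between finite $S$-modules and finite modules over $\prod_{i \geq 2} R_{d_i}$, sending $V_{\underline{\lambda}}$ to the tuple $(N_{\lambda^i})_{i \geq 2}$, and this equivalence commutes with the truncations $V^{\leq \underline{k}}$ (which on the product side is the truncation described in Section~\ref{S:mom}). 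Morita equivalence preserves surjections, automorphism groups, and thus the entire moment problem; moreover $\prod_{i \geq 2} R_{d_i}$ is a finite product of (quotients of) DVRs with finite residue fields, so Theorem~\ref{T:mom} and Corollary~\ref{C:finsup} apply directly.

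Next, the proposed moment factors according to the orbits of the duality involution $\sigma$ on the set of non-trivial idempotents: self-dual orbits $\{e_i\}$ and dual-pair orbits $\{e_i, e_j\}$. Lemma~\ref{L:selfdualmom} expresses each self-dual factor in the form of $M_{N_{\rho'}}$ of Proposition~\ref{P:DVRselfdual} with $s = un_i$ and $\epsilon = (1-\epsilon_i)/2$ (the hypothesis $s \geq \epsilon \geq 0$ is automatic since $un_i \geq 1 \geq (1-\epsilon_i)/2$). Lemma~\ref{L:nonselfdualmom} expresses each dual-pair factor in the form of $M_{N_{\rho'},N_{\pi'}}$ of Proposition~\ref{P:DVRdualpairs} with $s_1 = s_2 = un_i$. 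Since both the moments and the defining sum \eqref{E:defv} for $v_{\underline{k}, N}$ are multiplicative across a product of rings, well-behavedness for $S$ follows from the block-level well-behavedness, and Theorem~\ref{T:mom} produces the unique measure $\nu$ with $\nu(\{V \,|\, V^{\leq \underline{k}} \isom V_{\underline{\lambda}'}\})$ equal to the product of the block values given by the two propositions. Multiplying those closed formulas and collecting the prefactor into $|(\wedge^2_{\Z_p} V_{\underline{\lambda}'})^\Gamma[p^r]| / (|V_{\underline{\lambda}'}|^u |\Aut(V_{\underline{\lambda}'})|)$ produces the stated expression.

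Positivity and support of $\nu$ are product-wise consequences: each self-dual block is strictly positive because $\Pf_{q_i}(t,m) > 0$ for $t \geq 0$ (Proposition~\ref{P:DVRselfdual}) and the final product of the form $\prod(1 - q_i^{-un_i - \ell})$ is positive for $un_i \geq 1$, so positivity is dictated entirely by the dual-pair blocks, where Lemma~\ref{L:Q} characterizes positivity of $\bar{\Qf}_{q_i}$ by the inequalities $|\lambda^i_\ell - \lambda^j_\ell| \leq un_i$ for all relevant $\ell$; the support of $\nu$ on finite $S$-modules is inherited from the corresponding support assertions in the two propositions. The convergence conclusion follows directly from Corollary~\ref{C:finsup}: the moments are well-behaved and arise from a measure supported on finite $S$-modules, so any sequence of measures on finite $S$-modules whose moments converge pointwise to those of $\nu$ converges weakly to $\nu$, which for the discrete $\sigma$-algebra is pointwise convergence on each isomorphism class. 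The main technical obstacle is bookkeeping across the Morita equivalence: correctly tracking how $|V|^u$, $|(\wedge^2_{\Z_p} V)^\Gamma[p^r]|$, and the parameters $s, \epsilon, s_1, s_2$ pull back to the $R_{d_i}$-side so that Lemmas~\ref{L:selfdualmom} and \ref{L:nonselfdualmom} line up exactly with the normalizations of Propositions~\ref{P:DVRselfdual} and \ref{P:DVRdualpairs}, and then assembling the resulting products over self-dual indices and dual pairs $i < j$ with $\sigma(e_i) = e_j$ into the single displayed formula.
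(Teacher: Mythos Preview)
Your proposal is correct and follows essentially the same approach as the paper: reduce via Morita equivalence to $\prod_{i\geq 2} R_{d_i}$-modules, factor the moments over $\sigma$-orbits, and apply Propositions~\ref{P:DVRselfdual} and~\ref{P:DVRdualpairs} via Lemmas~\ref{L:selfdualmom} and~\ref{L:nonselfdualmom} with the parameters $s=un_i$, $\epsilon=(1-\epsilon_i)/2$, $s_1=s_2=un_i$. Your write-up is in fact more detailed than the paper's own proof; the only remark worth adding is that for a dual pair $\{e_i,e_j\}$ one has $n_i=n_j$ and $q_i=q_j$, which is why the formula can be written using only the index $i$.
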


If one wants a completely explicit formula,  $ |(\wedge^2_{\Z_p}V)^\Gamma[p^r]| $ is multiplicative over $\sigma$-orbits of the $e_i$, and
explicit formulas for each factor are given in Lemmas~\ref{L:selfdualmom} and \ref{L:nonselfdualmom}.
We also have $|\Aut(V_{\underline{\lambda}'})|=q^{\sum_{i,j} (\lambda^i_j)^2 }\prod_{i,j} \eta_{q_i}(\lambda^i_j-\lambda^i_{j+1} )$.

\begin{proof}[Proof of Theorem~\ref{T:formula}]
Using the Morita Theorem, we can translate this to an analogous statement on finite $\prod_{i\geq 2} R_{d_i}$-modules.  
We then apply Theorem~\ref{T:mom}.  Here note that the moments factor over the $\sigma$ orbits of $e_2,\dots e_m$.
Thus the sum defining well-behavedness and giving formulas for the measure similarly factors.  In each factor,
we have moments given by Lemma~\ref{L:selfdualmom} or Lemma~\ref{L:nonselfdualmom}, to which we can apply Proposition~\ref{P:DVRselfdual}
or Proposition~\ref{P:DVRdualpairs} to obtain not only that the moments are well-behaved, but that the resulting measures are supported on 
finite modules and given by the corresponding factor of the formula in this result.  
When we apply Proposition~\ref{P:DVRselfdual}, we 
do so with $s=un_i $ and $\epsilon=\frac{1-\epsilon_i}{2}$, and so we
use $u\geq 1$ so that $s-\epsilon\geq 0$.
Noting
that $n_i=n_{j}$, $q_i=q_{j}$ gives the result.
\end{proof}

We can prove Theorem~\ref{T:ptors} similarly. The formulas for
Theorem~\ref{T:ptors} can be obtained from Theorem~\ref{T:formula} by taking $\underline{k}= (1,1,\dots)$.

\begin{proof}[Proof of Theorem~\ref{T:ptors}]
The proof is very similar to that of Theorem~\ref{T:formula} except that we apply Theorem~\ref{T:mom} for the category of 
representations of $\Gamma$ over $\F_p$ instead of the category $\Z_p[\Gamma]$-modules.  
The well-behavedness required to apply Theorem~\ref{T:mom}  is just the $\underline{k}= (1,1,\dots)$ special case of the well-behavedness
used in the proof of Theorem~\ref{T:formula}.  

The formulas for the probabilities given by Theorem~\ref{T:mom} are then precisely the $\underline{k}=(1,1,\dots)$ formulas from
Theorem~\ref{T:formula}. This simplifies the formulas substantially.  Each of the partitions $\lambda^i$ become integers $\lambda^i_1$.  For $\underline{\lambda}'$ such that $(\lambda')^i=(1,\dots,1)$ for each $i$ and 
($\lambda^i_1$ gives the number of $1$), we have
\begin{align*}
&\nu(\{V \,|\,V/pV \isom  V_{\underline{\lambda}'} \})=
\frac{|(\wedge^2_{\Z_p} V_{\underline{\lambda}'})^\Gamma[p]|}{|V_{\underline{\lambda}'}|^u|\Aut(V_{\underline{\lambda}'})|}\times
\\
&\prod_{\substack{i\geq 2\\\sigma(e_i)=e_i}} 
\prod_{\ell\geq 0}(1+ q_i^{-un_i-\frac{\epsilon_i+1}{2}-\ell})^{-1} 
\prod_{\substack{2\leq i< j\\\sigma(e_i)=e_{j}}}  \left(
\frac{\eta_{q_i}(\infty)\eta_{q_i}(2un_i)}{\eta_{q_i}(un_i-\lambda^i_{1}+\lambda^{j}_{1})\eta_{q_i}(un_i+\lambda^i_{1}-\lambda^{j}_{1})}
\right).
\end{align*}
if  $|\lambda^i_{1} -\lambda^{j}_{1}|\leq un_i$ for all $2\leq i<j$ with $\sigma(e_i)=e_j$ 
and $\nu(\{V \,|\,V/pV \isom  V_{\underline{\lambda}'} \})=0$ otherwise.
\end{proof}

Similarly, to obtain an analogue of Theorem~\ref{T:ptors} describing the $p^k$-torsion, we would take $\underline{k}= (k,k,\dots)$.

Since a finite $S$-module can be detected mod $\mathfrak{m}^{\underline{k}}$ for sufficiently large $k_i$, we can use Theorem~\ref{T:formula} to give the measure on any particular $S$-module.

\begin{corollary}\label{C:specific}
For the measure $\nu$ in Theorem~\ref{T:formula} and any tuple $\underline{\lambda}$ of partitions, we have
\begin{align*}
&\nu(\{ V_{\underline{\lambda}'} \})=
\frac{|(\wedge^2_{\Z_p} V_{\underline{\lambda}'})^\Gamma[p^r]|}{|V_{\underline{\lambda}'}|^u|\Aut(V_{\underline{\lambda}'})|}\\
&\times
\prod_{\substack{i\geq 2\\\sigma(e_i)=e_i}} \left(
\prod_{\ell\geq 0}(1+ q_i^{-un_i-\frac{\epsilon_i+1}{2}-\ell})^{-1} 
\prod_{\ell= 2}^{r } 
\Pf_{q_i}(un_i-\frac{1-\epsilon_i}{2},\lambda^i_{\ell-1}-\lambda^i_\ell)
\prod_{\ell=  1}^{  \lambda^i_{r} } (1 - q_i^{ - un_i -  \ell } ) \right)\\
&\times
\prod_{\substack{2\leq i< j\\\sigma(e_i)=e_{j}}}  \left(
\frac{\eta_{q_i}(\infty)\eta_{q_i}(2un_i)}{\eta_{q_i}(un_i-\lambda^i_{1}+\lambda^{j}_{1})\eta_{q_i}(un_i+\lambda^i_{1}-\lambda^{j}_{1})}
\right.
\\
&\left. 
\prod_{\ell=2}^{r} \bar{\Qf}_{q_i}(\lambda^{i}_{\ell-1},\lambda^{i}_{\ell}, \lambda^{j}_{\ell-1}, \lambda^{j}_{\ell},un_i,un_i)
 \prod_{\ell= 1}^{  \lambda^i_{r} } (1 - q_i^{ - un_i- \ell } )
\prod_{\ell=  1}^{  \lambda^{j}_{r} } (1 - q_i^{ - un_i- \ell } ) \right),
\end{align*}
if  
 $|\lambda^i_{1} -\lambda^{j}_{1}|\leq un_i$ for all $2\leq i<j$ with $\sigma(e_i)=e_j$,
and $\nu(\{ V_{\underline{\lambda}'} \})=0$ otherwise.
The expression above is positive 
if and only if  $|\lambda^i_{\ell} -\lambda^{j}_{\ell}|\leq un_i$ for all $2\leq i<j$ with $\sigma(e_i)=e_j$
and all $1\leq \ell\leq r$.
\end{corollary}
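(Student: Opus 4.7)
The plan is to deduce Corollary~\ref{C:specific} from Theorem~\ref{T:formula} by choosing $\underline{k}$ large enough that (i) the formula in Theorem~\ref{T:formula} stabilizes to the expression stated in the corollary, and (ii) the support condition pins down $V$ uniquely up to isomorphism. No new moment calculation is required.

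First, for fixed $\underline{\lambda}$, pick $\underline{k}$ with $k_i \geq \max\bigl(r,\, (\lambda^i)'_1 + 1\bigr)$ for every $i \geq 2$, where $(\lambda^i)'_1$ is the number of parts of $\lambda^i$. For such $\underline{k}$ we have $\min(r, k_i) = r$ and $\min(r, k_i, k_j) = r$ whenever these appear, and $\lambda^i_{k_i} = 0$. Plugging these values into the formula of Theorem~\ref{T:formula} term by term, each $\min(r, k_i)$ upper limit collapses to $r$ and each lower limit $\lambda^i_{k_i} + 1$ collapses to $1$; this is exactly the expression claimed in Corollary~\ref{C:specific}. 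Moreover, the formula no longer depends on $\underline{k}$ once this threshold is exceeded.

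Second, I would use that $\nu$ is supported on finite $S$-modules (part of Theorem~\ref{T:formula}, inherited from Propositions~\ref{P:DVRselfdual} and \ref{P:DVRdualpairs}). The sets $A_{\underline{k}} := \{V : V^{\leq \underline{k}} \isom V_{\underline{\lambda}'}\}$ form a nested decreasing family as each $k_i$ increases, and their intersection restricted to finite $S$-modules equals $\{V_{\underline{\lambda}'}\}$: for any finite $V$, we have $V^{\leq \underline{k}} = V$ once each $k_i$ exceeds the largest part of the conjugate partition describing $V$, and $V^{\leq \underline{k}} \isom V_{\underline{\lambda}'}$ then forces $V \isom V_{\underline{\lambda}'}$. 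By continuity of measure,
\[
\nu(\{V_{\underline{\lambda}'}\}) \;=\; \lim_{\underline{k} \to \infty} \nu(A_{\underline{k}}),
\]
and by the stabilization in the previous step this limit equals the stated closed-form expression.

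Finally, the positivity characterization is just a factor-by-factor analysis of that closed form. The factors of the form $\Pf_{q_i}(\,\cdot\,,\,\cdot\,)$ are strictly positive by the first assertion of Proposition~\ref{P:DVRselfdual}; the factors $\bar{\Qf}_{q_i}(\lambda^i_{\ell-1}, \lambda^i_\ell, \lambda^j_{\ell-1}, \lambda^j_\ell, un_i, un_i)$ are non-negative and strictly positive iff $|\lambda^i_\ell - \lambda^j_\ell| \leq un_i$ by Lemma~\ref{L:Q}(2) (inductively using that the outer inequality $|\lambda^i_1 - \lambda^j_1| \leq un_i$ from Proposition~\ref{P:dualpairsvs} is assumed); and the finite products $\prod (1 - q_i^{-un_i - \ell})$ are manifestly positive since the exponents are negative. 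The remaining $\eta$-factors and the prefactor $|(\wedge^2 V_{\underline{\lambda}'})^\Gamma[p^r]|/(|V_{\underline{\lambda}'}|^u |\Aut(V_{\underline{\lambda}'})|)$ are positive. Conjoining these, $\nu(\{V_{\underline{\lambda}'}\})$ is positive iff $|\lambda^i_\ell - \lambda^j_\ell| \leq un_i$ for every dual pair $i,j$ and every $1 \leq \ell \leq r$. There is no real obstacle here — the only care needed is bookkeeping with $\min(r, k_i)$ and verifying that once $\underline{k}$ exceeds the threshold, none of the factors in Theorem~\ref{T:formula} depend on $\underline{k}$ anymore.
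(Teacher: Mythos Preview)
Your proposal is correct and follows essentially the same approach as the paper: apply Theorem~\ref{T:formula} with each $k_i$ at least $r$ and exceeding $(\lambda^i)'_1$, so that the $\min(r,k_i)$ and $\min(r,k_i,k_j)$ collapse to $r$ and $\lambda^i_{k_i}=0$. The only difference is that your continuity-of-measure limit is unnecessary: once $k_i > (\lambda^i)'_1$, the set $\{V : V^{\leq \underline{k}} \isom V_{\underline{\lambda}'}\}$ is already \emph{equal} to the singleton $\{V_{\underline{\lambda}'}\}$ (any $V$ with that truncation must have all parts $<k_i$ in each component, hence $V \isom V_{\underline{\lambda}'}$), so $\nu(A_{\underline{k}}) = \nu(\{V_{\underline{\lambda}'}\})$ directly, and the paper's proof is just that one sentence. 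Your explicit positivity check is likewise redundant since Theorem~\ref{T:formula} already states the positivity criterion with $\min(r,k_i,k_j)$, which specializes to $r$ here.
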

\begin{proof}
We apply Theorem~\ref{T:formula} with $k_i$ sufficiently large such that $\{V \,|\,V^{\leq \underline{k}} \isom  V_{\underline{\lambda}'} \}=\{ V_{\underline{\lambda}'} \}$ (so in particular, $\lambda^i_{k_i}=0$)
and also such that $k_i\geq r$.
\end{proof}

We now have a function field corollary of Theorem~\ref{T:MainFF} on the distribution of class groups of function fields.
Recall $E_{\Gamma}(n,q)$ is the set of isomorphism classes of extensions $K/\F_q(t)$ with a choice of isomorphism $\Gal(K/\F_q(t))\isom \Gamma$, that are split completely above $\infty$ and such that the radical of the discriminant ideal $\Disc(K/\F_q(t))$ has norm $q^n$.
For a finite set of primes $P$,  an abelian group $A$, let $A_P$ be the product of the Sylow $p$-subgroups of $A$ for $p\in P$.
An abelian \emph{$P$-group} is a finite product of abelian $p$-groups for $p\in P$.

\begin{corollary}\label{C:MainFF}
Let $\Gamma$ be a finite group and $P$ a finite set of primes not dividing $|\Gamma|$.
Let $H$ be a finite abelian $P$-group with an action of $\Gamma$ such that  $H^\Gamma=1$.  
For each $p\in P$, let $r_p$ be a positive integer.
Let $q_1,\dots$ be any sequence of  prime powers, all relatively prime to $|\Gamma|$ and all primes in $P$, and such that for all $i$, we have that $\F_{q_i}(t)$ contains the $p^{r_p}$th roots of unity, but not the $p^{{r_p}+1}$st roots of unity.

Then, as long as $q_x$ grows sufficiently fast with $x$, we have
$$
\lim_{x\ra\infty} 
\frac{\sum_{n\leq x} |\{K\in E_\Gamma(n,q_x) \,|\,  \Cl(\O_K)_P\isom_\Gamma H)  \} }{\sum_{n\leq x} |E_\Gamma(n,q_x)|}
 =\prod_{p\in P} \nu_p(H[ p^\infty] )
,$$
where $\nu_p$ is the measure from Theorem~\ref{T:formula} (given $\Gamma,p,r=r_p$ and $u=1$).
\end{corollary}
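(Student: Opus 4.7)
The plan is to upgrade the moment convergence of Theorem~\ref{T:MainFF} to distributional convergence via Corollary~\ref{C:finsup}, collapsing the iterated limits of Theorem~\ref{T:MainFF} through a diagonal choice of $q_x$. First I would verify that $\Cl(\O_K)_P$ naturally carries the structure of a finite $S$-module, where $S=\prod_{p\in P}S_p$ and $S_p=\Z_p[\Gamma]/(\sum_\gamma\gamma)$. For $p\nmid|\Gamma|$ the functor of $\Gamma$-invariants is exact on $\Z_p[\Gamma]$-modules, and the composite of norm and inflation is multiplication by $|\Gamma|$, which is a unit mod $p$; combined with $\Cl(\F_q[t])=1$ this forces $\Cl(\O_K)[p^\infty]^\Gamma=0$, so $\Cl(\O_K)[p^\infty]$ is an $S_p$-module. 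Thus the empirical distribution $\nu^x$ of $\Cl(\O_K)_P$ over $K\in\bigcup_{n\leq x}E_\Gamma(n,q_x)$ lives on the discrete set of finite $S$-modules.

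Next, for each finite $S$-module $V=\bigoplus_{p\in P}V_p$ with $V^\Gamma=0$, Theorem~\ref{T:MainFF} applied with $H=V$ and $h$ the unique divisor of $|V|$ having $v_p(h)=\min(r_p,v_p(|V|))$ gives that the iterated limit of the $V$-moment of the empirical distribution equals $|(\wedge^2 V)^\Gamma[h]|/|V|$. Our hypothesis $v_p(q_x-1)=r_p$ for all $p\in P$ ensures $(q_x-1,|V|)=h$ for every $x$. Since $\wedge^2$ and $\Gamma$-invariants commute with the $P$-primary decomposition, this factors as $\prod_{p\in P}|(\wedge^2 V_p)^\Gamma[p^{r_p}]|/|V_p|$, which by Theorem~\ref{T:formula} (with $u=1$) is exactly the $V$-moment of the product measure $\prod_{p\in P}\nu_p$.

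Then I would invoke a diagonal argument to pass from iterated to single limits. Enumerate the countably many finite $S$-modules $V_1,V_2,\dots$. The $q\to\infty$ limit in Theorem~\ref{T:MainFF} runs over an infinite subset of prime powers (those with the prescribed $p$-adic valuations of $q-1$, nonempty by standard density), so for each $x$ I can pick $q_x$ in this subset with $q_x\geq x$ such that the $V_i$-moment at parameter $x$ with $q=q_x$ lies within $1/x$ of its inner limit $g_{V_i}(x)$ for each $i\leq x$. Since $g_{V_i}(x)$ converges to $\prod_{p\in P}|(\wedge^2(V_i)_p)^\Gamma[p^{r_p}]|/|(V_i)_p|$ as $x\to\infty$ by the outer limit of Theorem~\ref{T:MainFF}, the $V$-moment of $\nu^x$ converges to the target product for every fixed $V$.

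Finally I would apply Corollary~\ref{C:finsup}. Each $\nu_p$ is supported on finite $S_p$-modules with well-behaved moments by Theorem~\ref{T:formula}, so $\prod_p\nu_p$ is supported on finite $S$-modules with well-behaved moments. The moments of $\nu^x$ converge to those of $\prod_p\nu_p$, so Corollary~\ref{C:finsup} gives weak convergence; on the discrete space of finite modules this is pointwise, yielding $\nu^x(\{H\})\to\prod_{p\in P}\nu_p(\{H[p^\infty]\})$ for every $H$. The main obstacle is the bookkeeping for the diagonal argument together with the verification that the product measure's moments truly match the factorized moment expression arising from Theorem~\ref{T:MainFF}; both are routine given the tools already assembled, and no deeper new difficulty arises.
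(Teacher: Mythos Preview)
Your proposal is correct and follows essentially the same approach as the paper: extract moment convergence from Theorem~\ref{T:MainFF}, collapse the iterated limits via a diagonal choice of $q_x$, and then invoke robustness (you use Corollary~\ref{C:finsup}, the paper cites Theorem~\ref{T:mom} directly, but these amount to the same thing since Theorem~\ref{T:formula} already establishes well-behavedness and finite support). Your write-up supplies considerably more detail than the paper's four-sentence proof, including the verification that $\Cl(\O_K)_P$ is an $S$-module and the explicit factorization of moments over $p\in P$.

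One small point of phrasing: the corollary asserts the conclusion for \emph{any} sequence $q_x$ satisfying the root-of-unity constraints and growing sufficiently fast, whereas your diagonal argument as written \emph{constructs} a particular sequence. The fix is immediate: for each $x$ the inner $q\to\infty$ limit in Theorem~\ref{T:MainFF} furnishes a threshold $Q_x$ such that every admissible $q\geq Q_x$ brings the first $x$ moments within $1/x$ of their inner limits; then ``sufficiently fast'' means $q_x\geq Q_x$, and your argument goes through unchanged for any such sequence. The paper is equally terse on this point (``It is a formal consequence\dots''), so this is bookkeeping rather than a gap.
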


\begin{proof}
We start with Theorem~\ref{T:MainFF}.  It is a formal consequence  from the statement of Theorem~\ref{T:MainFF} with two limits
to obtain the same statement with just $\lim_{x\ra\infty}$ and $q$ replaced by $q_x$ (growing sufficiently fast with $x$).  
Now we apply Theorem~\ref{T:mom}, and in particular uniqueness and robustness.  The proof of Theorem~\ref{T:formula} shows that these moments are well-behaved.
Thus the corollary follows from Theorem~\ref{T:mom}.
\end{proof}

\section{Class groups cannot be certain modules}\label{S:dontappear}

Using Theorem~\ref{T:formula}, Conjecture~\ref{C} predicts that the density of fields for which $V_{\underline{\lambda}}$ is the class group is zero unless $ |\lambda_\ell^i - \lambda_\ell^j| \leq u n_i$ for all $2 \leq i<j$ with $\sigma(e_i)=e_j$, and $1 \leq \ell \leq r$. We can verify this prediction by showing that, in fact, fields violating this condition do not exist.

\begin{theorem}\label{T:da}
 Let $\Gamma$ be a finite group and $p$ a prime such that $p\nmid |\Gamma|$. Let $m, T, e_i, n_i ,d_i $ be as at the start of Section~\ref{S:Gamma}. Let $r$ and $u$ be positive integers. 

Let $K_0$ be a number field with $u$ infinite places containing the $p^r$th roots of unity. Let $K$ be a Galois extension of $K_0$ with Galois group $\Gamma$. Let $\underline{\lambda}$ be the unique tuple of partitions such that  $\Cl_K [p^\infty]/ \Cl_{K_0}[p^\infty] \cong V_{\underline{\lambda}'}$. 

Then $|\lambda_\ell^i - \lambda_\ell^j| \leq u n_i$ for all $2 \leq i<j$ with $\sigma(e_i)=e_j$, and $1 \leq \ell \leq r$.

\end{theorem}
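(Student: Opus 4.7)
The plan is to establish the inequality via a Gras-style reflection argument at each level $p^\ell$ for $1 \le \ell \le r$, exploiting that $\Gamma$ acts trivially on $\mu_{p^\ell}$ since $\mu_{p^r} \subset K_0$. First I would reinterpret $\lambda^i_k$ representation-theoretically: via the Morita equivalence of Section~\ref{S:Gamma}, the $V_i$-isotypic component $M_i := (\Cl_K[p^\infty])^{V_i}$ is a finitely generated $R_{d_i}$-module whose partition has $\lambda^i_k$ equal to the number of cyclic $R_{d_i}$-summands of length at least $k$, equivalently $\dim_{\kappa_i}(\mathfrak{m}_i^{k-1}M_i/\mathfrak{m}_i^k M_i)$. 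For $i\ge 2$ the quotient by $\Cl_{K_0}[p^\infty]$ changes nothing, since $\Cl_{K_0}[p^\infty]\hookrightarrow\Cl_K[p^\infty]$ lands in the $\Gamma$-invariants (the $V_1$-isotypic part), so we may argue directly with $\Cl_K$.

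Fixing $1 \le \ell \le r$, I would invoke class field theory and Kummer theory jointly. Class field theory identifies $\Cl_K/p^\ell\Cl_K$ with $\Gal(L_\ell/K)$ for $L_\ell$ the maximal abelian unramified extension of $K$ of exponent dividing $p^\ell$. Kummer theory, applicable since $\mu_{p^\ell}\subset K$, writes $L_\ell = K(B_\ell^{1/p^\ell})$ for the unramified Kummer group $B_\ell \subset K^\times/(K^\times)^{p^\ell}$ (classes $x$ whose $p^\ell$-th root generates an extension of $K$ unramified everywhere, cut out by Selmer-type local conditions at primes above $p$ and, when $p=2$, at the archimedean places). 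The Kummer pairing is a perfect $\Z/p^\ell$-pairing with values in $\mu_{p^\ell}$, equivariant for $\Gamma$ up to the cyclotomic character, which is trivial. A standard character-theoretic computation then shows that the $V_i$-isotypic part of $\Cl_K/p^\ell\Cl_K$ is identified, as an $R_{d_i}$-module, with the $\Z/p^\ell$-dual of the $V_j$-isotypic part of $B_\ell$, where $V_j:=V_i^\vee$ corresponds to $\sigma(e_i)=e_j$. Since $\Z/p^\ell$-duality of finite $R_{d_i}$-modules preserves the elementary-divisor partition, and since $\Cl_K/p^\ell\Cl_K$ and $\Cl_K[p^\ell]$ are abstractly isomorphic as $R_{d_i}$-modules (both being $p^\ell$-truncations of $M_i$), I obtain an isomorphism of $R_{d_i}$-modules $B_\ell^{V_j}\cong\Cl_K[p^\ell]^{V_i}$, with partition $(\lambda^i_1,\dots,\lambda^i_\ell)$.

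The map $x\mapsto[I]$ where $(x)=I^{p^\ell}$ defines a surjection $B_\ell\to\Cl_K[p^\ell]$, whose kernel $W_\ell$ is the image of $\mathcal{O}_K^\times$ in $K^\times/(K^\times)^{p^\ell}$ restricted by the local conditions defining $B_\ell$. Passing to $V_j$-isotypic parts in the short exact sequence $0\to W_\ell\to B_\ell\to\Cl_K[p^\ell]\to 0$ (exact since $p\nmid|\Gamma|$), and identifying $B_\ell^{V_j}$ by the previous paragraph, yields an exact sequence of $R_{d_i}$-modules
\begin{equation*}
0\to W_\ell^{V_j}\to \Cl_K[p^\ell]^{V_i}\to \Cl_K[p^\ell]^{V_j}\to 0.
\end{equation*}
By Dirichlet's unit theorem and the fact (Remark~\ref{R:scatinf}) that all $u$ archimedean places of $K_0$ split completely in $K$, one has $\mathcal{O}_K^\times\otimes_\Z\Q_p \cong \Q_p[\Gamma]^u\ominus\Q_p$; since $V_j$ is non-trivial with multiplicity $n_j=n_i$ in $\Q_p[\Gamma]$, and the torsion subgroup $\mu_{p^\infty}(K)$ contributes only the trivial representation modulo $p$ (as $\mu_p\subset K_0$ forces trivial $\Gamma$-action on $\mu_{p^\infty}(K)/p$), the $V_j$-isotypic part of $\mathcal{O}_K^\times\otimes\Z/p^\ell$ is generated by at most $un_i$ elements over $R_{d_j}=R_{d_i}$, and hence so is its subquotient $W_\ell^{V_j}$.

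It remains to apply the following general fact about finite modules over a DVR: for a short exact sequence $0\to W\to B\to C\to 0$ with $W$ generated by $t$ elements, the counts $\rho_k$ of cyclic summands of length at least $k$ satisfy $|\rho_k(B)-\rho_k(C)|\le t$ for every $k\ge 1$. This reduces by induction on $t$ to the cyclic case $t=1$, which follows from the Smith normal form of a single generator of $W$ in $B$. Applied to the exact sequence above with $t\le un_i$, this gives $|\lambda^i_k-\lambda^j_k|\le un_i$ for $1\le k\le\ell$, and taking $\ell=r$ finishes the proof. The main technical obstacle is making the Kummer-CFT framework precise at the primes above $p$ (and at archimedean places in the $p=2$, $r=1$ case), but since we only need an upper bound on the generators of $W_\ell^{V_j}$, and $W_\ell$ is in any case a subquotient of $\mathcal{O}_K^\times\otimes\Z/p^\ell$, the resulting estimate is robust to the precise choice of local Selmer conditions.
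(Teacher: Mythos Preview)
Your overall strategy is the same as the paper's: a Gras-type reflection argument using Kummer theory, Dirichlet's unit theorem to bound the unit contribution, and an elementary-divisor inequality over the DVR $R_{d_i}$. The paper phrases the Kummer step cohomologically (a map $\psi_K\colon \Cl(K)^\vee[p^r]\to \Cl(K)[p^r]$ whose kernel embeds in $\mathcal O_K^\times\otimes\Z/p^r$) while you phrase it via the unramified Kummer group $B_\ell$; these are the same idea.

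There is, however, a genuine gap. You claim that $x\mapsto[I]$ gives a \emph{surjection} $B_\ell\to\Cl_K[p^\ell]$ with kernel $W_\ell$ coming from units. But you have already used the perfect Kummer pairing to identify $B_\ell$ with $(\Cl_K/p^\ell)^\vee$, so $|B_\ell|=|\Cl_K[p^\ell]|$. Surjectivity would then force $W_\ell=0$, and your displayed exact sequence would collapse to an isomorphism $B_\ell^{V_j}\cong \Cl_K[p^\ell]^{V_j}$, giving $\lambda^i_k=\lambda^j_k$ for all $k$ --- far too strong, and contradicting your own nontrivial bound on $W_\ell^{V_j}$. What is true is that the map $B_\ell\to\Cl_K[p^\ell]$ is well-defined (unramifiedness at every $v$, including $v\mid p$, does force $v(x)\equiv 0\pmod{p^\ell}$) with kernel contained in the image of $\mathcal O_K^\times$, but it is \emph{not} surjective in general. (The group that surjects onto $\Cl_K[p^\ell]$ with unit kernel is the larger flat cohomology group $H^1_{\mathrm{fl}}(\Spec\mathcal O_K,\mu_{p^\ell})$, which differs from $B_\ell$ by the local conditions at primes above $p$; you are conflating the two.)

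The fix is immediate. From the map $B_\ell^{V_j}\to \Cl_K[p^\ell]^{V_j}$ with kernel generated by at most $un_i$ elements, your general fact still yields $\rho_k(B_\ell^{V_j})-\rho_k(\mathrm{image})\le un_i$; since $\rho_k(\mathrm{image})\le\rho_k(\Cl_K[p^\ell]^{V_j})=\lambda^j_k$, this gives $\lambda^i_k-\lambda^j_k\le un_i$. The reverse inequality follows by swapping $i$ and $j$. This is exactly what the paper does: it only ever uses the bound on the kernel, never surjectivity, and obtains the second inequality by a symmetric argument.
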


The case $r=1$ of Theorem~\ref{T:da} was shown previously by Gras \cite[Theorem 7.7($\alpha$)(i) and Theorem 8.8($\alpha$)(i)]{Gras1998}, with the special case where $K$ is Galois over $\mathbb Q$ obtained earlier by Leopoldt \cite[Satz 2 on p. 170]{Leopoldt1958}.
See also \cite[Proposition 4.2.2]{Breen2021} for the case $K_0=\Q$ and $p=2$.

\begin{proof} As in \cite[Definition 4.1]{Lipnowski2020}, we can define a map $\psi_K$ \[\mathrm{Cl}(K)^\vee[{p^r}] \cong H^1(\operatorname{Spec}\mathcal O_K,\mathbb Z/{p^r}\mathbb Z) \rightarrow H^1(\operatorname{Spec}\mathcal O_K,\mu_{p^r}) \rightarrow \mathrm{Cl}(K)[{p^r}]\] where the first isomorphism arises from class field theory, the middle arrow is cup product with a generator of the $p^r$th roots of unity of $K_0$, and the last arrow arises from the Kummer exact sequence and the identification  $\mathrm{Cl}(K) \cong  H^1( \operatorname{Spec}\mathcal O_K, \mathbb G_m)$. Each step is clearly $\Gamma$-invariant (because, in particular, the roots of unity of $K_0$ are $\Gamma$-invariant) so this gives a map of $\mathbb Z-p[\Gamma]$-modules.

By the argument in the first two paragraphs of the proof of \cite[Lemma 6.21]{Lipnowski2020}, the kernel of $\psi_K$ is isomorphic as a $\mathbb Z_p[\Gamma]$-module to a submodule of $\mathcal O_K^\times \otimes \mathbb Z/p^r$.

Let $2 \leq i<j$ with $\sigma(e_i)=e_j$.
Applying the left-exact functor $\Hom_\Gamma (V_i, \cdot)$, we see that there is a map \[\Hom_\Gamma (V_i, \mathrm{Cl}(K)^\vee) [{p^r}] \to \Hom_\Gamma (V_i,  \mathrm{Cl}(K))[{p^r}]\] whose kernel is a subgroup of $\Hom_\Gamma (V_i, \mathcal O_K^\times  \otimes \mathbb Z/p^r)$.  We have $\Hom_\Gamma (V_i,  \mathrm{Cl}(K)) \cong \prod_{\ell=1}^{\infty}  (R_{d_i} /p^\ell)^{\lambda^i_\ell - \lambda^{i}_{\ell+1}} $ so $\Hom_\Gamma (V_i,  \mathrm{Cl}(K)) [p^r] \cong \prod_{\ell=1}^{r-1}   (R_{d_i}/p^\ell)^{\lambda^i_\ell - \lambda^{i}_{\ell+1}} \times (R_{d_i}/p^r) ^{\lambda^i_r} $. Similarly $\Hom_\Gamma (V_i, \mathrm{Cl}(K)^\vee)  \cong  \prod_{\ell=1}^{\infty}  (R_{d_i} /p^\ell)^{\lambda^j_\ell - \lambda^{j}_{\ell+1}} $ so $\Hom_\Gamma (V_i,  \mathrm{Cl}(K)^\vee) [p^r] \cong \prod_{\ell=1}^{r-1}   (R_{d_i}/p^\ell)^{\lambda^j_\ell - \lambda^{j}_{\ell+1}} \times (R_{d_i}/p^r) ^{\lambda^j_r} $.

Since our assumptions imply $K/K_0$ is split at $\infty$, each infinite place of $K_0$ lies under a simply transitive $\Gamma$-orbit of infinite places of $K$. Thus, by Dirichlet's unit theorem, $\mathcal O_K^\times \otimes \mathbb R$ is isomorphic to $ \mathbb R[\Gamma]^u / \mathbb R$ as an $\R[\Gamma]$-module. Since $p$ is prime to the order of $\Gamma$, this implies that $\mathcal O_K^\times \otimes \mathbb Z_p$, modulo torsion, is isomorphic to $\mathbb Z_p[\Gamma]^u/ \mathbb Z_p$ as a $\Z_p[\Gamma]$-module.  
Further, using $p\nmid |\Gamma|$, we then have that, as a $\Z_p[\Gamma]$-module,  $\O_K^\times\tensor \Z_p\isom \mathbb Z_p[\Gamma]^u/ \mathbb Z_p \times
(\mu_{\infty} (K) \tensor\Z_p)$.  Since the $p$th roots of unity are in $K_0$, the cyclic group $\mu_{\infty} (K) \tensor\Z_p$ has a trivial $\Gamma$-submodule, and hence
has trivial $\Gamma$-action since $p\nmid |\Gamma|$.  
Thus $\Hom (V_i, \mathcal O_K^\times \otimes \mathbb Z/p^r)  \cong  (R_{d_i} /p^r)^{u n_i}$.

It follows that we have a map \[  \prod_{\ell=1}^{r-1}   (R_{d_i}/p^\ell)^{\lambda^j_\ell - \lambda^{j}_{\ell+1}} \times (R_{d_i}/p^r) ^{\lambda^j_r}  \ra \prod_{\ell=1}^{r-1}   (R_{d_i}/p^\ell)^{\lambda^i_\ell - \lambda^{i}_{\ell+1}} \times (R_{d_i}/p^r) ^{\lambda^i_r} \] whose kernel has $R_{d_i}/p$-rank $\leq  u n_i$. 
We say the $R_{d_i}$-module $\prod_{\ell\geq 1}  (R_{d_i}/p^\ell)^{\rho_\ell}$ has \emph{$R_{d_i}/p^\ell$-rank} $\rho_\ell+\rho_{\ell+1}+\cdots$.
Since in a map of $p$-group $R_{d_i}$-modules, the $R_{d_i}/p^\ell$-rank of the source minus the $R_{d_i}/p^\ell$ rank of the target is at most the $R_{d_i}/p$-rank of the kernel
(e.g.  see \cite[II (4.3)(i)]{Macdonald2015}), this gives  $\lambda^j_\ell - \lambda^i_\ell \leq  u n_i$ for each $1\leq \ell\leq r$  A symmetric argument gives $\lambda^i_\ell - \lambda^j_\ell\leq un_i$.
\end{proof}

\section{Non-Galois extensions}\label{S:NG}
In this section, we explain how our conjecture also gives, as a consequence, the distribution of Sylow $p$-subgroups of class groups of non-Galois fields in the presence of roots of unity.

Let $\Gamma$ be a finite group and $\Gamma'$ a subgroup of $\Gamma$.
If $K_0$ is a number field, and $K/K_0$ is an extension with an isomorphism $\Gal(K/K_0)\isom \Gamma$,  we can also consider the (relative) class group $\Cl_{L|K_0}$ of the fixed field $L=K^{\Gamma'}$. Let $p$ be a prime not dividing $|\Gamma|$, and let 
$V_i$, $e_i$,$n_i$,$d_i$,$T$ be as at the start of Section~\ref{S:Gamma}., and let $S=T/e_1T$.  Let $m_id_i=\operatorname{rank}_{\Z_p} V_i^{\Gamma'}$.

Since $p\nmid |\Gamma|$, we have that the natural inclusion map
$$
\Cl_{L|K_0}[p^\infty] \ra \Cl_{K|K_0}[p^\infty]^{\Gamma'}
$$
is an isomorphism \cite[Corollary 7.7]{Cohen1990}.  
Thus, a conjecture on the distribution of $\Cl_{K|K_0}[p^\infty]$ as $\Z_p[\Gamma]$-modules formally implies a conjecture
on the distribution of $\Cl_{L|K_0}[p^\infty]$ as $\Z_p$-modules.  However, we will see that one can make the implication much more explicit.

Let $e_{\Gamma'}=|\Gamma'|^{-1} \sum_{\gamma\in \Gamma'} \gamma$.  
We have that $e_{\Gamma'}$ is a (not necessarily central) idempotent of $\Z_p[\Gamma]$.
For any $\Z_p[\Gamma]$-module $V$, we have that $e_{\Gamma'}V=V^{\Gamma'}$.
Let $\mathfrak{o}:= e_{\Gamma'}Se_{\Gamma'}.$  For any $S$-module $V$, we have that $V^{\Gamma'}$
is naturally an $\mathfrak{o}$-module, allowing the action to take place in $V$.

Let $e_{\Gamma|\Gamma'}$ be the central idempotent that is the sum of all the idempotents $e_i$ corresponding to non-trivial projective $\Z_p[\Gamma]$-modules that have non-trivial $\Gamma'$-invariants, i.e. the $e_i$ such that $e_{\Gamma'}e_i\ne 0$.
Let $\mathcal{M}$ be the set of the corresponding $i$.
The  $V_i$ for $i\in\mathcal{M}$ are exactly the non-trivial modules contained in $\Ind_{\Gamma'}^\Gamma \Z_p$ \cite[Lemma 7.10]{Wang2021}.
For any $S$-module $V$, we have that $V^{\Gamma'}=(e_{\Gamma|\Gamma'} V)^{\Gamma'}$.
We have $e_{\Gamma'}e_{\Gamma|\Gamma'}=e_{\Gamma'}$.

The ring $\mathfrak{o}$ is a maximal order in a semi-simple $\Q_p$ algebra with primitive central idempotents $ e_{\Gamma'}e_i=e_ie_{\Gamma'}$ for $i\in \mathcal{M}$ (over $\Q$, this can be found in \cite[Proposition 8.3, Corollary 8.10]{Wang2021}, and the argument over $\Q_p$ is the same). 
There is a Morita equivalence between $e_{\Gamma|\Gamma'}S$-modules and $\mathfrak{o}$-modules taking
$V\mapsto V^{\Gamma'}$ (see \cite[Theorem 8.9]{Wang2021}).  The inverse functor takes $U$ to $
\mathcal{I}(U):=e_{\Gamma|\Gamma'}S  e_{\Gamma'}\tensor_\mathfrak{o} U$. 
 If $U=\oplus_{i\in \mathcal{M}} e_{\Gamma'}e_iU$
is an $\mathfrak{o}$-module, 
and $V$ is a   $e_{\Gamma|\Gamma'}S$-module with $V^{\Gamma'}=U$ (so $V\isom \mathcal{I}(U)$), then
$V= \oplus_{i\in \mathcal{M}} e_iV$, with 
\begin{equation}\label{E:sizechange}
|e_iV|=|e_{\Gamma'}e_iU|^{n_i/m_i}
\end{equation}
by \cite[Theorem 7.3]{Cohen1990}.
The finite $\mathfrak{o}$-modules are thus indexed by tuples of partitions $(\lambda^i)_{i\in \mathcal{M}}$, given as $(V_{\underline{\lambda}})^{\Gamma'}$.
If $\underline{\lambda}$ is a tuple of partitions indexed by elements of $\mathcal{M}$, by abuse of notation, we also
use $\underline{\lambda}$ to denote the extension by zero, i.e. the tuple $(\lambda^2,\dots,\lambda^m)$, where $\lambda^i=0$ for $i\not\in \mathcal{M}$.
The following is an immediate consequence of the Morita equivalence.

\begin{proposition}\label{P:NG}
The measure $\nu$ on $S$-modules from Theorem~\ref{T:formula}, pushed forward to a measure on finite $\mathfrak{o}$-modules
via the map $V\mapsto V^{\Gamma'}$ from $S$-modules to $\mathfrak{o}$-modules, gives a measure
$\nu_{\Gamma'}$ on $\mathfrak{o}$-modules as follows.  Let $\underline{k}\in \N^{\mathcal{M}}$ and let
 $\underline{\lambda}$ be a tuple of partitions indexed by the elements of $\mathcal{M}$
such that $\lambda^i_{k_i+1}=0$ for all $i\in\mathcal{M}$.  Then
\begin{align}\label{E:ng}
\nu_{\Gamma'}(\{U  \,|\,\mathcal{I}(U)^{\leq \underline{k}} \isom  V_{\underline{\lambda}'} \})
=
\nu(\{V \,|\,V^{\leq \underline{k}} \isom  V_{\underline{\lambda}'} \}).
\end{align}
Also, the $U$-moment of $\nu_{\Gamma'}$ is $|(\wedge^2_{\Z_p} V)^\Gamma[p^r]||V|^{-u}$ for $V=\mathcal{I}(U)$.
\end{proposition}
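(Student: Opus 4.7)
The plan is to extract both assertions directly from the Morita equivalence between $e_{\Gamma|\Gamma'}S$-modules and $\mathfrak{o}$-modules sending $W \mapsto W^{\Gamma'}$ with quasi-inverse $\mathcal{I}$. First I will record two preliminary facts that do all the bookkeeping. (i) For any $S$-module $V$, we have $\mathcal{I}(V^{\Gamma'}) = e_{\Gamma|\Gamma'} V$, since $V^{\Gamma'} = (e_{\Gamma|\Gamma'}V)^{\Gamma'}$ and the Morita inverse recovers $e_{\Gamma|\Gamma'}V$ from its invariants. (ii) For each $i \in \mathcal{M}$ and $j \notin \mathcal{M}$, the idempotent $e_j$ lies in $\mathfrak{m}_i$, hence $e_j = e_j^{k_i} \in \mathfrak{m}_i^{k_i}$. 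Since the $\mathfrak{m}_i$ are pairwise coprime, $\mathfrak{m}^{\underline{k}} = \prod_{i \in \mathcal{M}} \mathfrak{m}_i^{k_i} = \bigcap_{i \in \mathcal{M}} \mathfrak{m}_i^{k_i}$, so $(1 - e_{\Gamma|\Gamma'})V = \sum_{j \notin \mathcal{M}} e_j V \subseteq \mathfrak{m}^{\underline{k}} V$. This gives the clean identification $V^{\leq \underline{k}} \isom (e_{\Gamma|\Gamma'} V)^{\leq \underline{k}}$.

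To establish (\ref{E:ng}), I will unfold the pushforward and substitute: the left-hand side equals $\nu(\{V : \mathcal{I}(V^{\Gamma'})^{\leq \underline{k}} \isom V_{\underline{\lambda}'}\})$, which by (i) equals $\nu(\{V : (e_{\Gamma|\Gamma'} V)^{\leq \underline{k}} \isom V_{\underline{\lambda}'}\})$, which by (ii) equals $\nu(\{V : V^{\leq \underline{k}} \isom V_{\underline{\lambda}'}\})$, the right-hand side.

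For the moment statement, the pushforward rewrites the $U$-moment of $\nu_{\Gamma'}$ as $\int_V |\Sur_{\mathfrak{o}}(V^{\Gamma'}, U)| \, d\nu(V)$. The Morita equivalence identifies $\mathfrak{o}$-surjections $V^{\Gamma'} \to U$ with $e_{\Gamma|\Gamma'}S$-surjections $e_{\Gamma|\Gamma'} V \to \mathcal{I}(U)$. Since $\mathcal{I}(U)$ is annihilated by $1 - e_{\Gamma|\Gamma'}$, every $S$-homomorphism $V \to \mathcal{I}(U)$ factors uniquely through $e_{\Gamma|\Gamma'}V$, giving $|\Sur_{\mathfrak{o}}(V^{\Gamma'}, U)| = |\Sur_S(V, \mathcal{I}(U))|$. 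The integral therefore computes the $\mathcal{I}(U)$-moment of $\nu$, which by Theorem~\ref{T:formula} equals $|(\wedge^2_{\Z_p} \mathcal{I}(U))^\Gamma[p^r]|/|\mathcal{I}(U)|^u$, precisely the claimed expression with $V := \mathcal{I}(U)$.

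The only piece of real content is fact (ii): one must check that truncation by $\mathfrak{m}^{\underline{k}}$, when $\underline{k}$ is indexed only by $\mathcal{M}$, automatically annihilates the components outside $\mathcal{M}$. Once that is in place, both assertions follow mechanically from the Morita dictionary, and I do not anticipate any further difficulty.
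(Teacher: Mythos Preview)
Your argument is correct and follows exactly the route the paper intends: the paper's entire proof is the single sentence ``The following is an immediate consequence of the Morita equivalence,'' and you have simply unpacked that sentence, supplying the details behind (\ref{E:ng}) via the identification $V^{\leq \underline{k}}\cong (e_{\Gamma|\Gamma'}V)^{\leq \underline{k}}$ and behind the moment claim via $|\Sur_{\mathfrak{o}}(V^{\Gamma'},U)|=|\Sur_S(V,\mathcal{I}(U))|$. One small notational wrinkle: when you write ``$e_j\in\mathfrak{m}_i$'' you are tacitly treating $\mathfrak{m}_i$ as the corresponding maximal two-sided ideal of $S$ (equivalently, extending $\underline{k}$ by $k_j=0$ for $j\notin\mathcal{M}$, as the paper notes just after the proposition), rather than literally as the maximal ideal of $R_{d_i}$; this is the intended reading, but it would be worth saying so explicitly.
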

The right-hand side of \eqref{E:ng} is given by an explicit formula in Theorem~\ref{T:formula} (this is just the special case of that formula when $k_i=0$ for $i \not\in \mathcal{M}$).  It then follows that the probability of an individual module $\nu_{\Gamma'}(\{ (V_{\underline{\lambda}'})^{\Gamma'}  \})$
is given by a formula that is
 the same as that given in Corollary~\ref{C:specific}, but with the product restricted to $i\in\mathcal{M}$.
Note that the terms
$$
\frac{|(\wedge^2_{\Z_p} V_{\underline{\lambda}'})^\Gamma[p^r]|}{|V_{\underline{\lambda}'}|^u|\Aut(V_{\underline{\lambda}'})|}
$$
 can all be given by explicit formulas (e.g. using Lemmas~\ref{L:selfdualmom} and \ref{L:nonselfdualmom}).
Also,  by the categorical equivalence, we have $|\Aut(V_{\underline{\lambda}'})|=|\Aut_{\mathfrak{o}}((V_{\underline{\lambda}'})^{\Gamma'})|.$

This shows that Conjecture~\ref{C} gives explicit conjectures for the distribution of Sylow $p$-subgroups of class groups of non-Galois fields in the presence of roots of unity,  for $p$ not dividing the order of the Galois closure, in particular that they are given by the measure $\nu_{\Gamma'}$ above.

\section{Experimental evidence}\label{S:data}

Malle \cite{Malle2008,Malle2010} has done extensive computations on class groups over extensions of a base field with roots of unity.
Most of his tables of computed class group data strongly support his conjectures, showing remarkably good convergence of the probabilities of various Sylow $p$-subgroups to the conjectured amount.  Since we see in Section~\ref{S:MalleCompare} 
that his conjectures are (in all cases we shall mention in this section) special cases of our conjecture, these all provide good evidence for our conjecture.  The cases for which this provides data are as follows.

\begin{enumerate}
\item $\Gamma=C_2$, $p=3$, $r=1,u=1$ $(K_0=\Q(\sqrt{-3}))$: \cite[Table 9]{Malle2008} and \cite[Table 1]{Malle2010} on $3$-ranks,  \cite[Table 2]{Malle2010} on $3$-Sylow

\item $\Gamma=C_2$, $p=5$, $r=1,u=1$ $(K_0=\Q(\zeta_5))$: \cite[Table 3]{Malle2010} on 5-ranks

\item $\Gamma=C_3$, $p=2$, $r=1,u=1$ $(K_0=\Q,\Q(\sqrt{-3}))$: \cite[Tables 3,4]{Malle2008} 
on $2$-ranks and \cite[Tables 5,6,7]{Malle2010} and \cite[Tables 7,8,9]{Malle2010} on $2$-Sylow

\item $\Gamma=C_3$, $p=2$, $r=1,u=2$ $(K_0=\Q(\sqrt{5}))$: \cite[Table 10]{Malle2010}
on $2$-ranks and \cite[Table 11]{Malle2010} on $2$-Sylow

\item $\Gamma=C_3$, $p=2$, $r=2,u=1$ $(K_0=\Q(i))$: \cite[Table 12]{Malle2010}
on $2$-ranks

\item  $\Gamma=C_5$, $p=2$, $r=1,u=1$ $(K_0=\Q)$: \cite[Table 8]{Malle2008} 
on $2$-ranks
\end{enumerate}

Further, Malle provides a chart of data for Sylow $2$-subgroups of class groups of $C_3$-extensions of $\Q(i)$ 
(so $r=2,u=1$)
for which he makes no conjecture to explain, only remarking they do not seem to fit well with any known formulas.
Our conjecture provides an excellent match for this data, which we present in Table~\ref{Table:Malle}.

\begin{table}[h!]
\centering
\begin{tabular}{|c c c c c c c c c c|} 
 \hline
 $(\lambda_1,\dots)$: & (0) & (1) & (1,1) & (2) & (1,1,1) & (2,1) & (3) & (1,1,1,1) & (2,1,1)  \\ [0.5ex] 
 D & 1 & $2^2$ & $4^2$ & $2^4$ & $8^2$ & $4^2\times 2^2$ & $2^6$ & $16^2$ & $8^2 \times 2^2$ \\ [0.5ex] 
 \hline\hline
$ \leq 10^{15}$&0.864&0.115&0.016&0.25E-2&0.12E-2&0.50E-3&0&0.7E-4&0.6E-4  \\ 
\hline
 $ \geq 10^{16}$ &0.859&0.120&0.016&0.30E-2&0.13E-2&0.42E-3&0&0.2E-3&0.4E-4\\
 $ \geq 10^{24}$ &0.854&0.123&0.017&0.40E-2&0.11E-2&0.64E-3&0.1E-4&0.7E-4&0.3E-4 \\
 $ \geq 10^{32}$ &0.853&0.125&0.017&0.40E-2&0.10E-2&0.62E-3&0.4E-4&0.6E-4&0.4E-4 \\
  \hline\hline
 Conj.~\ref{C} & 0.853 & 0.124 & 0.0166& 0.38E-2 & 0.10E-2 & 0.60E-3 & 0.38E-4 & 0.66E-4 & 0.37E-4  \\ 
    \cite[(6-2)]{Malle2010} &0.853&0.133&0.0083&0.44E-2&0.52E-3&0.34E-3&0.35E-4&0.33E-4&0.21E-4  \\ 
Cohen-Martinet &0.918&0.076&0.0048&0.03E-2&0.30E-3&0.25E-4&0.79E-7&0.19E-4&0.16E-4 \\ [1ex] 
 \hline
\end{tabular}
\caption{Malle's data on $2$-Sylow for $C_3$ extensions $K/\Q(i)$ from \cite[Table 13]{Malle2010} compared to various conjectures}
\label{Table:Malle}
\end{table}

If $K/\Q(i)$ is a $C_3$-extension, then $\Cl_{K|\Q(i)}[2^\infty]$ is a module for $\Z_2[\zeta_3]$,
where $\zeta_3$ is a primitive $3$rd root of unity.  
In Table~\ref{Table:Malle},  the columns correspond to the first few  finite $\Z_2[\zeta_3]$-modules,
with the top row labeling them in our notation assuming the module is $V_{\lambda'}$, and the second row labeling them with the notation of \cite{Malle2010}
which reflects their underlying structure as abelian groups.
Malle computed all $C_3$ extensions of $\Q(i)$ with discriminant at most $10^{15}$, and
for some other values of $D$ the first $S$ such extensions with discriminant at least $D$
 (it appears $S$ is $5\cdot 10^5$ or
$4\cdot 10^5$ in each case).  The numbers in the middle part of the table show, for each collection of extensions $K/\Q(i)$, the proportion  
with $\Cl_{K|\Q(i)}[2^\infty]$ which is the given $\Z_2[\zeta_3]$-module.
In the bottom part of the chart, one sees the proportions predicted 
by Conjecture \ref{C}, Malle's formula \cite[(6-2)]{Malle2010}, and the original conjecture of Cohen and Martinet \cite{Cohen1990}.
We see quite good agreement with Conjecture \ref{C}.  Malle recognized that the formula \cite[(6-2)]{Malle2010} was not a good match for these values, but he included this comparison so we do as well.

It would be very interesting to have more experimental evidence for Conjecture~\ref{C}.  The case of $\Gamma=C_7$, with $p=2$, and $K_0=\Q$ is particularly interesting, as it is one of the smallest cases (being about fields of absolute degree $7$) in which we have no current experimental evidence, and also the first in which there are dual pairs of representations, and so our conjecture predicts a lack of independence between representations (a new phenomenon worthy of backing up with experimental evidence).
Also $\Gamma=C_7\rtimes C_3$ has a transitive degree $7$ permutation action, and letting $\Gamma'$ be the stabilizer of a point, one could consider
the non-Galois $K$ that are the $\Gamma'$ fixed fields of $\Gamma$-extensions, with $p=2$, and $K_0=\Q$, to check a non-Galois implication of our conjecture for degree $7$ fields.
Other relatively small degree cases are degree $4$ abelian or $D_4$ extensions with
$p=3$ and $K=\Q(\zeta_3)$.
The case of $\Gamma=C_{11}$, with $p=2$, and $K_0=\Q$, is also of interest, because as we will see below in Section~\ref{S:ip}, our conjecture differs from a suggestion of Malle's in this case. Other interesting cases include $K_0=\mathbb Q(\zeta_5)$ with $\Gamma =C_4$ and $p=5$,  or $K_0=\mathbb Q(\zeta_3)$ with $\Gamma = C_8$ and $p=3$, as they have dual irreducible representations and $p$ odd.

\section{Comparison to previous conjectures in special cases}\label{S:MalleCompare}

Malle \cite{Malle2008,Malle2010} and Adam and Malle \cite{Adam2015} have made conjectures about class group distributions in the presence of roots of unity in particular situations.  In this section,  we explain how these conjectures relate to Conjecture~\ref{C}.

Let $\Gamma$ be a finite group and $\Gamma'$ be a subgroup with $\bigcap_{\gamma\in \Gamma} \gamma \Gamma' \gamma^{-1}=1.$
Let $p$ be a prime $p\nmid|\Gamma|$.
Throughout this section, we will consider the distribution of $\Cl_{K^{\Gamma'}/K_0}[p^\infty]$, as 
$K$ ranges over $\Gamma$ extensions of $K_0$ for some fixed number field $K_0$
containing $p^r$th roots of unity but not $p^{r+1}$th roots of unity, and with $u$ infinite places.
(So we can take $\Gamma'=1$ to get the case of relative class groups of Galois extensions, or other $\Gamma'$ to consider non-Galois extensions.)
We say $(\Gamma,\Gamma')$ is an \emph{absolutely irreducible pair} if the representation $\Ind_{\Gamma'}^\Gamma \C$ is the sum of one trivial representation and an irreducible representation.

The conjectures of Malle and Adam are mostly in the case when $(\Gamma,\Gamma')$ is an absolutely irreducible pair.  We have that $(S_n,S_{n-1})$ is an absolutely irreducible pair, so this covers the case of degree $n$ extensions with Galois closure with Galois group the symmetric group $S_n$, including quadratic extensions. 
However,  such pairs are rare, with only 14 examples with $|\Gamma|\leq 120$, and the only example giving Galois fields (i.e. with $\Gamma'=1$) being $\Gamma=S_2$.

When $(\Gamma,\Gamma')$ is an absolutely irreducible pair and $r=1$,  Malle \cite[Conjecture 2.1]{Malle2010}
gives a conjecture for the distribution of $\Cl_{K^{\Gamma'}/K_0}[p^\infty]$ and we show below that in this case, our conjecture specializes to 
 \cite[Conjecture 2.1]{Malle2010}. 
\footnote{Malle also makes some conjectures  when $p\mid|\Gamma|$, but we don't discuss those here.}
 When $(\Gamma,\Gamma')$ is an absolutely irreducible pair and $r\geq 1$, Adam and Malle make a conjecture
 \cite[Conjecture 5.1]{Adam2015} that does not give formulas for their conjectured  distribution of $\Cl_{K^{\Gamma'}/K_0}[p^\infty]$, but instead conjectures that 
 the distribution of $\Cl_{K^{\Gamma'}/K_0}[p^\infty]$ agrees with
 a limit of certain distributions from finite matrix groups.  In Section~\ref{S:aip}, we show that the limit of these distributions from matrix groups exists and, indeed, agrees with our Conjecture~\ref{C} and the explicit formulas we give.
 
 Malle also makes some conjectures in the cases $\Gamma=C_3,C_5$ and $p=2$ (\cite[(1),(2)]{Malle2008},\cite[(6-1),(6-2),(6-3),(6-4)]{Malle2010}), and we check in Section~\ref{S:C35}
 that Conjecture~\ref{C} agrees with all of these conjectures in these specific cases.
 
 However, Malle makes a speculation for the situation in which $\Ind_{\Gamma'}^\Gamma \Q$ is the sum of one trivial representation and an irreducible representation (over $\Q$) \cite[Section 2]{Malle2010}, and we show in Section~\ref{S:ip} that in general his speculation does not agree with Conjecture~\ref{C}.  We show below that the only Galois cases
in which 
  Malle's speculation agrees with
 Conjecture~\ref{C} are when $\Gamma$ is $C_3$ or $C_5$ and $p=2$.  Luckily, these seem to be precisely the cases in which Malle has computational evidence for his speculation (for $p\nmid |\Gamma|$).  In many cases of this situation, our conjecture is qualitatively different from  \cite[Section 2]{Malle2010}
 because we conjecture (and prove in Theorem \ref{T:da}) that certain modules occur with probability $0$ while  \cite[Section 2]{Malle2010} assigns positive probability to each module.
  
 \subsection{Absolutely irreducible pairs}\label{S:aip}
 
Let  $(\Gamma,\Gamma')$ be an absolutely irreducible pair and $p$ a prime $p\nmid |\Gamma|$.
In the notation of Section~\ref{S:NG}, we then have $|\mathcal{M}|=1$ (without loss of generality we say $\mathcal{M}=\{2\}$)
and $d_2=1$, and $m_2=1$, and $q_2=p$.  
  We have that $\mathfrak{o}=\Z_p$
 (\cite[Proposition 8.16]{Wang2021}).
The argument in \cite[Proposition 8.16]{Wang2021} shows that 
$\mathfrak{o}=\Z_p$ if and only if $(\Gamma,\Gamma')$ is an absolutely irreducible pair.
This highlights the appeal of this condition in this context, 
for when it fails, 
the groups $\Cl_{K^{\Gamma'}/K_0}[p^\infty]$  have an $\mathfrak{o}$-module structure, for some $\mathfrak{o}$ larger than $\Z_p$,
 not just abelian group structure.  
 
Thus in the  absolutely irreducible pair situation, we are looking simply for a distribution of finite $\Z_p$-modules.
For a partition $\lambda$, which we extend by $0$ to an $(m-1)$-tuple of partitions, we have the $\mathfrak{o}$-module
 $(V_\lambda)^{\Gamma'}=\prod_i \Z/p^{\lambda_i} \Z.$  We also have $|V_\lambda|=|\prod_i \Z/p^{\lambda_i} \Z|^{n_2}$
 by Equation~\eqref{E:sizechange}
 (and $n_2=[\Gamma:\Gamma']-1$).

\begin{lemma}\label{L:ai}
Let $(\Gamma,\Gamma')$ be an absolutely irreducible pair and $p$ a prime $p\nmid |\Gamma|$, then $\Gamma$ has even order, $\sigma(2)=2$,
and $\epsilon_2=1$.
\end{lemma}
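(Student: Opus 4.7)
The plan is to reduce the statement to a claim about the irreducible complex representation $W$ appearing in the decomposition $\Ind_{\Gamma'}^\Gamma \C = \C \oplus W$, establish there that $W$ is a nontrivial \emph{orthogonal} representation of an even-order group, and then transfer these facts to $V_2$ over $\F_p$ using that $p \nmid |\Gamma|$.

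First I would exhibit an invariant symmetric bilinear form on $W$. Since $\Ind_{\Gamma'}^\Gamma \C$ is a permutation representation on the cosets $\Gamma/\Gamma'$, the standard inner product making these cosets an orthonormal basis is a non-degenerate $\Gamma$-invariant symmetric bilinear form, and its restriction to the orthogonal complement of the trivial subrepresentation gives such a form on $W$. Thus $(\Sym^2 W)^\Gamma \neq 0$, and by Schur's lemma (which gives $\dim_\C(W \otimes W)^\Gamma = 1$ since $W$ is absolutely irreducible and self-dual) one concludes that $(\wedge^2 W)^\Gamma = 0$, so $W$ is orthogonal. From this I would deduce that $|\Gamma|$ is even via the Frobenius-Schur indicator: if $|\Gamma|$ were odd, the squaring map on $\Gamma$ would be a bijection, so the indicator $|\Gamma|^{-1}\sum_{g}\chi_W(g^2)$ would equal $|\Gamma|^{-1}\sum_g \chi_W(g) = 0$ by character orthogonality (using that $W$ is nontrivial), contradicting orthogonality of $W$.

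Finally I would transfer these statements to $V_2$ over $\F_p$. Because $p \nmid |\Gamma|$, the idempotent $|\Gamma|^{-1}\sum_{\gamma \in \Gamma}\gamma$ lies in $\Z_p[\Gamma]$, so the functors $(-)^\Gamma$, $\wedge^2$, and $\otimes$ all commute with base change among $\Z_p$, $\overline{\Q_p}$, and $\F_p$. Since $d_2 = 1$, the module $V_2 \otimes_{\Z_p}\overline{\Q_p}$ is absolutely irreducible and, via the Wedderburn decomposition of $\Z_p[\Gamma]$, corresponds to the unique nontrivial summand of $\Ind_{\Gamma'}^\Gamma \overline{\Q_p}$, i.e.\ to $W$. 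Hence the self-duality of $W$ and the vanishing $(\wedge^2 W)^\Gamma = 0$ both descend to $V_2 \otimes_{\Z_p}\F_p$. This gives $\sigma(2) = 2$ and $(\wedge^2_{\F_p} V_2)^\Gamma = 0$, while $(V_2 \otimes_{\F_p} V_2)^\Gamma \supseteq \End_\Gamma(V_2) = \kappa_2 \neq 0$ (using $\kappa_2 = \F_p$ since $d_2 = 1$), so by the defining trichotomy $\epsilon_2 = 1$.

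The only step needing genuine care is this last transfer: matching the $V_2$ of Section~\ref{S:Gamma} with the $\Z_p$-form of $W$ via the idempotent decomposition, and confirming that the exactness of $\Gamma$-invariants over $\Z_p$ really does let symmetric-versus-alternating information pass freely between characteristic $0$ and characteristic $p$. The remaining ingredients — the orthogonal form on a permutation representation, Schur's lemma, and Frobenius-Schur for odd-order groups — are classical.
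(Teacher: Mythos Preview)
Your proof is correct and rests on the same key observation as the paper's: the permutation representation $\Ind_{\Gamma'}^\Gamma$ carries a canonical $\Gamma$-invariant nondegenerate symmetric form, from which self-duality and orthogonality of the nontrivial summand follow. The even-order conclusion is identical in spirit (the paper just cites the fact that odd-order groups have no nontrivial self-dual irreducibles, which is your Frobenius--Schur computation).

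The one genuine difference is in how you reach $\epsilon_2=1$. You argue in characteristic~$0$ that $(\wedge^2 W)^\Gamma=0$ and then transfer to $\F_p$ by noting that, since $p\nmid|\Gamma|$, the functor $(-)^\Gamma$ is a direct summand and hence commutes with base change along $\Z_p\to\F_p$. The paper instead works directly over $\F_p$: it observes that $(\Sym^2\Ind_{\Gamma'}^\Gamma\F_p)^\Gamma$ is at least two-dimensional (the degenerate form factoring through the trivial quotient, and the standard inner product), forcing $(\Sym^2 W_p)^\Gamma\neq 0$; since $p$ is odd and $W_p$ is absolutely irreducible this immediately gives $\epsilon_2=1$. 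The paper's route is slightly shorter because it sidesteps the base-change verification, but your transfer argument is perfectly sound and has the mild advantage of making the link to the classical Frobenius--Schur theory over $\C$ explicit. One small cosmetic point: your ``$(V_2\otimes_{\F_p}V_2)^\Gamma\supseteq\End_\Gamma(V_2)$'' should really be an isomorphism via the self-duality $V_2\cong V_2^*$, not a containment.
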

\begin{proof} Any permutation representation, where the group acts by permuting basis vectors $e_i$, may be endowed with an invariant symmetric perfect pairing giving by the formula $\langle e_i, e_j \rangle = \delta_{ij}$. It follows that the permutation representation $\Ind_{\Gamma'}^\Gamma \C$ is self-dual.  If $\Ind_{\Gamma'}^\Gamma \C=\C \times W$ for an irreducible representation $W$,
since $W$ is non-trivial, it must be self-dual.  Odd order groups have no non-trivial self-dual irreducible representations,  which implies $|\Gamma|$ is even. 
Let $W_p$ be the reduction mod $p$ of the irreducible representation $W$, i.e. $W_p=( \Ind_{\Gamma'}^\Gamma \mathbb F_p) /\mathbb F_p$. We have that  $(\Sym^2 \Ind_{\Gamma'}^\Gamma \F_p)^{\Gamma}$ is at least two dimensional, since it includes the pairing on 
$\Ind_{\Gamma'}^\Gamma \F_p$ that first takes the quotient to the trivial representation, as well as the perfect pairing mentioned above, which implies
that $(\Sym^2 W_p)^{\Gamma}\ne 0$.   Since $p$ is odd, this implies $W_p$ is self-dual (i.e. $\sigma(2)=2$) and $\epsilon_2=1$.
\end{proof}

\subsubsection{$r=1$}
First,we  assume $r=1$.
We then see that in this case (absolutely irreducible pair and $r=1$) 
the formula for the measure $\nu_{\Gamma'}$ of Proposition~\ref{P:NG} simplifies significantly.
We have, for a partition $\lambda$,  and group $U= \prod_i \Z/p^{\lambda'_i} \Z$, our conjecture for the distribution of $\Cl_{K^{\Gamma'}/K_0}[p^\infty]$  is given by
\begin{align}\label{E:ai}
\nu_{\Gamma'}(\{ U\})=
\frac{p^{\binom{\lambda_1}{2}}
}{|U|^{un_2}|\Aut(U)|}
\prod_{\ell\geq 0}(1+ p^{-un_2-1-\ell})^{-1} 
\prod_{\ell=  1}^{  \lambda_{1} } (1 - p^{ - un_2- \ell } ).
\end{align}
Now we compare this to Malle's \cite[Conjecture 2.1]{Malle2010}.
(We assume that  \cite[Conjecture 2.1]{Malle2010} has a typo and the product in the numerator should start at $i=u+1$ since
this modification is what is shown in the paper of Adam and Malle \cite[Example 5.3]{Malle2010}, where it is said to be exactly 
 \cite[Conjecture 2.1]{Malle2010}.)  We need to compute the $u$ in Malle's formula (which is not quite our $u$).
 Since we have that $K/K_0$ is split at all infinite places by Remark~\ref{R:scatinf}, we have that Malle's $\chi_E$ is the character that is
 the sum of a copy of the character of the regular representation for each infinite place of $K_0$ minus one trivial character.  It follows that Malle's $u$ is our $un_2$ (in the absolutely irreducible pair case).  Then we see that Equation~\eqref{E:ai} exactly agrees with \cite[Conjecture 2.1]{Malle2010} (with the typo corrected).
 
 Even though both these conjectures imply a conjecture on $p$-ranks, 
as a double check,  
  we compare our conjectured  formula for the distribution of $p$-ranks to Malle's \cite[Proposition 2.2]{Malle2010}.
  To obtain our formula, we apply Proposition~\ref{P:NG} with $k=1$ (leading us to Theorem~\ref{T:formula} with $\underline{k}=(1,0,0,\dots)$.
\begin{align}\label{E:ai2}
\nu_{\Gamma'}(\{ U | \rk_p U =\lambda \})=
\frac{p^{\binom{\lambda}{2}}
}{|(\Z/p\Z)^\lambda|^{un_2}|\Aut((\Z/p\Z)^\lambda)|}
\prod_{\ell\geq 0}(1+ p^{-un_2-1-\ell})^{-1} .
\end{align} 
This agrees with the distribution of \cite[Proposition 2.2]{Malle2010}.

\subsubsection{$r\geq 1$} 
 Now we consider a general $r\geq 1$ (still with an absolutely irreducible pair $(\Gamma,\Gamma')$ and $p\nmid |\Gamma|$).  
In this case, Adam and Malle \cite[Conjecture 5.1]{Adam2015} conjecture that $\Cl_{K^{\Gamma'}/K_0}[p^\infty]$ is distributed as the limit of certain distributions coming from matrix groups, and we describe those distributions here.  Given a prime $p$, and positive integers $r\leq k,g$, let 
$\Sp^{(r)}_{2g}(\Z/p^k\Z)$ be the subgroup of matrices of $ \GL_{2g}(\Z/p^k\Z) $, that when reduced mod $p^r$ are in
$\Sp_{2g}(\Z/p^r\Z)$.  In other words, these are matrices that are symplectic mod $p^r$.  Adam and Malle define a distribution
$\nu^r_{AM}$ of finite abelian $p$-groups such that
\begin{equation}\label{E:AM}
\nu^r_{AM}(\{G\}):=\lim_{g\ra\infty} \frac{|\{A\in \Sp^{(r)}_{2g}(\Z/p^k\Z) \,|\, \ker(A-I_{2g})\isom G |}{|\Sp^{(r)}_{2g}(\Z/p^k\Z)|},
\end{equation}
where $I_{2g}$ is the $2g\times 2g$ identity matrix, and we take the kernel of $A-I_{2g}$ as a map $(\Z/p^k\Z)^{2g} \ra (\Z/p^k\Z)^{2g}$ and $k$
is any positive integer such that $p^{k-1}G=0$.
As far as we can tell, they do not show that these limits exist or that they give a probability distribution, but we will show that here.
\begin{lemma}
For $r\geq 1$,  the limits defining $\nu^r_{AM}$ exist and define a probability measure on the set of (isomorphism classes of) finite abelian $p$-groups
 such that $\int_G |\Sur(G,H)|   d\nu^r_{AM}=|\wedge^2 H[p^r]|$ for every finite abelian $p$-group $H$. 
\end{lemma}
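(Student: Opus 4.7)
The plan is to apply Corollary~\ref{C:finsup} to the sequence of probability measures $\nu^r_{AM,g}$ given by the empirical distribution of $\ker(A - I_{2g})$ as $A$ ranges uniformly over $\Sp^{(r)}_{2g}(\Z/p^k)$ (with $k$ chosen large enough that $p^k H = 0$ for the $H$ of interest; one also checks the limit is independent of such $k$). Two things must be checked: (i) for each finite abelian $p$-group $H$, the $H$-moments converge,
\[
\frac{1}{|\Sp^{(r)}_{2g}(\Z/p^k)|} \sum_{A \in \Sp^{(r)}_{2g}(\Z/p^k)} |\Sur(\ker(A - I_{2g}), H)| \longrightarrow |\wedge^2 H[p^r]|
\]
as $g \to \infty$; and (ii) the values $M_H := |\wedge^2 H[p^r]|$ are well-behaved and are the moments of some probability measure supported on finite abelian $p$-groups. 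Given these, Corollary~\ref{C:finsup} gives weak convergence of $\nu^r_{AM,g}$ in the discrete topology (hence pointwise) to the unique measure $\nu$ with these moments, so the limits \eqref{E:AM} exist and define $\nu^r_{AM} = \nu$, a probability measure with the claimed moments.

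The main content is (i), a direct combinatorial calculation in the symplectic group. Writing $A = I + B$, the condition $A \in \Sp^{(r)}$ translates to $I + B \in \GL_{2g}$ together with $B^T \omega + \omega B + B^T \omega B \equiv 0 \pmod{p^r}$, and $\ker(A-I) = \ker B$. The key algebraic input is the identity $\omega(\ker B, \im B) \subseteq p^r(\Z/p^k)$: for $v \in \ker B$ and $u \in V$, $\omega(v, Au) \equiv \omega(Av, Au) \equiv \omega(v, u) \pmod{p^r}$, whence $\omega(v, (A-I)u) \equiv 0 \pmod{p^r}$. Consequently $\omega$ descends to an alternating $\Z/p^r$-valued pairing on $\ker B / (\ker B \cap \im B)$. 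Reorganizing $\sum_A |\Sur(\ker B, H)|$ as a count of pairs $(A, \phi)$ with $\phi: \ker B \twoheadrightarrow H$, and sorting by the induced alternating form $\alpha : H \times H \to \Z/p^r$, one reduces the count to enumerating alternating forms $\alpha$ times a fiber count for each $\alpha$. By the standard Pontryagin-duality identity $|\Hom(\wedge^2 H, \Z/p^r)| = |\wedge^2 H[p^r]|$ for finite abelian $p$-groups, the number of forms $\alpha$ is exactly $|\wedge^2 H[p^r]|$, and an equidistribution argument valid as $g \to \infty$ shows that each form contributes equally in the limit, yielding the stated moment.

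For (ii), well-behavedness follows from $|\wedge^2 H[p^r]| \le |\wedge^2 H| \le |H|^{\rk(H)}$ combined with the rapid growth of $|\Aut(H)|$, giving absolute convergence of the sums $v_{\underline{k}, N}$ in a manner parallel to the estimates in Propositions~\ref{P:selfdualvs} and~\ref{P:DVRdualpairs}. Existence of a probability measure on finite abelian $p$-groups with these moments follows \emph{a posteriori} from the robustness conclusion of Theorem~\ref{T:mom}, together with a uniform-in-$g$ tail bound preventing mass from escaping to modules of unbounded rank in the weak limit.

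The main obstacle is the equidistribution step in (i): showing that each alternating form $\alpha : H \times H \to \Z/p^r$ arises from asymptotically the same leading-order count of pairs $(A, \phi)$ as $g \to \infty$. This is a quantitative orbit analysis for the action of $\Sp^{(r)}_{2g}$ on the space of surjections $\ker B \twoheadrightarrow H$ decorated with compatible alternating forms, with careful accounting for surjections $\phi$ whose kernels do not contain $\ker B \cap \im B$ (which require an inclusion-exclusion to relate to the compatible-form count). This is accessible via an explicit parametrization of $\Sp^{(r)}_{2g}$ at the Lie-algebra level together with a comparison to the total count $|\Sp^{(r)}_{2g}(\Z/p^k)|$.
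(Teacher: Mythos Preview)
Your overall framework is sound: reduce to showing moment convergence plus well-behavedness, then invoke robustness (Theorem~\ref{T:mom} or Corollary~\ref{C:finsup}). For (ii), well-behavedness and existence of a probability measure with moments $|\wedge^2 H[p^r]|$ are exactly what Proposition~\ref{P:DVRselfdual} supplies with $R=\Z_p$, $s=\epsilon=0$; no separate tail bound is needed, since robustness already produces the limiting measure from moment convergence alone.

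The genuine gap is in (i). Your equidistribution claim --- that each alternating form $\alpha\colon H\times H\to\Z/p^r$ arises from asymptotically the same number of pairs $(A,\phi)$ --- is the heart of the matter, and you have not proved it. The route via descending $\omega$ to $\ker B/(\ker B\cap\im B)$ and then pushing forward along $\phi$ only makes sense when $\ker\phi\supseteq\ker B\cap\im B$, and the inclusion-exclusion you allude to for the remaining $\phi$ is not carried out. As written, the argument is a program rather than a proof.

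The paper bypasses this analytic step entirely. First replace $\ker(A-I)$ by $\coker(A-I)$ (isomorphic for square matrices over $\Z/p^k$), and lift to Haar measure on $\Sp^{(r)}_{2g}(\Z_p)$. Then observe that $\phi\in\Sur(\Z_p^{2g},H)$ is fixed by $A$ acting via precomposition if and only if $\phi$ factors through $\coker(A-I)$; so by Burnside's lemma the average of $|\Sur(\coker(A-I),H)|$ over $A$ equals the number of $\Sp^{(r)}_{2g}(\Z_p)$-orbits on $\Sur(\Z_p^{2g},H)$. This converts the moment computation into an \emph{exact} orbit count, with no asymptotic equidistribution required. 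For $g$ large, the $\Sp_{2g}(\Z_p)$-orbits are the fibers of $\phi\mapsto\sum_{i=1}^g\phi(x_i)\wedge\phi(x_{i+g})\in\wedge^2 H$ by \cite[Theorem 2.14]{Michael2006}; an explicit elementary construction then shows the $\Sp^{(r)}_{2g}(\Z_p)$-orbits are the fibers of the reduction to $\wedge^2(H/p^rH)$, whose cardinality is $|\wedge^2 H[p^r]|$. Your alternating-form invariant is exactly the right one, but Burnside plus orbit classification replaces the equidistribution argument you were facing.
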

\begin{proof}
First, we note that by Proposition~\ref{P:DVRselfdual} there is a probability measure  $\nu^r$
on the set of finite abelian $p$-groups
with these moments, and the moments are well-behaved.
For $B$ a $2g\times 2g$ matrix over $\Z/p^k\Z$, we have $\ker(B)\isom \cok(B)$, and so we can replace 
$\ker(A-I_{2g})$ with $\coker(A-I_{2g})$. 
When $p^{k-1}G=0$,  the cokernel of a $\Z_p$ linear map $B :\Z_p^{2g} \ra \Z_p^{2g}$
 is isomorphic to $G$ if and only if the same is true for the reduction of $\bar{B}: (\Z/p^k\Z)^{2g} \ra (\Z/p^k\Z)^{2g}$ mod $p^k$.
So we replace $\Z/p^k\Z$ with $\Z_p$ (using the analogous definition for 
 $\Sp^{(r)}_{2g}(\Z_p)$),  and counting measure on  $\Sp^{(r)}_{2g}(\Z/p^k\Z)$ with Haar measure on
  $\Sp^{(r)}_{2g}(\Z_p)$.
 
Let $\nu_g$ be the distribution of $\coker(A-I_{2g})$ for a Haar random $A\in \Sp^{(r)}_{2g}(\Z_p)$.  
By Theorem \ref{T:mom},  if we show that 
\begin{equation}\label{E:getlimmom}
\lim_{g\ra\infty} 
\int_G |\Sur(G,H)|   d\nu_{g}=|\wedge^2 H[p^r]|,
\end{equation}
for every finite abelian $p$-group $H$, then the $\nu_g$ weakly converge to $\nu^r$, and since the delta function on a single group is continuous,
the limits defining $\nu^r_{AM}(\{G\})$ exist and we have $\nu^r_{AM}(\{G\})=\nu^r(\{G\})$, which would complete the proof of the lemma.

To show \eqref{E:getlimmom}, we have that  $\Sp^{(r)}_{2g}(\Z_p)$ acts on $\Sur(\Z_p^{2g},H)$, and the fixed points of 
$A\in \Sp^{(r)}_{2g}(\Z_p)$ are precisely
the elements of 
$\Sur(\coker(A-I_{2g}),H)$.  By Burnside's Lemma, we then have that the average size of $\Sur(\coker(A-I_{2g}),H)$ over $A\in \Sur(\Z_p^{2g},H)$ is the
number of orbits of $\Sp^{(r)}_{2g}(\Z_p)$ on $\Sur(\Z_p^{2g},H)$.
By \cite[Theorem 2.14]{Michael2006}, we have that the orbits of $\Sp_{2g}(\Z_p)$ on $\Sur(\Z_p^{2g},H)$, for $g$ sufficiently large given
$H$, are precisely the fibers of a map
$$
F: \Sur(\Z_p^{2g},H) \ra \wedge^2 H
$$
that we now describe.  Let $x_1,\dots,x_{2g}$ be a standard basis of $\Z_p^{2g}$, and then $F(\phi)=\sum_{i=1}^{g} \phi(x_i) \wedge \phi(x_{i+g})$.
The same statement for $H/p^r H$, tells us that 
$$
\bar{F}: \Sur(\Z_p^{2g},H) \ra \wedge^2 (H/p^rH)
$$
is invariant on $\Sp^{(r)}_{2g}(\Z_p)$ orbits.   

Since $| \wedge^2 (H/p^rH)|=|\wedge^2 (H) [p^r]|$, we will be done if we can show
that each fiber of $\bar{F}$ is a single $\Sp^{(r)}_{2g}(\Z_p)$ orbit.  
Consider two elements $a,b$ in the same fiber of $\bar{F}$.  
We pick a minimal generating set $h_1,\dots,h_s$ of $H$.
We write $F(a)=\sum_{i<j} a_{i,j} h_i\wedge h_j$
 and
$F(b)=\sum_{i<j} b_{i,j} h_i\wedge h_j$ for $a_{i,j},b_{i,j}\in \Z_p$ such that $a_{i,j} \equiv b_{i,j} \pmod{p^r}$ for all $i,j$.
We chose a bijection $u : \{i \,|\, 1\leq i \leq \binom{s}{2} \} \ra \{(i,j) \,|\, 1\leq i<j \leq s \} $ and write $u_1(i),u_2(i)$ for the two components.
For $g$ large enough, given $H$ (as above and so $g\geq s+\binom{s}{2}$), 
we can choose a surjection  $ \phi_a \in \Sur(\Z_p^{2g},H)$ such that 
\begin{itemize}
\item $\phi_a(x_i)=h_i$ for $1\leq i \leq s$,
\item $\phi_a(x_{s+i})= a_{u(i)} h_{u_1(i)}$ for $1\leq i \leq \binom{s}{2}$,
\item $\phi_a(x_i)=0$ for $s+\binom{s}{2}<i \leq g$, 
\item $\phi_a(x_{g+i})=0$ for $1\leq i \leq s$ and $s+\binom{s}{2}<i \leq g$,
and
\item $\phi_a(x_{g+s+i})=  h_{u_2(i)}$ for $1\leq i \leq \binom{s}{2}$.
\end{itemize}
We have $F(\phi_a)=F(a)$, and so $\phi_a$ and $a$ are in the same
$\Sp_{2g}(\Z_p)$ orbit and hence the same $\Sp^{(r)}_{2g}(\Z_p)$ orbit.
We choose $\phi_b$ similarly.  
There is a $\Z_p$-module map sending $x_{s+i}\mapsto x_{s+i} +(b_{u(i)}-a_{u(i)})x_{u_1(i)}$ for $1\leq i \leq \binom{s}{2}$
and mapping all other $x_i\mapsto x_i$.  This map  is the identity mod $p^r$ and takes $\phi_a$ to $\phi_b$.
Thus we conclude $\phi_a$ and $\phi_b$ (and hence $a$ and $b$) are in the same $\Sp^{(r)}_{2g}(\Z_p)$ orbit, which concludes the proof.
\end{proof}

Proposition~\ref{P:DVRselfdual} gives explicit formulas for $\nu^r_{AM}$, and since we apply the proposition in the 
case that $\epsilon=s=0$, we even have nice product formulas for this measure using Lemma~\ref{R:e0}, for any $r\geq 1$.
Adam and Malle noted it was possible to calculate $\nu^r_{AM}(G)$ explicitly for any given $r$ and $G$ \cite[Remark 3.5]{Adam2015}, but did not have a closed formula for $r\geq 3$.

Finally, we are ready to state Adam and Malle's conjecture \cite[Conjecture 5.1]{Adam2015}.  
For  an absolutely irreducible pair $(\Gamma,\Gamma')$,  Adam and Malle \cite[Conjecture 5.1]{Adam2015} conjecture that $\Cl_{K^{\Gamma'}/K_0}[p^\infty]$ is distributed like the quotient of a random group from
$\nu^r_{AM}(G)$ by $un_2$ uniform, independent random elements (as above Malle's $u$ is our $un_2$).
Such a random group has $H$-moment $ |\wedge^2 H[p^r]| |H|^{-un_2}$ for every $H$, and by Proposition~\ref{P:DVRselfdual} there is a unique distribution with these moments.   By Proposition~\ref{P:NG} and Lemma~\ref{L:selfdualmom}, we see that Conjecture~\ref{C}
predicts the distribution with the same moments, and so our conjectures agree in this case.

In the special case when $\Gamma=\Z/2\Z$, there is also a conjecture \cite[Conjecture 1.2]{Lipnowski2020} which gives a distribution for the class group together with extra invariants $\omega, \psi$. Ignoring the extra invariants, \cite[Conjecture 1.2]{Lipnowski2020} implies a conjecture simply for the distribution of the class group. This agrees with Conjecture~\ref{C}. To check this, it suffices to check that the moments of the distribution $Q^t\mu$ of \cite[Conjecture 1.2]{Lipnowski2020} agree with the moments described in Conjecture~\ref{C}, since we have seen that these moments uniquely determine a distribution. The $(G, \omega_G,\psi_G)$-moment of $Q^t\mu$ is calculated in \cite[Lemma 8.12 and Theorem 2.3]{Lipnowski2020} as $\frac{1}{ |G|^t | \Sym^2 (G)[\ell^n] |}$ for each possible value of the invariants $\omega_G ,\psi_G$. It is straightforward to check by writing $G$ as a product of cyclic groups that there are $ | \Sym^2 (G)[\ell^n] |$ possible values of $\psi_G$ for each $\omega_G$ and $| \wedge^2(G) [\ell^n]|$ possible values of $\omega_G$, so summing over these possibilities, the $G$-moment is $\frac{ | \wedge^2(G)[\ell^n]|}{|G|^t}$. This agrees with Conjecture~\ref{C} since $t$ and $u$ are both defined to equal the number of infinite places.

\subsection{$C_3,C_5$}\label{S:C35}

Now let $\Gamma$ be $C_\ell$ for $\ell=3,5$ (and $\Gamma'=1$) and $p=2$. 
 In this case,  there is only one non-trivial indecomposable module
$V_2$.
We have $n_2=1$ and $d_2=\ell-1$ and $\epsilon_2=0$.  
When $r=1$, our conjecture for the distribution of $\Cl_{K^{\Gamma'}/K_0}[p^\infty]$
(from Conjecture~\ref{C}, using Corollary~\ref{C:specific}, Lemma~\ref{L:selfdualmom} for the formulas) is
\begin{align*}
&\nu(\{ V_{\underline{\lambda}'} \})=
\frac{2^{\frac{\ell-1}{2}\lambda_1^2}}{|V_{\underline{\lambda}'}|^u|\Aut(V_{\underline{\lambda}'})|}
\prod_{i\geq 0}(1+ (2^{\ell-1})^{-u-\frac{1}{2}-i})^{-1} 
\prod_{i=  1}^{  \lambda_{1} } (1 - (2^{\ell-1})^{ - u- i } ). 
\end{align*}
Note that 
$$
\prod_{i\geq 0}(1+ (x^2)^{-\frac{1}{2}-i})=\prod_{i\geq 0}(1+ x^{-2i-1})=\frac{\prod_{i\geq 0}(1-(x^2)^{-2i-1})}{\prod_{i\geq 0}(1- x^{-2i-1})}
=\frac{\prod_{i\geq 0}(1-(x^2)^{-i})}{\prod_{i\geq 0}(1-(x^2)^{-2i})}\cdot\frac{\prod_{i\geq 0}(1- x^{-2i})}{\prod_{i\geq 0}(1- x^{-i})}.
$$
From this one can check that Malle's conjectures \cite[(6-2),(6-4)]{Malle2010} in the cases $u=1,2$ agree with our conjecture for $C_3$  and $r=1$.
It then formally follows that Malle's conjectures \cite[(6-1),(6-3)]{Malle2010} for the $2$-ranks agree with ours (and it is also  easily verified using 
Theorem~\ref{T:ptors}).
One can also check that Malle's conjecture \cite[Equation (2)]{Malle2008} agrees with our conjecture for $C_5$ and $u=r=1$. 
Malle \cite[Section 6.4]{Malle2010} suggests that the $2$-rank prediction for $u=1$
\cite[(6-1)]{Malle2010} 
should still hold for $r>1$,
which agrees with our conjecture (see Remark~\ref{R:rind}).

 \subsection{Irreducible pairs}\label{S:ip}
We say $(\Gamma,\Gamma')$ is an \emph{ irreducible pair} if the representation $\Ind_{\Gamma'}^\Gamma \Q$ is the sum of one trivial representation and an irreducible representation $W$ (over $\Q$), and moreover that $W$ has Schur index $1$ (so $\End_\Gamma(W)$ is a field and not just a division algebra).

\subsubsection{Irreducible over $\Q_p$}
Let $(\Gamma,\Gamma')$ be an irreducible pair.
We first consider the case when $W_p:=W\tensor_{\Q} \Q_p$ is irreducible over $\Q_p$.
In the notation of Section~\ref{S:NG}, we then have $|\mathcal{M}|=1$ (without loss of generality we say $\mathcal{M}=\{2\}$).
Let $d=d_2$ and $q=q_2=p^d$.  We have $m_2=1$.
As in the proof of Lemma~\ref{L:ai}, we have that $W_p$ is self-dual.
By \cite[Section 8.3]{Wang2021}, we have that $\mathfrak{o}$ is the maximal order in the field $\End_{\Gamma} W_p$.
Then our conjecture for the distribution of $\Cl_{K^{\Gamma'}/K_0}[p^\infty]$, in the case $r=1$, is given by,
 for $U=V_{\underline{\lambda}'}^{\Gamma'}$,
\begin{align}
\nu_{\Gamma'}(\{U \})&= \notag
\frac{|(\wedge^2_{\Z_p} V_{\underline{\lambda}'})^\Gamma[p]|}{|V_{\underline{\lambda}'}|^u|\Aut_{\mathfrak{o}}(U)|}
\prod_{\ell\geq 0}(1+ q^{-un_2-\frac{\epsilon_2+1}{2}-\ell})^{-1} 
\prod_{\ell=  1}^{  \lambda_{1} } (1 - q^{ - un_2- \ell} )\\
&=\frac{
q^{
\binom{\lambda_1}{2} 
+(1-\epsilon_2)\frac{\lambda_1}{2} }
}{|U|^{n_2u}|\Aut_{\mathfrak{o}}(U)|}
\prod_{\ell\geq 0}(1+ q^{-un_2-\frac{\epsilon_2+1}{2}-\ell})^{-1} 
\prod_{\ell=  1}^{  \lambda_{1} } (1 - q^{ - un_2- \ell } ). \label{E:ourspec}
\end{align}
using Proposition~\ref{P:NG} and Theorem~\ref{T:formula} for the first line, and Lemma~\ref{L:selfdualmom} and \eqref{E:sizechange} for the second line.
In \cite[Section 2]{Malle2010}, for $r=1$ Malle suggests that the probability for $U$ should be 
\begin{equation}\label{E:Mallespec}
c' 
d^{\lambda_1}
\frac{q^{\binom{\lambda_1}{2}} }{|U|^{n_2u}|\Aut_{\mathfrak{o}}(U)|}
 \prod_{\ell=1}^{\lambda_1} (1-   q^{-un_2-\ell}),
\end{equation}
for some unspecified $c'$ not depending on $U$. (We have absorbed some  other constant parts of the formula into
the unspecified constant from \cite{Malle2010}. Also, one can check that given that $W_p$ is irreducible,
our $\mathfrak{o}$ is $\mathcal{O}\tensor_\Z \Z_p$ for Malle's $\mathcal{O}$, and so our $d$ agrees with Malle's $d$.)
In order for \eqref{E:ourspec} and \eqref{E:Mallespec} to agree, we must have
$$
p^{\frac{1-\epsilon_2}{2} d }=d.
$$
Since $d$ is a positive integer, and $p$ is a prime, the above equality holds (i.e. our conjectures agree) exactly when
\begin{itemize}
\item $\epsilon_2=1$ and $d=1$ (so $W_p$ is absolutely irreducible over $\Q_p$ and hence $\Q$ and we have an absolutely irreducible pair, as discussed above), or
\item $\epsilon_2=0$ and $p=2$ and $d=2,4$.
\end{itemize}
If we are interested in the Galois case (so $\Gamma'=1$), then $d=|\Gamma|-1$, and the only possibilities where $d=2,4$ are the $\Gamma=C_3,C_5$ examples discussed above.

\subsubsection{Reducible over $\Q_p$}
Let $(\Gamma,\Gamma')$ be an irreducible pair.
If $W_p$ is reducible over $\Q_p$, then that means $p$ splits in the field $\End(W)$.  Let $\mathcal{O}$ be the maximal order of $\End(W)$.
Malle \cite[Section 2]{Malle2010} gives a suggestion in terms of the ``$\mathcal{O}$-rank'' of the $p$-group $H$.  Since $\O/p$ is not a field, it is not clear to the authors what this rank means.  Let us consider $\Gamma=C_7$ and $\Gamma'=1$ and $p=2$.  
We have $\mathcal{O}=\mathfrak{o}=\Z[\zeta_7]$, but $\mathcal{O}\tensor_\Z \Z_2=R_1\times R_2$ for two extensions $R_i$ of degree $3$ over $\Z_2$
(both isomorphic to the same unramified extension).   
We have $q=q_2=2^3$.
In the case $r=1$, our conjecture now gives probabilities
\begin{align*}
&\nu(\{ V_{\underline{\lambda}'} \})=
\frac{q^{\lambda^2_1\lambda_1^3}}{|V_{\underline{\lambda}'}|^u|\Aut(V_{\underline{\lambda}'})|}
\frac{\eta_{q}(\infty)\eta_{q}(2u)}{\eta_{q}(u-\lambda^2_{1}+\lambda^{3}_{1})\eta_{q}(u+\lambda^2_{1}-\lambda^{3}_{1})}
 \prod_{\ell= 1}^{  \lambda^2_{1} } (1 - q^{ - u- \ell } )
\prod_{\ell=  1}^{  \lambda^{3}_{1} } (1 - q^{ - u- \ell } ),
\end{align*}
if  
 $|\lambda^2_{1} -\lambda^{3}_{1}|\leq u$ 
and $\nu(\{ V_{\underline{\lambda}'} \})=0$ otherwise.
We consider $4$ $2$-torsion $\mathcal{O}$-modules: $1$, $R_1/2$, $R_2/2$, $R_1/2\times R_2/2$.
These correspond to the tuples of partitions $(0,0),((1),0)(0,(1)),((1),(1))$.  
If we look at the quantities   $\nu(\{ V\}) |V_{}|^u|\Aut(V_{})|$ for our four modules, they are in ratios 
$$
1 : 1-2^{-3u}
: 1-2^{-3u}
: 2^3(1-2^{3(-u-1)})^2.
$$
while in \cite[Section 2]{Malle2010}, the corresponding ratio between a rank $0$ group and a rank $1$ group is
$$
1: 6(1-2^{6(-u-1)}).
$$
We do not see a way to interpret ``$\mathcal{O}$-rank'' as to make our conjecture align with Malle's suggestion.  

More qualitatively,
our conjecture assigns  probability  $0$ to any $V_{\underline{\lambda}'} $ with  $|\lambda^2_{1} -\lambda^{3}_{1}|> u$, e.g. if $u=1$,
we assign probability $0$ to any group which is rank $0$ for $R_1$ and rank $2$ for $R_2$.  In contrast,
\cite[Section 2]{Malle2010} suggests that every $2$-group $\O$-module occurs with positive probability.
So in this setting our Conjecture~\ref{C}, and Theorem \ref{T:da}, do not agree with Malle's suggestion.


\begin{thebibliography}{BVVE21}

\bibitem[Ach06]{Achter2006}
Jeffrey~D. Achter.
\newblock The distribution of class groups of function fields.
\newblock {\em Journal of Pure and Applied Algebra}, 204(2):316--333, February
  2006.

\bibitem[AM15]{Adam2015}
Michael Adam and Gunter Malle.
\newblock A class group heuristic based on the distribution of 1-eigenspaces in
  matrix groups.
\newblock {\em Journal of Number Theory}, 149:225--235, April 2015.

\bibitem[BL20]{Bartel2020}
Alex Bartel and Hendrik~W. Lenstra.
\newblock On class groups of random number fields.
\newblock {\em Proceedings of the London Mathematical Society},
  121(4):927--953, 2020.

\bibitem[BV15]{Bhargava2015d}
Manjul Bhargava and Ila Varma.
\newblock On the mean number of 2-torsion elements in the class groups, narrow
  class groups, and ideal groups of cubic orders and fields.
\newblock {\em Duke Mathematical Journal}, 164(10):1911--1933, 2015.

\bibitem[BV16]{Bhargava2016}
Manjul Bhargava and Ila Varma.
\newblock The mean number of 3-torsion elements in the class groups and ideal
  groups of quadratic orders.
\newblock {\em Proceedings of the London Mathematical Society},
  112(2):235--266, March 2016.

\bibitem[BVVE21]{Breen2021}
Benjamin Breen, Ila Varma, John Voight, and appendix with~Noam Elkies.
\newblock On unit signatures and narrow class groups of odd degree abelian
  number fields.
\newblock (arXiv:1910.00449), April 2021.



\bibitem[CL84]{Cohen1984}
Henri Cohen and Hendrik~W. Lenstra, Jr.
\newblock Heuristics on class groups of number fields.
\newblock In {\em Number Theory ({{Noordwijkerhout}}, 1983)}, volume 1068 of
  {\em Lecture {{Notes}} in {{Math}}.}, pages 33--62. {Springer}, {Berlin},
  1984.

\bibitem[CM90]{Cohen1990}
Henri Cohen and Jacques Martinet.
\newblock \'{E}tude heuristique des groupes de classes des corps de nombres.
\newblock {\em Journal f\"ur die Reine und Angewandte Mathematik}, 404:39--76,
  1990.

\bibitem[CR81]{CurtisReiner1}
Charles~W. Curtis and Irving Reiner.
\newblock {\em Methods of representation theory -- with applications to finite groups and orders,} vol. 1
\newblock {Pure \& Applied Mathematics, John Wiley \& Sons, New York, 1981}

\bibitem[CR87]{CurtisReiner2}
Charles~W. Curtis and Irving Reiner.
\newblock {\em Methods of representation theory -- with applications to finite groups and orders,} vol. 2
\newblock {Pure \& Applied Mathematics, John Wiley \& Sons, New York, 1987}



\bibitem[EVW16]{Ellenberg2016}
Jordan~S. Ellenberg, Akshay Venkatesh, and Craig Westerland.
\newblock Homological stability for {{Hurwitz}} spaces and the
  {{Cohen-Lenstra}} conjecture over function fields.
\newblock {\em Annals of Mathematics. Second Series}, 183(3):729--786, 2016.

\bibitem[Gar15]{Garton2015}
Derek Garton.
\newblock Random matrices, the {{Cohen}}\textendash{{Lenstra}} heuristics, and
  roots of unity.
\newblock {\em Algebra \& Number Theory}, 9(1):149--171, February 2015.

\bibitem[Gra98]{Gras1998}
Georges Gras.
\newblock Th\'eor\`emes de r\'eflexion.
\newblock {\em Journal de th\'eorie des nombres de Bordeaux}, 10(2):399--499,
  1998.

\bibitem[Kla17a]{Klagsbrun2017}
Zev Klagsbrun.
\newblock The average sizes of two-torsion subgroups in quotients of class
  groups of cubic fields.
\newblock {\em arXiv:1701.02838 [math]}, January 2017.

\bibitem[Kla17b]{Klagsbrun2017a}
Zev Klagsbrun.
\newblock Davenport-{{Heilbronn Theorems}} for {{Quotients}} of {{Class
  Groups}}.
\newblock {\em arXiv:1701.02834 [math]}, January 2017.

\bibitem[Leo58]{Leopoldt1958}
Heinrich-Wolfgang Leopoldt.
\newblock {Zur Struktur der l-Klassengruppe galoisscher Zahlk\"orper.}
\newblock {\em Journal f\"ur die reine und angewandte Mathematik},
  199:165--174, 1958.

\bibitem[Liu22]{Liu2022}
Yuan Liu.
\newblock Non-abelian {{Cohen--Lenstra Heuristics}} in the presence of roots of
  unity.
\newblock (arXiv:2202.09471), February 2022.

\bibitem[LST20]{Lipnowski2020}
Michael Lipnowski, Will Sawin, and Jacob Tsimerman.
\newblock Cohen-{{Lenstra}} heuristics and bilinear pairings in the presence of
  roots of unity.
\newblock {\em arXiv:2007.12533 [math]}, July 2020.

\bibitem[LWZ19]{Liu2019}
Yuan Liu, Melanie~Matchett Wood, and David {Zureick-Brown}.
\newblock A predicted distribution for {{Galois}} groups of maximal unramified
  extensions.
\newblock {\em arXiv:1907.05002 [math]}, July 2019.

\bibitem[Mac15]{Macdonald2015}
I.~G. Macdonald.
\newblock {\em Symmetric Functions and {{Hall}} Polynomials}.
\newblock Oxford {{Classic Texts}} in the {{Physical Sciences}}. {The Clarendon
  Press, Oxford University Press, New York}, second edition, 2015.

\bibitem[Mal08]{Malle2008}
Gunter Malle.
\newblock Cohen\textendash{{Lenstra}} heuristic and roots of unity.
\newblock {\em Journal of Number Theory}, 128(10):2823--2835, October 2008.

\bibitem[Mal10]{Malle2010}
Gunter Malle.
\newblock On the distribution of class groups of number fields.
\newblock {\em Experimental Mathematics}, 19(4):465--474, 2010.

\bibitem[Mic06]{Michael2006}
A.~A.~George Michael.
\newblock Finite {{Abelian}} actions on surfaces.
\newblock {\em Topology and its Applications}, 153(14):2591--2612, August 2006.

\bibitem[Ser77]{Serre1977}
Jean-Pierre Serre.
\newblock {\em Linear {{Representations}} of {{Finite Groups}}}.
\newblock Graduate {{Texts}} in {{Mathematics}}. {Springer-Verlag}, {New York},
  1977.

\bibitem[Smi22a]{Smith2022}
Alexander Smith.
\newblock The distribution of $\ell^\infty$-{{Selmer}} groups in degree $\ell$
  twist families.
\newblock {\em arXiv.2207.05674}, July 2022.

\bibitem[Smi22b]{Smith2022a}
Alexander Smith.
\newblock The distribution of fixed point {{Selmer}} groups in twist families.
\newblock {\em arXiv.2207.05143}, July 2022.

\bibitem[SW22a]{Sawin2022a}
Will Sawin and Melanie~Matchett Wood.
\newblock Finite quotients of 3-manifold groups.
\newblock {\em arXiv:2203.01140}, March 2022.

\bibitem[SW22b]{Sawin2022}
Will Sawin and Melanie~Matchett Wood.
\newblock The moment problem for random objects in a category.
\newblock {\em arXiv:2210.06279}, October 2022.

\bibitem[Woo18]{Wood2018}
Melanie~Matchett Wood.
\newblock Cohen-{{Lenstra}} heuristics and local conditions.
\newblock {\em Research in Number Theory}, 4(4):41, September 2018.

\bibitem[WW21]{Wang2021}
Weitong Wang and Melanie~Matchett Wood.
\newblock Moments and interpretations of the
  {{Cohen}}\textendash{{Lenstra}}\textendash{{Martinet}} heuristics.
\newblock {\em Commentarii Mathematici Helvetici}, 96(2):339--387, June 2021.

\end{thebibliography}
\end{document}